%% file: paper1.tex
\newcommand{\papertitle}{Conservative Three-Layer Schemes for Kirchhoff-Type
Equations}

\newcommand{\paperkeywords}{Conservative discretization; Kirchhoff-type equation;
nonlinear wave equation; three-layer scheme; energy conservation; stability and convergence}

\newcommand{\papermsc}{65M06; 65M12; 65M15; 35L70; 35L90}

\newcommand{\paperabstract}{%
We study a Kirchhoff-type nonlinear integro-differential equation in two spatial
dimensions whose coefficients are allowed to depend on time, and we construct
conservative discretizations in time for the associated initial--boundary value
problem. We consider two symmetric three-layer schemes of Crank--Nicolson type, a
locally linear one and a genuinely nonlinear one, and we show that each of them
preserves a discrete analogue of the total mechanical energy of the homogeneous
problem with constant coefficients. For the nonlinear scheme we establish uniform
\apriori{} bounds on the discrete solution and on its discrete velocity by working
directly with the discrete energies, without invoking a nonlinear discrete Gr\"onwall
inequality, the constants still grow exponentially in the final time, and we prove local second-order
convergence in time, both for the solution and for the central-difference
approximation of its first time derivative. The nonlinear system arising at each
time level is solved by a fixed-point iteration: given iterates at the two preceding
levels that satisfy the \apriori{} bounds, that equation has exactly one solution and
the iteration converges to it at a geometric rate once the time step is small enough.
Since the \apriori{} bounds are index-local, alternating them with that one-step solver
constructs the trajectory stepwise, for time-dependent coefficients as well, on the
local interval on which those bounds hold. Numerical experiments, carried out in a setting in which the spatial
discretization contributes no error, exhibit the conservation of the discrete
invariants, confirm the second order in time and verify the geometric convergence of the
fixed-point iteration.%
}

\newcommand{\paperdataavailability}{%
The data underlying every figure and table of \Cref{sec:numerics}, together with the sources of
all figures, are contained in the \texttt{figures/} directory of the accompanying repository,
\REPOSITORYURL. 
}

\newcommand{\papersubject}{Numerical analysis of conservative three-layer time discretizations for Kirchhoff-type integro-differential equations with time-dependent coefficients}

\input{common/preamble}

\begin{document}

\input{common/titlepage}

\input{paper1-body}

\input{common/backmatter}

\end{document}

%% file: common/preamble.tex
\documentclass[a4paper]{article} % A4 stock; 'article' is standard for papers/notes

\newif\ifsubmission  \submissiontrue
\newif\ifblind       \blindfalse

\usepackage[
a4paper,                          % A4 stock
margin=28mm,                      % Uniform text margins
marginparwidth=22mm,              % Wide enough for todonotes
includehead, includefoot          % Count header/footer in text area (optional)
]{geometry}

\usepackage{xcolor}                 % Colors for text, rules, links, etc.

\usepackage{multicol}               % \begin{multicols}{2} ... \end{multicols}

\usepackage{longtable,booktabs}     % Long tables + high-quality horizontal rules

\usepackage{graphicx}               % \includegraphics, \graphicspath
\graphicspath{{figures/}}           % (Optional) default path for figures

\usepackage[section]{placeins}
\usepackage{standalone}
\usepackage{tikz}
\usepackage{pgfplots}
\pgfplotsset{compat=1.18}

\usepackage{amsmath}                % Math environments (align, gather, etc.)
\usepackage{amssymb}                % Math symbols (\mathbb, \mathfrak, etc.)
\usepackage{amsthm}                 % \newtheorem environments
\usepackage{mathtools}              % (Optional) amsmath enhancements (\coloneqq, \DeclarePairedDelimiter, ...)
\usepackage{mathrsfs}

	\usepackage{silence}                % Selectively suppress warnings
	\usepackage[T1]{fontenc}                 % Proper glyphs & hyphenation in PDF
	\usepackage[utf8]{inputenc}              % UTF-8 source for pdfLaTeX
	\usepackage{lmodern}                     % Latin Modern fonts (vector)
	\usepackage[nopatch=footnote]{microtype} % Improves justification; no footnote patch (avoid conflicts)
	
	\usepackage{ragged2e}
	
	\usepackage[normalem]{ulem}         % \sout{}, but keep \emph behavior normal
	
	\usepackage{titlesec}               % Custom section formats
	\usepackage{setspace}               % \onehalfspacing, \doublespacing etc.
	\usepackage{indentfirst}            % Indent first paragraph of each section
	
	\usepackage[inline]{enumitem}
	
	\newcommand{\authorKrumbiegel}{Felix Krumbiegel}
    \newcommand{\authorRogava}{Jemal Rogava}
	\newcommand{\authorRupp}{Andreas Rupp}
	\newcommand{\authorVashakidze}{Zurab Vashakidze}
	
	\newcommand{\orcidKrumbiegel}{0009-0002-7288-4071}
    \newcommand{\orcidRogava}{0000-0001-9460-4283}
	\newcommand{\orcidRupp}{0000-0001-5527-7187}
	\newcommand{\orcidVashakidze}{0000-0001-8736-6213}
	
	\newcommand{\paperauthors}{\authorKrumbiegel{}; \authorRogava{}; \authorRupp{}; \authorVashakidze{}}

	\newif\ifrepositoryresolved  \repositoryresolvedtrue
	\ifrepositoryresolved
		\newcommand{\REPOSITORYURL}{\url{https://github.com/FelixKrumbiegel/Kirchhoff-conservative-FEM}}
	\else
		\ifblind
			\newcommand{\REPOSITORYURL}{\texttt{[repository URL withheld for double-blind review]}}
		\else
			\newcommand{\REPOSITORYURL}{\url{https://github.com/FelixKrumbiegel/Kirchhoff-conservative-FEM}}
		\fi
		\AtEndDocument{%
			\GenericWarning{}{%
				*** SUBMISSION BLOCKER: the Data and Code Availability statement still contains a
				placeholder. Set \string\repositoryresolvedtrue\space and fill in
				\string\REPOSITORYURL\space with the persistent archive DOI before submitting ***%
			}%
		}
	\fi
	
	\ifblind
		\newcommand{\runauthors}{\runningtitle}
	\else
		\newcommand{\runauthors}{\authorKrumbiegel{} \textbullet\ \authorRogava{} \textbullet\ \authorRupp{} \textbullet\ \authorVashakidze{}}
	\fi
	\newcommand{\runningtitle}{\papertitle} % Adjust to a shorter variant if needed
	
	\usepackage{fancyhdr}               % Custom header/footer management
	\AtBeginDocument{\thispagestyle{empty}} % Also set in titlepage below; redundancy is harmless
	
	\fancypagestyle{plain}{%
		\fancyhf{}
		\fancyhead[C]{%
			\ifodd\value{page}%
			\small \runauthors%
			\else%
			\small \runningtitle%
			\fi%
		}
		\fancyfoot[C]{\thepage}

	}
	
	\definecolor{linkColor}{RGB}{155,0,119} % Custom link color (accessible purple)
	\ifblind
		\newcommand{\pdfauthorfield}{}
	\else
		\newcommand{\pdfauthorfield}{\paperauthors}
	\fi
	\usepackage[
	pdfstartview  = {FitH},          % Open PDF with page width fitting window
	pdftitle      = {\papertitle},   % PDF metadata: title (from macro)
	pdfauthor     = {\pdfauthorfield}, % PDF metadata: author(s) (blank if \blindtrue)
	pdfkeywords   = {\paperkeywords},  % PDF metadata: keywords (was empty)
	pdfsubject    = {{\papersubject}},   % PDF metadata: subject (braced: the value may contain commas)
	colorlinks    = true,            % Colorize links instead of boxed links
	linkcolor     = linkColor,       % Section/cross-ref link color
	citecolor     = linkColor,       % Citation link color
	urlcolor      = linkColor,       % URL link color
	hypertexnames = false            % Avoid duplicate anchors (safer TOC/refs/cross-refs)
	]{hyperref}                         % Load hyperref once with options
	\usepackage{orcidlink} % Official ORCID iD mark, works with pdfLaTeX/XeLaTeX/LuaLaTeX
	
	\usepackage[nameinlink]{cleveref}   % Smart cross-references; auto “Fig.”, “Sec.”, etc.
	\crefname{equation}{eq.}{eqs.}      % "eq. (1)" / "eqs. (1) and (2)"
	\Crefname{equation}{Eq.}{Eqs.}      % "Eq. (1)" at sentence start
	\crefname{theorem}{theorem}{theorems}
	\Crefname{theorem}{Theorem}{Theorems}
	\crefname{figure}{figure}{figures}
	\Crefname{figure}{Figure}{Figures}
	\crefname{table}{table}{tables}
	\Crefname{table}{Table}{Tables}
	
	\theoremstyle{plain}
	
	\newtheorem{theorem}{Theorem}[section]      % Title: Theorem (numbered by section)
    \crefname{theorem}{Theorem}{Theorems}   
	
	\newtheorem*{theorem*}{Theorem}             % Title: Theorem* (unnumbered)
	
	\newtheorem{lemma}[theorem]{Lemma}          % Title: Lemma (shares counter)
	
	\newtheorem*{lemma*}{Lemma}                 % Title: Lemma* (unnumbered)
	
	\newtheorem{corollary}[theorem]{Corollary}  % Title: Corollary (shares counter)
    \crefname{corollary}{Corollary}{Corollaries}   
	
	\newtheorem*{corollary*}{Corollary}         % Title: Corollary* (unnumbered)
	
	\theoremstyle{definition}
	
	\newtheorem*{definition*}{Definition}        % Title: Definition* (unnumbered)
	
	\newtheoremstyle{boldremark} % Name
	{3pt}                      % Space above: vertical skip before the environment
	{3pt}                      % Space below: vertical skip after the environment
	{\normalfont}              % Body font: upright/roman text (not italic)
	{}                         % Indent amount: empty = no indentation
	{\bfseries}                % Head font: bold heading (e.g., "Remark 1.2.")
	{.}                        % Punctuation after head: prints a period after heading
	{.5em}                     % Space after head: spacing between head and body text
	{}                         % Head specification: empty = default layout
	
	\theoremstyle{boldremark}
	
	\newtheorem{remark}[theorem]{Remark}        % Title: Remark
	
	\newtheorem*{remark*}{Remark}               % Title: Remark* (unnumbered)
	
	\theoremstyle{definition}

	\theoremstyle{plain}
	
	\theoremstyle{definition}
	
	\newcommand{\refitem}[2]{%
		\hyperref[#2]{#1~\ref*{#2}}% \ref* prints the bare number; \hyperref makes it a single link
	}
	
	\newcommand{\figsubref}[2]{%
		\hyperref[#2]{Figure~\ref*{#1}(\subref*{#2})}% one clickable "Figure <main>(<sub>)"
	}
	
	\newcommand{\lemref}[1]{%
		\refitem{Lemma}{#1}%
	}
	
	\newcommand{\thmref}[1]{%
		\refitem{Theorem}{#1}%
	}
	
	\newcommand{\corref}[1]{%
		\refitem{Corollary}{#1}%
	}

	\makeatletter
	\renewenvironment{proof}[1][\proofname]{%    % Title: proof environment (redefinition)
		\par\pushQED{\qed}\normalfont%           % Push end-of-proof symbol; normal font
		\topsep6\p@\@plus6\p@\relax%             % Vertical spacing before proof
		\trivlist\item[\hskip\labelsep\bfseries#1\@addpunct{.}]% % Bold "Proof." label with punctuation
		\ignorespaces%                           % Ignore spaces after the label
	}{%
		\popQED\endtrivlist\@endpefalse%         % Pop QED symbol; end list; reset end-paragraph flag
	}
	\makeatother
	
	\newcommand{\R}{\mathbb{R}}         % Example: real numbers symbol
    \newcommand{\domain}{\Omega}

	\long\def\dd{\mathrm{d}}           % Upright differential 'd' for dx, dy, etc.
	\long\def\bigO{\mathcal{O}}        % Big-O notation
	\long\def\I{\mathcal{I}}           % Calligraphic I
	\long\def\L{\mathcal{L}}           % Calligraphic L (shadows LaTeX's Polish L; see \Lslash above)
	\long\def\T{\mathcal{T}}           % Calligraphic T
	\long\def\ie{\textit{i.e.}}        % Latinism: id est
	\long\def\eg{\textit{e.g.}}        % Latinism: exempli gratia
	\long\def\cf{\textit{cf.}}         % Latinism: confer
	\long\def\apriori{\textit{a priori}} % Latinism: a priori
	\newcommand{\dint}[2][\Omega]{%    % Title: \dint (double integral macro)
		\iint\limits_{#1}%             % Place domain under the integral in display math
		#2%                            % Integrand (function under the integral sign)
		\,\mathrm{d}x%                 % Thin space + upright differential 'd' for x
		\,\mathrm{d}y%                 % Thin space + upright differential 'd' for y
	}
	
	\input{constants_enumeration}
	
	\titleformat{\section}{\large\bfseries}{\thesection}{1em}{}            % Title: \section format (bold, larger)
	\titleformat{\subsection}{\normalsize\bfseries}{\thesubsection}{1em}{} % Title: \subsection format (bold)
	
	\titlespacing*{\section}{0pt}{*2}{*0.6}
	\titlespacing*{\subsection}{0pt}{*1.3}{*0.4}
	
	\usepackage[%
                doi=false,
                backend=biber,
                style=numeric,
                ]{biblatex} % Use biblatex with biber; style = alphabetic (e.g. [KnA21])
	\appto\biburlsetup{\Urlmuskip=0mu plus 1mu\relax}
	\numberwithin{equation}{section} % Equations: (section.eqnum)
	\numberwithin{figure}{section}   % Figures: (section.fignum)
	\numberwithin{table}{section}    % Tables: (section.tabnum)
	
	\usepackage[most]{tcolorbox} % Load tcolorbox with common libraries (skins, breakable, theorems)
	\tcbset{%
		myabstract/.style={%
			colback=gray!5!white,    % Light background
			colframe=black!40,       % Frame color
			fonttitle=\bfseries,     % Bold title
			coltitle=black,          % Title text color
			boxrule=0.6pt,           % Border thickness
			arc=2mm,                 % Rounded corners
			enhanced,                % Enable advanced options
			before skip=10pt,        % Space before the box
			after skip=20pt,         % Space after the box
			breakable                % Allow page breaks if the abstract is long
		}%
	}
	\renewenvironment{abstract}%      % Title: abstract environment (redefinition)
	{\begin{tcolorbox}[myabstract,title=Abstract]}%  % Begin styled abstract box
		{\end{tcolorbox}}%                           % End styled abstract box
	
	\title{\papertitle}                 % For PDF metadata/bookmarks
	\author{\runauthors}                % For PDF metadata/bookmarks
	\date{}                             % Empty date suppresses date printing under the title
	
	\newcommand{\titleseparator}{\par\noindent{\color{black}\rule{\linewidth}{0.8pt}}\par}
	
	\allowdisplaybreaks

	\AtBeginDocument{%
		\setlength{\abovedisplayskip}{5pt plus 2pt minus 2pt}%
		\setlength{\belowdisplayskip}{5pt plus 2pt minus 2pt}%
		\setlength{\abovedisplayshortskip}{2pt plus 1pt minus 1pt}%
		\setlength{\belowdisplayshortskip}{3pt plus 1pt minus 1pt}}
	\makeatletter\def\thm@space@setup{\thm@preskip=4pt \thm@postskip=4pt}\makeatother  % Allow multi-line displayed equations to break across pages

%% file: constants_enumeration.tex
\newcounter{constant}

\newcounter{mconst}

\newcommand{\cst}[1]{%
  \refstepcounter{constant}% Increase 'constant' and make it referenceable
  \expandafter\xdef\csname #1\endcsname{c_{\arabic{constant}}}%
}

\newcommand{\mcst}[1]{%
  \refstepcounter{mconst}% Increase 'mconst' and make it referenceable
  \expandafter\xdef\csname #1\endcsname{M_{\arabic{mconst}}}%
}

\mcst{cstmone}    % Defines \cstmone   -> M_1  (used in Lemma 1)
\mcst{cstmtwo}    % Defines \cstmtwo   -> M_2  (used in Lemma 1)
\mcst{cstmthree}  % Defines \cstmthree -> M_3  (used in Lemma 2)
\mcst{cstmfour}   % Defines \cstmfour  -> M_4  (used in Lemma 2)

\mcst{cstmfive}   % Defines \cstmfive  -> M_5  (used in the Iteration Theorem)
\mcst{cstmseven}  % Defines \cstmseven -> M_6  (used in the Iteration Theorem; the macro name
\mcst{cstmseight}  % Defines \cstmseight -> M_7  (the macro name carries a stray "s" and the
\mcst{cstmsix}    % Defines \cstmsix -> the NEXT free M index.  Used only by main.tex;
\cst{lmonecstalph}   % Defines a c-type constant associated with alpha in Lemma 1
\cst{lmonecstbeta}   % Defines a c-type constant associated with beta in Lemma 1
\cst{lmonecstdiff}   % Defines a c-type constant for difference estimates in Lemma 1

\cst{lmtwocstdiffbw}             % Defines a c-type constant for differences in Lemma 2
\cst{lmtwocsthatenerg}           % Defines a c-type constant for the hat-energy term in Lemma 2
\cst{lmtwocstfracone}            % Defines a c-type constant for the first fractional bound in Lemma 2
\cst{lmtwocstfractwo}            % Defines a c-type constant for the second fractional bound in Lemma 2
\cst{lmtwocstmaxf}               % Defines a c-type constant controlling a maximum function in Lemma 2
\cst{lmtwocstnearestexp}         % Defines a c-type constant for nearest-exponent estimates in Lemma 2
\cst{lmtwocstnearestfinineq}     % Defines a c-type constant for final inequality bounds in Lemma 2

\cst{cststatementone}            % Defines a c-type constant appearing in the statement of Theorem 1
\cst{cststatementtwo}            % Defines a second c-type constant appearing in the statement of Theorem 1
\cst{thmcstdiffbetaw}            % Defines a constant in the difference of beta times double-u in Theorem 1
\cst{thmcsthalflnorm}            % Defines a constant in the estimate of norm of L^{1/2} applied to solution u in Theorem 1
\cst{thmcstlnorm}                % Defines a constant in the estimate of the norm of L applied to the solution u in Theorem 1
\cst{thmcstinnerprodgk}          % Defines a constant in the upper bound of the absolute value of the inner product associated with g_k in Theorem 1
\cst{thmcstinnerprodrem}         % Defines a constant in the upper bound of the absolute value of the inner product associated with the remainder term in Theorem 1
\cst{thmcstsumoverczero}         % Defines a constant in the upper bound in E_{k + 1} as an fraction of some constants
\cst{thmcstmaxtwoconst}          % Defines a constant in the upper bound in E_{k + 1} as an maximum between two constants
\cst{thmcstesttone}              % Defines a constant in the upper bound in \tilde{T}_1
\cst{thmcstestwone}              % Defines a constant in the upper bound in \tilde{W}_1
\cst{thmtildevareps}             % Defines a constant in the upper bound in \tilde{\varepsilon}
\cst{thmtildeEkplusone}          % Defines a constant in the upper bound in \sqrt{\tilde{E}_{k + 1}}

\cst{rmkthmcstkinetic}           % Defines a constant in the upper bound in | T \left( t_k \right) - T_k^{\ast} |

\cst{thmcstesttildc}             % upper bound for \tilde{\varepsilon} (main.tex only)
\cst{thmcstsqrtekplusone}        % upper bound for \sqrt{\tilde{E}_{k+1}} (main.tex only)

\cst{cstczero}    % generic c-type constant, alpha lower bound (legacy)
\cst{cstcone}     % generic c-type constant, beta lower bound (legacy)

%% file: common/titlepage.tex
% Shared title page. Uses \papertitle, \paperabstract, \paperkeywords, \papermsc.
		
		\sloppy  % Relax line-breaking rules globally (looser spacing, fewer overfull boxes)
		
		% ======================================================================
        %  CUSTOM TITLE PAGE
        %  Title: Manual title-page layout
        %  Scope:
        %    - Title centered between horizontal rules
        %    - Four authors arranged in a 2 x 2 grid
        %    - Abstract placed near the lower part of the page
        %    - Keywords placed below the abstract
        %  Notes:
        %    - This layout assumes that commands such as \papertitle,
        %      \titleseparator, and all ORCID macros are defined elsewhere.
        %    - Hyperlinks are assumed to use the document's configured link color.
        % ======================================================================
        \begingroup\thispagestyle{empty}
        	\thispagestyle{empty} % Remove headers and footers from the title page
        	
        	\begin{center}
        		\medskip
% Flexible vertical space above the title block for visual balance
        		
        		% ------------------------------------------------------------------
        		%  TITLE BLOCK
        		%  - The title is visually framed by two separator rules.
        		%  - \titleseparator is assumed to be a custom rule macro defined
        		%    elsewhere in the document preamble.
        		% ------------------------------------------------------------------
        		\titleseparator % Upper title rule
        		 % Space between upper rule and paper title
        		
        		{\LARGE\bfseries \papertitle\par} % Paper title in large bold font
        		
        		 % Space between paper title and lower rule
        		\titleseparator % Lower title rule
        		
        		\medskip
% Flexible vertical space between title and author area
        		
        		% ------------------------------------------------------------------
        		%  AUTHOR BLOCKS
        		%  Layout:
        		%    - First row:  Author 1 | Author 2
        		%    - Second row: Author 3 | Author 4
        		%
        		%  Technical notes:
        		%    - tabular is used for horizontal placement.
        		%    - [t] top-aligns each row consistently.
        		%    - minipage[t] ensures each author block starts at the same top line.
        		%    -  inside minipage stabilizes top alignment.
        		%    - The whole block is suppressed by \blindtrue for a
        		%      double-blind submission.
        		% ------------------------------------------------------------------

        		\ifblind
        		{\large\textit{Author names and affiliations removed for double-blind review.}\par}
        		\else
        		% =========================
        		%  First row of authors
        		% =========================
        		\begin{tabular}[t]{cc}
        			
        			% --- Author 1: left column ---
        			\begin{minipage}[t]{0.44\textwidth}
        				 % Force true top alignment within the tabular cell
        				\centering % Center all content inside this author block

        				{\large\textbf{\authorKrumbiegel{}}~\orcidlink{\orcidKrumbiegel}\par} % Author name + ORCID
        				 % Space between author name and affiliation

        				{\small
        					Department of Mathematics, Saarland University,\\
        					Campus E1.1, 66123 Saarbrücken, Germany\par
        				} % Affiliation and postal address

        				 % Space between affiliation and contact line

        				{\small
        					\href{mailto:felix.krumbiegel@uni-saarland.de}{\texttt{felix.krumbiegel@uni-saarland.de}}\par
        				} % Email address
        			\end{minipage}
        			&
        			
        			% --- Author 2: right column ---
        			\begin{minipage}[t]{0.44\textwidth}
        				 % Force true top alignment within the tabular cell
        				\centering % Center all content inside this author block

        				{\large\textbf{\authorRogava{}}~\orcidlink{\orcidRogava}\par} % Author name + ORCID
        				 % Space between author name and affiliation

        				{\small
        					Ivane Javakhishvili Tbilisi State University, Ilia Vekua Institute of Applied Mathematics,\\
        					11 University Street, 0186, Tbilisi, Georgia\par
        				} % Affiliation and postal address split across lines for readability

        				 % Space between affiliation and contact line

        				{\small
        					\href{mailto:jemal.rogava@tsu.ge}{\texttt{jemal.rogava@tsu.ge}}\par
        				} % Email address
        			\end{minipage}
        			
        		\end{tabular}
        		
        		 % Fixed vertical separation between the two author rows
        		
        		% ==========================
        		%  Second row of authors
        		% ==========================
        		\begin{tabular}[t]{cc}
        			
        			% --- Author 3: left column ---
        			\begin{minipage}[t]{0.44\textwidth}
        				 % Force true top alignment within the tabular cell
        				\centering % Center all content inside this author block

        				{\large\textbf{\authorRupp{}}~\orcidlink{\orcidRupp}\par} % Author name + ORCID
        				 % Space between author name and affiliation

        				{\small
        					Department of Mathematics, Saarland University,\\
        					Campus E1.1, 66123 Saarbrücken, Germany\par
        				} % Affiliation and postal address

        				 % Space between affiliation and contact line

        				{\small
        					\href{https://rupp.ink}{\texttt{rupp.ink}} \textbar\ %
        					\href{mailto:andreas.rupp@uni-saarland.de}{\texttt{andreas.rupp@uni-saarland.de}}\par
        				} % Website and email address
        			\end{minipage}
        			&
        			
        			% --- Author 4: right column ---
        			\begin{minipage}[t]{0.44\textwidth}
        				 % Force true top alignment within the tabular cell
        				\centering % Center all content inside this author block

        				{\large\textbf{\authorVashakidze{}}~\orcidlink{\orcidVashakidze}\par} % Author name + ORCID
        				 % Space between author name and affiliation

        				{\small
        					School of Science and Technology, The University of Georgia (UG),\\
        					77 Merab Kostava Street, 0171, Tbilisi, Georgia;\\[0.4em]
        					Ilia Vekua Institute of Applied Mathematics,\\
        					11 University Street, 0186, Tbilisi, Georgia\par
        				} % Two affiliations, separated visually for clarity

        				 % Space between affiliation and contact line

        				{\small
        					\href{mailto:z.vashakidze@ug.edu.ge}{\texttt{z.vashakidze@ug.edu.ge}} \textbar\ %
        					\href{mailto:zurab.vashakidze@tsu.ge}{\texttt{zurab.vashakidze@tsu.ge}}\par
        				} % Two email addresses
        			\end{minipage}
        		\end{tabular}
        		
        		\fi % end \ifblind (author blocks)

        		\medskip
% Flexible vertical space to push the abstract toward the lower page area
        		
        		% ------------------------------------------------------------------
        		%  ABSTRACT + KEYWORDS BLOCK
        		%  - minipage narrows the text width for a more polished appearance.
        		%  - The abstract environment is assumed to be styled elsewhere.
        		% ------------------------------------------------------------------
        		\begin{minipage}{0.9\textwidth}
        			
        			\begin{abstract}
				\paperabstract
        				
        			\end{abstract}
        			% Abstract text

        			\medskip
% Flexible separation between abstract and keywords

        			% --- Keywords line ---
        			% - \noindent suppresses paragraph indentation.
        			% - \textbf emphasizes the keyword label.
        			\noindent\textbf{Keywords:} \paperkeywords.

        			\noindent\textbf{MSC 2020:} \papermsc.

        		\end{minipage}
        		
        	\end{center}
        \endgroup
		
		% ======================================================================
		%  METHOD: New Page
		%  Title: Hard Page Break
		%  Role: Start main content on a fresh page; stronger than \pagebreak.
		% ======================================================================
		\newpage

%% file: paper1-body.tex
		\section{Introduction}

        \subsection{Problem Formulation}\label{sec:problem_formulation}

        Kirchhoff-type equations constitute an important class of nonlocal partial differential equations arising from the classical model proposed by Kirchhoff as an extension and refinement of the linear theory of vibrating strings; see \cite{kirchhoff1876vorlesungen}. In this model, the tension is allowed to vary according to the total stretching induced by the displacement. In contrast to standard quasilinear hyperbolic equations, the principal part of a Kirchhoff problem depends on a global quantity involving the solution itself, typically its Dirichlet energy. This modification leads to a nonlinear and nonlocal evolution equation and serves as a prototype for the study of wave propagation in elastic media with displacement-dependent mechanical properties.
		
		Consider the nonlinear Kirchhoff-type integro-differential equation in two spatial dimensions
		\begin{equation}\label{eq:main_kirchhoff}
			\frac{\partial^2 u}{\partial t^2} - \Bigl( \alpha(t)  + \beta(t)  \dint{\left| \nabla u \right|^2} \Bigr) \Delta u = f\,,
		\end{equation}
		posed in $\left[ 0,T \right]$ and a bounded convex domain $\domain \subset \R^2$, which in particular has a Lipschitz boundary, together with the initial and boundary conditions
		\begin{equation}\label{eq:initial_conds}
			u ( 0 ) = u_0\,,\qquad \partial_t u ( 0 ) = v_0\,,\qquad u ( t ) |_{\partial \domain} = 0 \quad \text{for } t \in \left[ 0,T \right]\,.
		\end{equation}
		Here $u$ is the transverse displacement, $\left| \, \cdot\, \right|$ the Euclidean norm and $\nabla$ the gradient. Throughout both this paper and the companion paper~\cite{KirchhoffFEM}, $u$ and $f$ are regarded as functions of $t$ with values in a space of functions on $\domain$, so that the spatial argument is never displayed: $u ( t )$ is the state at time $t$, $u ( t_k )$ the exact solution at the $k$-th time level~$t_k$, and $u_k$ its approximation. The coefficients are scalar functions of $t$ and do carry that argument: $\alpha$ and $\beta$ are continuous and continuously differentiable on $\left[ 0,T \right]$, and we write $\alpha_{\min} = \min_{[0,T]} \alpha ( t )$, $\alpha_{\max} = \max_{[0,T]} \alpha ( t )$, $\beta_{\min} = \min_{[0,T]} \beta ( t )$ and $\beta_{\max} = \max_{[0,T]} \beta ( t )$, assuming $\alpha_{\min} > 0$ and $\beta_{\min} > 0$. The source term is taken in $f \in C \bigl( \left[ 0,T \right] ; L^2 ( \domain ) \bigr)$ throughout, so that the grid values $f_k = f ( t_k )$ that the schemes sample are defined and $\max_k \left\| f_k \right\|$ is bounded. Continuity, and not merely essential boundedness, is what makes the point values meaningful. Individual results impose more regularity on $f$ where they need it, in \eqref{eq:lemma2_data_regularity} and \thmref{prop:conver_thm_nonlinear}.

        The nonlocal character of \eqref{eq:main_kirchhoff} is its essential feature: unlike the classical wave equation, the coefficient multiplying the Laplacian is not prescribed independently of the solution but depends on a global quantity, the spatial deformation energy at time $t$, so that the evolution at each point is coupled to the behaviour of the solution throughout $\domain$. Mechanically, this is the variation of the effective tension induced by deformation: in two dimensions \eqref{eq:main_kirchhoff} models the transverse vibrations of an elastic membrane whose effective tension depends on its total deformation energy, with $\alpha $ the baseline tension and $\beta $ the scale of the nonlinear contribution. For time-independent parameters, the model arises in contemporary work on nanomechanical oscillators \cite{Yao2022}; allowing the parameters to depend on time reflects changes in the material properties during vibration, as caused at those scales by thermal fluctuations or strong external driving \cite{Samanta2023,Sun2006}, and so covers membranes subjected to varying temperature, pressure or humidity.

		\subsection{Related work and contribution}
		Numerical methods for Kirchhoff-type equations have a long history. Christie and Sanz-Serna \cite{Christie1984} reduced a one-dimensional homogeneous problem with constant coefficients to a first-order system and combined a finite element method in space with a Crank--Nicolson scheme in time. {In \cite{Peradze2005}, Peradze employed and analyzed the time-discretization scheme proposed by Christie and Sanz-Serna for nonlinear homogeneous Kirchhoff equations with constant coefficients, combined with a Galerkin discretization in the spatial variable, with sine functions chosen as both trial and test functions.} Symmetric three-layer semi-discrete schemes for abstract analogues of the Kirchhoff equation were introduced and analysed by Rogava and Tsiklauri \cite{RogTsikl2012,RogTsikl2014}, who established local convergence, and extended to Ball-type integro-differential equations in \cite{RogTsikVash2023}. Convergence for the nonlinear dynamic Kirchhoff string equation was obtained in \cite{VashGMJ2022}, {while a discretization based on Legendre polynomials was proposed in \cite{Vash2020}}. Closest to the present work is \cite{RogVashZAMM2024}, where a three-layer semi-discrete scheme for the one-dimensional string equation with \emph{time-dependent} coefficients is shown to converge.

		The present paper is a study of the \emph{temporal} discretization, and its
		contributions are the following. We consider two symmetric
		three-layer schemes, a locally linear and a genuinely nonlinear one, and show that each
		preserves a discrete analogue of the total mechanical energy of the homogeneous problem
		with constant coefficients, both identically rather
		than to within a discretization error. We further show that a third quantity, built from the potential energy and a centred difference approximation of
		the kinetic energy, converges with refinement of the temporal grid to the mechanical energy with second order. These
		identities are new for the two-dimensional problem and are what the word
		\emph{conservative} refers to throughout.

		For the nonlinear scheme, we obtain uniform bounds on the strain of the discrete displacement~$u_k$ and on the discrete velocity $(u_k - u_{k-1})/\tau$, where~$\tau$ denotes the time step size, without recourse to a discrete Gr\"onwall inequality. Further, we show local uniform bounds for higher spatial derivatives of the aforementioned quantities. The bounds are obtained by working
		directly with the discrete energies, which gives more explicit control of the quantities
		involved; however, we do not claim that this removes the exponential dependence on the final time,
		which the constants of \lemref{prop:lemma1_nonlinear} and \lemref{prop:lemma2_nonlinear}
		still carry. Furthermore, we establish the local second-order
		convergence in time both for the solution and for the central-difference
		approximation of its first time derivative.

		Finally, we analyse the
		nonlinear equation solved at each time step, and show that the associated iteration
		converges at a geometric rate once $\tau$ is small enough. This settles the one-step
		problem under the bounds established beforehand. Because the \apriori{} bounds of
		\Cref{sec:nonlinear_scheme} are index-local, alternating them with that solver constructs the
		trajectory stepwise, for time-dependent $\alpha$ and $\beta$ as well, on the local
		interval those bounds cover, see \Cref{rem:one_step_only}.

        {
        Furthermore, formulating the problem in terms of a self-adjoint, positive-definite extension of the negative Laplacian broadens the scope of the proposed time-stepping scheme. In particular, the analysis applies not only to conventional settings but also to problems posed in multiple spatial dimensions on Lipschitz domains. In addition, the discussion extends to various boundary conditions, provided that the negative Laplacian remains positive definite under the corresponding conditions. Thus, with the appropriate functional-analytic setting, the framework applies to Dirichlet, Robin, mixed, and, subject to the necessary restriction, Neumann boundary value problems.
        }

		The paper is organised as follows. \Cref{sec:semidiscrete_schemes} introduces the two three-layer schemes, \Cref{sec:energy_conservation} derives the
		energy conservation property of the homogeneous continuous problem with constant
		coefficients, and establishes the discrete energies for both
		schemes. \Cref{sec:nonlinear_scheme} contains the analysis of the nonlinear scheme:
		\apriori{} bounds, the error equation, and local second-order convergence in time.
		\Cref{sec:solvability} treats the nonlinear problem solved at each time level,
		proving existence, uniqueness, and geometric convergence of the associated iteration. \Cref{sec:numerics} reports numerical experiments and
		\Cref{sec:conclusion} concludes. The companion paper~\cite{KirchhoffFEM} extends the
		analysis of the present work to a fully discrete method.

		\section{Symmetric Three-Layer Semi-Discrete Schemes}\label{sec:semidiscrete_schemes}
		
		\noindent Consider a uniform partition of the temporal domain $\left[ 0,T \right]$ with mesh size $\tau = T / n$, where $n > 1$
		denotes the number of subintervals. The associated grid points are given by $t_k = k \tau$ for $k = 0,\ldots,n$. %
		An approximate solution to the problem \eqref{eq:main_kirchhoff}--\eqref{eq:initial_conds} is sought through the following semi-discrete scheme ({\cf} \cite{RogTsikl2012,RogVashZAMM2024})
		\begin{equation}\label{eq:linear_semidiscrete}
			\frac{\hat{\Delta}^2 u_{k - 1}}{\tau^2} - \tfrac{1}{2} q_k \left( \Delta u_{k + 1} + \Delta u_{k - 1} \right) = f_k\,,\quad k = 1,2,\ldots,n - 1\,,
		\end{equation}
		where $f_k = f ( t_k )$ is the evaluation at the grid points, $\hat{\Delta} u_{k - 1} = u_k - u_{k - 1}, \hat{\Delta}^2 u_{k - 1} = \hat{\Delta} ( \hat{\Delta} u_{k - 1} )$ are the first and second discrete differences, and the nonlinearity evaluated at the time step $t_k$ is given by 
		\begin{alignat*}{2}
			q_k &= \alpha_k + \beta_k \dint{\left| \nabla u_k \right|^2}\,, &\qquad \alpha_k &= \alpha( t_k )\,,\quad \beta_k = \beta( t_k )\,.
		\end{alignat*}
		Then, $u_k$ as an approximation of $u ( t_k )$ corresponding to the $k$-th time level. %
		To proceed with finding an approximate solution using the proposed scheme \eqref{eq:linear_semidiscrete}, one first specifies the initial functions $u_0$ and $u_1$, which are given as 
		\begin{equation}\label{eq:starting_vec1}
			u_0 = u( 0 )\,,\qquad u_1 = u_0 + \tau v_0 + \frac{\tau^2}{2} w_0\,,\qquad w_0 = f_0 + q_0 \Delta u_0\,.
		\end{equation}
		Using arguments similar to those developed in \cite{RogTsikl2014,RogVashZAMM2024}, one concludes that the scheme \eqref{eq:linear_semidiscrete} attains second-order accuracy in time; that is, it has an approximation order of $\bigO ( \tau^2 )$. This scheme may be referred to as a Crank–Nicolson-type locally linear scheme, since a linear problem is solved at each temporal layer.
		It should be noted that the following semi-discrete scheme likewise possesses an approximation order of $\bigO ( \tau^2 )$ and takes the form
		\begin{equation}\label{eq:nonlinear_semidiscrete}
			\frac{\hat{\Delta}^2 u_{k - 1}}{\tau^2} - \tfrac{1}{2}\Bigl( \alpha_k + \frac{\beta_k}{2} \dint{\bigl( \left| \nabla u_{k + 1} \right|^2 + \left| \nabla u_{k - 1} \right|^2 \bigr)} \Bigr) \left( \Delta u_{k + 1} + \Delta u_{k - 1} \right) = f_k\,.
		\end{equation}
		For the scheme given by \eqref{eq:nonlinear_semidiscrete}, the initial data are the same as in the preceding scheme \eqref{eq:linear_semidiscrete}. In contrast to \eqref{eq:linear_semidiscrete}, the scheme \eqref{eq:nonlinear_semidiscrete} may be referred to as a Crank–Nicolson-type nonlinear scheme.

        \section{Energy Conservation for the Homogeneous Kirchhoff Equation with Constant Coefficients}\label{sec:energy_conservation}
        
		\noindent Let us consider the homogeneous Kirchhoff equation \eqref{eq:main_kirchhoff} with constant coefficients $\alpha ,\beta \in \mathbb{R}_+$, subject to the same initial–boundary conditions \eqref{eq:initial_conds}: 
		\begin{equation}\label{eq:homog_kirchhoff}
			\frac{\partial^2 u}{\partial t^2} - \Bigl( \alpha + \beta \dint{\left| \nabla u \right|^2} \Bigr) \Delta u = 0\,.
		\end{equation}
		Multiplying \eqref{eq:homog_kirchhoff} by $\partial u / \partial t$, integrating over $\domain$, applying Green's formula and using $\left[ \partial u / \partial t \right]_{\partial \domain} = 0$, all under the assumption that the functions involved are sufficiently regular, gives
		\begin{equation}\label{eq:diff_energy}
			\frac{\dd \mathcal{K} }{\dd t} + \alpha \frac{\dd V }{\dd t} + \beta \frac{\dd V^2 }{\dd t} = 0\,,\quad \text{where}\quad
			\mathcal{K} ( t ) = \tfrac{1}{2} \dint{\Bigl( \frac{\partial u( t )}{\partial t} \Bigr)^2}\,,\quad V ( t ) = \tfrac{1}{2} \dint{\left| \nabla u ( t ) \right|^2}
		\end{equation}
		are the kinetic and the potential (strain) energy functionals. Integrating \eqref{eq:diff_energy} from $0$ to $t$ and setting
		\begin{equation}\label{eq:total_energy}
			E ( t ) = \mathcal{K} ( t ) + \alpha V ( t ) + \beta V^2 ( t )
		\end{equation}
		yields
		\begin{equation*}
			E ( t ) = E ( 0 ) = \tfrac{1}{2} \dint{v_0^2} + \frac{\alpha}{2} \dint{\left| \nabla u_0 \right|^2} + \frac{\beta}{4} \Bigl( \dint{\left| \nabla u_0 \right|^2} \Bigr)^2\,.
		\end{equation*}
		Energy conservation therefore remains valid for the present problem, although the potential energy now carries a nonlinear contribution.

		\subsection{An Analogue of the Energy Conservation Property for Semi-Discrete Numerical Schemes}\label{subsec:energy_discrete}

		Let us consider the scheme \eqref{eq:linear_semidiscrete} for equation \eqref{eq:homog_kirchhoff}:
		\begin{equation}\label{eq:linear_semidiscrete_homog}
			\frac{\hat{\Delta}^2 u_{k - 1}}{\tau^2} - \tfrac{1}{2} q_k \left( \Delta u_{k + 1} + \Delta u_{k - 1} \right) = 0\,.
		\end{equation}
		Multiplying \eqref{eq:linear_semidiscrete_homog} by $u_{k + 1} - u_{k - 1} = \hat{\Delta} u_k + \hat{\Delta} u_{k - 1}$, integrating over $\domain$, expanding the brackets in the first integral, and applying Green's formula to the second gives, with the discrete counterparts
		\begin{equation}\label{eq:notation_discrete}
			\mathcal{K}_k = \tfrac{1}{2} \dint{\Bigl( \frac{\hat{\Delta} u_{k - 1}}{\tau} \Bigr)^2}\,,\quad V_k = \tfrac{1}{2} \dint{\left| \nabla u_k \right|^2}\,,
		\end{equation}
		the relation $2 \mathcal{K}_{k + 1} + ( \alpha + 2 \beta V_k ) V_{k + 1} = 2 \mathcal{K}_{k} + ( \alpha + 2 \beta V_k ) V_{k - 1}$. Writing the right-hand side as $2 \mathcal{K}_{k} + ( \alpha + 2 \beta V_{k - 1} ) V_k + \alpha ( V_{k - 1} - V_k )$ makes the first two terms the same expression one level down, and the residue $\alpha ( V_{k-1} - V_k )$ telescopes. Summing from $1$ to $k$ and dividing by 2 therefore leaves
		\begin{equation*}
			\mathcal{K}_{k + 1} + \tfrac{\alpha}{2} \left( V_k + V_{k + 1} \right) + \beta V_k V_{k + 1} = \mathcal{K}_{1} + \tfrac{\alpha}{2} ( V_0 + V_1 ) + \beta V_0 V_1\,.
		\end{equation*}
		With $\widetilde{E}_{1,k} = \mathcal{K}_{k} + \tfrac{\alpha}{2} \left( V_{k - 1} + V_{k} \right) + \beta V_{k - 1} V_k$ this reads $\widetilde{E}_{1,k + 1} = \widetilde{E}_{1,1}$ for $k = 1,2,\ldots,n - 1$. The locally linear symmetric three-layer semi-discrete scheme is energy-preserving (conservative) in a discrete sense. 

		Applying the scheme \eqref{eq:nonlinear_semidiscrete} to equation \eqref{eq:homog_kirchhoff} yields
		\begin{align}\label{eq:nonlinear_semidiscrete_homog}
			&\frac{\hat{\Delta}^2 u_{k - 1}}{\tau^2} - \tfrac{1}{2}\Bigl( \alpha + \tfrac{\beta}{2} \dint{\left( \left| \nabla u_{k + 1} \right|^2 + \left| \nabla u_{k - 1} \right|^2 \right)} \Bigr) \left( \Delta u_{k + 1} + \Delta u_{k - 1} \right) = 0\,.
		\end{align}
		The same test function and the same manipulation, now with $\alpha V_k / 2$ and $\beta V_k^2 / 2$ added to both sides, give
		\begin{equation*}
			\mathcal{K}_{k + 1} + \tfrac{\alpha}{2} \left( V_k + V_{k + 1} \right) + \tfrac{\beta}{2} \left( V_k^2 + V_{k + 1}^2 \right) = \mathcal{K}_k + \tfrac{\alpha}{2} \left( V_{k - 1} + V_k \right) + \tfrac{\beta}{2} \left( V_{k - 1}^2 + V_k^2 \right)\,,
		\end{equation*}
		so that, with $\widetilde{E}_{2,k} = \mathcal{K}_k + \tfrac{\alpha}{2} \left( V_{k - 1} + V_k \right) + \tfrac{\beta}{2} \left( V_{k - 1}^2 + V_k^2 \right)$, we obtain $\widetilde{E}_{2,k + 1} = \widetilde{E}_{2,k} = \widetilde{E}_{2,1}$ for $k = 1,2,\ldots,n - 1$. The quantity $\widetilde{E}_{2,k}$ is thus an energy invariant of the scheme \eqref{eq:nonlinear_semidiscrete_homog}, replicating the conservation property of the continuous problem \eqref{eq:homog_kirchhoff}.

		\section{Investigation of the Crank–Nicolson-type Nonlinear Scheme}\label{sec:nonlinear_scheme}
		
		The lemmas presented in this section play a principal role in establishing the main convergence theorem as well as the convergence of the iteration process associated with the Crank–Nicolson-type nonlinear scheme. %
		Let us define the notation:
		\begin{equation}\label{eq:laplacian}
			\L_0 = - \Delta\,,\quad D ( \L_0 ) = \left\{ u \in C^2 \left( \overline{\domain} \right) : \left.u\right|_{\partial\domain} = 0 \right\}\,.
		\end{equation}
		Leveraging the notation introduced in \eqref{eq:laplacian} and invoking Green's formula, we may rewrite the nonlinear scheme \eqref{eq:nonlinear_semidiscrete} in the form
		\begin{equation}\label{eq:nonlinear_scheme_laplacian}
			\frac{\hat{\Delta}^2 u_{k - 1}}{\tau^2} + \tfrac{1}{2}\Bigl( \alpha_k + \tfrac{\beta_k}{2} \dint{\left( \L_0 u_{k + 1} u_{k + 1} + \L_0 u_{k - 1} u_{k - 1} \right)} \Bigr) \left( \L_0 u_{k + 1} + \L_0 u_{k - 1} \right) = f_k\,.
		\end{equation}
		
		In what follows, we denote by $\left( \cdot,\cdot \right)$ the standard inner product in $L^2 ( \domain )$, and by $\left\| \,\cdot\, \right\|$ the corresponding $L^2$-norm. %
		It is well known that the operator $\L_0$ is symmetric and positive definite. In particular, the following inequality is satisfied (see Chap.~18 in \cite{Rektorys1980})
		\begin{equation}\label{eq:pos_def}
			( \L_0 u,u ) \geq \tfrac{2 \pi^2}{\ell_\domain^2} \left\| u \right\|^2\,,\quad u \in D ( \L_0 )\,.
		\end{equation}
		In this context, $\ell_\domain$ denotes the length of the side of the square enclosing the domain $\domain$.
		
		By virtue of the Friedrichs extension theorem, any densely defined, symmetric, positive definite operator admits a self-adjoint, positive definite extension ({\cf} \cite{Maurin1972}). Throughout the remainder of the paper, $\L$ denotes the Friedrichs extension of $\L_0$, the unique self-adjoint, positive definite operator associated with the Dirichlet form $a(u,v) = ( \nabla u, \nabla v )$ on $H_0^1 ( \domain )$, so that $\L_0 \subset \L$ and $D \bigl( \L^{1/2} \bigr) = H_0^1 ( \domain )$. Since $\L$ is self-adjoint and positive definite, its fractional powers $\L^{a}$, $a > 0$, are defined by the spectral calculus, with
		\begin{equation*}
			D \bigl( \L^{a} \bigr) = \Bigl\{ u \in L^2 ( \domain ) : \int_0^\infty \lambda^{2a} \, \dd ( E_\lambda u, u ) < \infty \Bigr\}\,,\quad \bigl\| \L^{a} u \bigr\|^2 = \int_0^\infty \lambda^{2a} \, \dd ( E_\lambda u, u )\,,
		\end{equation*}
		where $\left\{ E_\lambda \right\}$ is the spectral resolution of $\L$. It should be noted that inequality \eqref{eq:pos_def} remains valid for $\L$ on $D ( \L )$.
        Employing the operator $\L$, equation \eqref{eq:nonlinear_scheme_laplacian} takes the following form:
		\begin{equation}\label{eq:nonlinear_scheme_extended}
			\frac{\hat{\Delta}^2 u_{k - 1}}{\tau^2} + \tfrac{1}{2} \Bigl( \alpha_k + \tfrac{\beta_k}{2} \bigl( \| \L^{1/2} u_{k + 1} \|^2 + \| \L^{1/2} u_{k - 1} \|^2 \bigr) \Bigr) \left( \L u_{k + 1} + \L u_{k - 1} \right) = f_k\,,
		\end{equation}
		where $k = 1,2,\ldots,n - 1$.
		
		\subsection{Principal Lemmas}
		In the following lemma, we establish uniform $L^2$-bounds for the sequences of functions $\hat{\Delta} u_{k - 1} / \tau$ and $\L^{1/2} u_k$. This property is necessary both for proving the convergence of the approximate solution and for establishing the convergence of the solution-finding iteration process. It is important to note that this issue is nontrivial, since the energy method does not provide a recurrence inequality suitable for applying the telescoping series cancellation technique. To obtain the desired results, one would typically rely on a discrete Gr\"onwall-type inequality; however, in our case, we were able to derive the final estimates without invoking such an inequality. 
		
		\begin{lemma}\label{prop:lemma1_nonlinear}
            Assume that the initial values in \eqref{eq:starting_vec1} satisfy $u_0, v_0, w_0 \in D ( \L^{1/2} )$. Then the initial energy $E_1$ admits a bound that is uniform in $\tau$, although $E_1$ itself depends on $\tau$ through $u_1$. Then $\hat{\Delta} u_{k - 1} / \tau$ and $\L^{1/2} u_k$ are uniformly bounded in $L^2 ( \domain )$, \ie, there are positive constants $\cstmone$ and $\cstmtwo$, depending only on these data, on $\alpha$, $\beta$, $T$ and $\max_{0 \leq t \leq T} \| f ( t ) \|$, and hence independent of $\tau$, such that the inequalities
			\begin{equation}\label{eq:unif_bound_half_lu}
				\| \hat{\Delta} u_{k - 1} /\tau \| \leq \cstmone\quad \text{and}\quad \| \L^{1/2} u_k \| \leq \cstmtwo
			\end{equation}
			hold for every $k = 1,2,\ldots,n$. 
		\end{lemma}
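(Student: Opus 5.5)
We plan to run the energy method on \eqref{eq:nonlinear_scheme_extended} with time-dependent coefficients. Test it in $L^2(\domain)$ with $u_{k+1}-u_{k-1}=\hat{\Delta} u_k+\hat{\Delta} u_{k-1}$, writing $\mathcal{K}_k$ and $V_k=\tfrac12\|\L^{1/2}u_k\|^2$ as in \eqref{eq:notation_discrete}.

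The first term gives $2(\mathcal{K}_{k+1}-\mathcal{K}_k)$. In the second term, symmetry of $\L$ gives $(\L u_{k+1}+\L u_{k-1},u_{k+1}-u_{k-1})=2(V_{k+1}-V_{k-1})$. Together with the nonlinear factor this becomes
\[
\alpha_k(V_{k+1}-V_{k-1})+\tfrac{\beta_k}{2}\bigl(V_{k+1}^2-V_{k-1}^2\bigr)\cdot 2 ,
\]
up to the factor $\tfrac12$ bookkeeping. The resulting identity is
\[
E_{k+1}-E_k=\tfrac{1}{2}(\alpha_{k+1}-\alpha_k)(V_k+V_{k+1})+\tfrac12(\beta_{k+1}-\beta_k)(V_k^2+V_{k+1}^2)+\tfrac12(f_k,u_{k+1}-u_{k-1}),
\]
where
\[
E_k=\mathcal{K}_k+\tfrac{\alpha_k}{2}(V_{k-1}+V_k)+\tfrac{\beta_k}{2}(V_{k-1}^2+V_k^2)
\]
is the time-dependent version of $\widetilde E_{2,k}$. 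The coefficient mismatch terms come from rewriting $\alpha_k V_{k+1}$ as $\alpha_{k+1}V_{k+1}-(\alpha_{k+1}-\alpha_k)V_{k+1}$, and similarly for the other terms. For $E_1$ we only need the uniform bound: $\|\L^{1/2}u_1\|\le\|\L^{1/2}u_0\|+T\|\L^{1/2}v_0\|+T^2\|\L^{1/2}w_0\|$, and $\hat\Delta u_0/\tau=v_0+\tfrac\tau2 w_0$. This is where the hypothesis $u_0,v_0,w_0\in D(\L^{1/2})$ enters.

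Next we bound the right-hand side by $E$ itself. Since $\alpha,\beta\in C^1$, we have $|\alpha_{k+1}-\alpha_k|\le\tau\|\alpha'\|_\infty$ and $|\beta_{k+1}-\beta_k|\le \tau\|\beta'\|_\infty$. Also $V_j\le E_j/\alpha_{\min}$ and $V_j^2\le 2E_j/\beta_{\min}$. For the source term, $\tfrac12|(f_k,\hat\Delta u_k+\hat\Delta u_{k-1})|\le \tfrac{\tau}{2}\|f_k\|(\sqrt{2\mathcal{K}_{k+1}}+\sqrt{2\mathcal{K}_k})$. Hence
\[
E_{k+1}-E_k\le c\tau(E_k+E_{k+1})+c\tau\|f\|_{C}\bigl(\sqrt{E_k}+\sqrt{E_{k+1}}\bigr).
\]
To avoid a Gr\"onwall lemma, as the paper announces, we would work with $\sqrt{E_k}$. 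Using $\sqrt{E_k}\le 1+E_k$ gives $(1-c\tau)(1+E_{k+1})\le(1+c\tau)(1+E_k)$. Iterating this directly yields
\[
1+E_{k}\le\Bigl(\tfrac{1+c\tau}{1-c\tau}\Bigr)^{k}(1+E_1)\le e^{4cT}(1+E_1)
\]
for $\tau\le 1/(2c)$. For larger $\tau$ there are only finitely many steps, but this case needs a separate crude argument or a smallness assumption on $\tau$. This explicit product iteration is the replacement for Gr\"onwall, and it carries the exponential dependence on $T$ mentioned in the introduction.

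Finally, $\mathcal{K}_k\le E_k$ gives $\cstmone$, and $\alpha_{\min}V_k\le E_k$ gives $\cstmtwo$; the case $k=n$ is covered by $E_n$. The main obstacle is the first step: getting the exact bookkeeping of the telescoping identity with $\tfrac12$ factors and $k$-dependent $\alpha_k,\beta_k$. In particular, the quartic term $\beta_k V^2$ must have its coefficient difference controlled by $E$ linearly rather than quadratically. This works because $E$ already contains $\tfrac{\beta_{\min}}{2}V^2$, so $|\beta_{k+1}-\beta_k|V^2\le c\tau E$ holds without any smallness of the data. I would first verify this identity carefully, and, if needed, shift the coefficients to $\alpha_{k+1}$ inside $E_{k+1}$ to make the cancellation exact.
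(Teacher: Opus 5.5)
\textbf{Gap: the argument only covers small steps.} Your tested identity is correct for your indexing. However, the way you close the recursion proves the bound only for $\tau\le 1/(2c)$, whereas the lemma asserts it for every $\tau=T/n$, with no step-size condition. You leave the remaining case (``a separate crude argument or a smallness assumption'') open, and a smallness assumption would change the statement.

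The restriction comes from two choices, each of which puts a term $c\tau E_{k+1}$ on the right-hand side:
\begin{enumerate}[label=(\alph*)]
\item Because your $E_k$ is built from $\alpha_k,\beta_k$, the coefficient mismatch $\tfrac12(\alpha_{k+1}-\alpha_k)(V_k+V_{k+1})+\tfrac12(\beta_{k+1}-\beta_k)(V_k^2+V_{k+1}^2)$ lives on the \emph{new} level. It can only be controlled by $c\tau E_{k+1}$.
\item Linearizing the source term with $\sqrt{E_{k+1}}\le 1+E_{k+1}$ does the same.
\end{enumerate}
Once $c\tau\ge 1$, the factor $(1-c\tau)$ in $(1-c\tau)(1+E_{k+1})\le(1+c\tau)(1+E_k)$ carries no information. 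Your closing suggestion to ``shift the coefficients to $\alpha_{k+1}$ inside $E_{k+1}$'' does not help, since that is already your convention. The useful shift goes the other way.

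\textbf{How the paper avoids it.} The paper lags the coefficients:
\[
E_k=\mathcal{K}_k+\tfrac{\alpha_{k-1}}{2}(V_{k-1}+V_k)+\tfrac{\beta_{k-1}}{2}(V_{k-1}^2+V_k^2).
\]
After adding $\tfrac{\alpha_k}{2}V_k+\tfrac{\beta_k}{2}V_k^2$, the left side of the tested identity is then \emph{exactly} $E_{k+1}$. The mismatch becomes
\[
\tfrac12(\alpha_k-\alpha_{k-1})(V_{k-1}+V_k)+\tfrac12(\beta_k-\beta_{k-1})(V_{k-1}^2+V_k^2)\le c\tau E_k,
\]
which lives only on the old level. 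This gives
\[
E_{k+1}\le(1+c\tau)E_k+\tfrac{\tau}{\sqrt2}\|f_k\|\bigl(\sqrt{E_{k+1}}+\sqrt{E_k}\bigr).
\]
The remaining $\sqrt{E_{k+1}}$ is not linearized. After unrolling, the paper takes $E_j=\max_{1\le i\le k+1}E_i$ and divides by $\sqrt{E_j}$. Equivalently, solving the quadratic inequality in $x=\sqrt{E_{k+1}}$ gives $\sqrt{E_{k+1}}\le(1+c\tau)\sqrt{E_k}+\tfrac{\tau}{\sqrt2}\|f_k\|$, which iterates to $\sqrt{E_{k+1}}\le e^{cT}\bigl(\sqrt{E_1}+\tfrac{T}{\sqrt2}\max\|f\|\bigr)$. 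In either form no condition on $\tau$ appears.

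\textbf{Repairing your route instead.} Your fallback can also be made rigorous, but it has to be written out. The two energy conventions are equivalent up to $\gamma=\min\{\alpha_{\min}/\alpha_{\max},\beta_{\min}/\beta_{\max}\}$, so each step gives
\[
\gamma E_{k+1}\le E_k+\tfrac{\tau}{\sqrt2}\|f_k\|\bigl(\sqrt{E_k}+\sqrt{E_{k+1}}\bigr),\qquad \tau\le T.
\]
For the finitely many $n<2cT$ there are fewer than $2cT$ steps, so this yields a data-dependent bound. Separately, note that $E_j\ge\tfrac{\alpha_{\min}}{2}V_j$, so $V_j\le 2E_j/\alpha_{\min}$ rather than $E_j/\alpha_{\min}$; this is harmless for the constants.
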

		\begin{proof}
			By taking the inner product of equation \eqref{eq:nonlinear_scheme_extended} with $u_{k + 1} - u_{k - 1} = \hat{\Delta} u_k + \hat{\Delta} u_{k - 1}$ for $k = 1,2,\ldots,n - 1$ and using the fact that the operator $\L$ is self-adjoint and positive definite, we arrive, with $\mathcal{K}_k$ and $V_k$ as in~\eqref{eq:notation_discrete}, at
			\begin{equation}\label{eq:lemma1_nonlinear_first_eqt}
				\mathcal{K}_{k + 1} + \tfrac{\alpha_k}{2} V_{k + 1} + \tfrac{\beta_k}{2} V_{k + 1}^2 = \mathcal{K}_k + \tfrac{\alpha_k}{2} V_{k - 1} + \tfrac{\beta_k}{2} V_{k - 1}^2 + \tfrac{1}{2} ( f_k, \hat{\Delta} u_k ) + \tfrac{1}{2} ( f_k, \hat{\Delta} u_{k - 1} )\,,
			\end{equation}
			Adding the terms $\alpha_k V_k / 2$ and $\beta_k V_k^2 / 2$ to both sides of the relation \eqref{eq:lemma1_nonlinear_first_eqt} yields
			\begin{align*}
				&\mathcal{K}_{k + 1} + \tfrac{\alpha_k}{2} ( V_k + V_{k + 1} ) + \tfrac{\beta_k}{2} ( V_k^2 + V_{k + 1}^2 ) \\
				&\qquad = \mathcal{K}_k + \tfrac{\alpha_k}{2} ( V_{k - 1} + V_k ) + \tfrac{\beta_k}{2} ( V_{k - 1}^2 + V_k^2 ) + \tfrac{1}{2} ( f_k, \hat{\Delta} u_k ) + \tfrac{1}{2} ( f_k, \hat{\Delta} u_{k - 1} )\,.
			\end{align*}
			By introducing the notation $E_k = \mathcal{K}_k + \tfrac{\alpha_{k - 1}}{2} ( V_{k - 1} + V_k ) + \tfrac{\beta_{k - 1}}{2} ( V_{k - 1}^2 + V_k^2 )$, the preceding equality may be rewritten in the form
			\begin{equation}\label{eq:lemma1_nonlinear_transf_eq}
				E_{k + 1} = E_k + \tfrac{1}{2} ( \alpha_k - \alpha_{k - 1} ) ( V_{k - 1} + V_k ) + \tfrac{1}{2} ( \beta_k - \beta_{k - 1} ) ( V_{k - 1}^2 + V_k^2 ) + \tfrac{1}{2} ( f_k, \hat{\Delta} u_k ) + \tfrac{1}{2} ( f_k, \hat{\Delta} u_{k - 1} )\,.
			\end{equation}
			It is obvious that the coefficient functions $\alpha ( t )$ and $\beta ( t )$ satisfy the following inequalities:
			\begin{align}
				\left| \alpha_k - \alpha_{k - 1} \right| &\leq \lmonecstalph \tau \quad \text{with}\quad \lmonecstalph = \max_{0 \leq t \leq T} \left| \alpha^\prime ( t ) \right|\,,\label{eq:lemma1_nonlinear_alpha_est} \\
				\left| \beta_k - \beta_{k - 1} \right| &\leq \lmonecstbeta \tau \quad \text{with}\quad \lmonecstbeta = \max_{0 \leq t \leq T} \left| \beta^\prime ( t ) \right|\,.\label{eq:lemma1_nonlinear_beta_est}
			\end{align}
			Invoking inequalities \eqref{eq:lemma1_nonlinear_alpha_est} and \eqref{eq:lemma1_nonlinear_beta_est}, along with the conditions $\alpha ( t ) \geq \alpha_{\min} > 0$ and $\beta ( t ) \geq \beta_{\min} > 0$, we derive an estimate for the following term:
			\begin{align}\label{eq:lemma1_nonlinear_term2_3}
				&\tfrac{1}{2} \left( \alpha_k - \alpha_{k - 1} \right) \left( V_{k - 1} + V_k \right) + \tfrac{1}{2} \left( \beta_k - \beta_{k - 1} \right) \left( V_{k - 1}^2 + V_k^2 \right)\nonumber \\
				&\qquad \leq \max\{ \lmonecstalph, \lmonecstbeta \} \tau \left[ \tfrac{1}{2} \left( V_{k - 1} + V_k \right) + \tfrac{1}{2} \left( V_{k - 1}^2 + V_k^2 \right) \right] \leq \lmonecstdiff \tau E_k\,,
			\end{align}
			the last step by inserting the factors $\alpha_{k-1}/\alpha_{k-1}$ and $\beta_{k-1}/\beta_{k-1}$, bounding the denominators below by $\alpha_{\min}$ and $\beta_{\min}$, and recognising $E_k$, thus, $\lmonecstdiff = \max \{ \lmonecstalph, \lmonecstbeta \} / \min \{ \alpha_{\min}, \beta_{\min} \}$.
			By applying the Cauchy–Schwarz inequality to the inner products below, we conclude that
			\begin{multline}\label{eq:lemma1_nonlinear_source_term}
				\tfrac{1}{2} ( f_k, \hat{\Delta} u_k ) + \tfrac{1}{2} ( f_k, \hat{\Delta} u_{k - 1} ) \leq \tfrac{1}{2} \| f_k \| ( \| \hat{\Delta} u_k \| + \| \hat{\Delta} u_{k - 1} \| ) \\
				= \tau\tfrac{1}{\sqrt{2}} ( \sqrt{\mathcal{K}_{k + 1}} + \sqrt{\mathcal{K}_k} ) \left\| f_k \right\| \leq \tau\tfrac{1}{\sqrt{2}} ( \sqrt{E_{k + 1}} + \sqrt{E_k} ) \left\| f_k \right\|\,.
			\end{multline}
			Substituting inequalities \eqref{eq:lemma1_nonlinear_term2_3} and \eqref{eq:lemma1_nonlinear_source_term} into relation \eqref{eq:lemma1_nonlinear_transf_eq} leads to the following conclusion:
			\begin{equation*}
				E_{k + 1} \leq ( 1 + \lmonecstdiff \tau ) E_k + \tau\tfrac{1}{\sqrt{2}} ( \sqrt{E_{k + 1}} + \sqrt{E_k} ) \left\| f_k \right\|\,.
			\end{equation*}
			Iterating this inequality and using $( 1 + \lmonecstdiff \tau )^k \leq e^{\lmonecstdiff t_k}$ gives
			\begin{equation}\label{eq:lemma1_nonlinear_recurrence_ineq}
				E_{k + 1} \leq e^{\lmonecstdiff t_k} \Bigl( E_1 + \tau\tfrac{1}{\sqrt{2}} \sum_{i = 1}^{k} \left( \sqrt{E_{i + 1}} + \sqrt{E_i} \right) \left\| f_i \right\| \Bigr)\,.
			\end{equation}
			We now apply the maximal-index device of \cite{RogTsikl2014,RogVashZAMM2024}, which is used
			twice more below. Let $E_j = \max_{1 \leq i \leq k + 1} E_i$. Then
			\eqref{eq:lemma1_nonlinear_recurrence_ineq} holds with $k+1$ replaced by $j$, and dividing
			it by $\sqrt{E_j}$ makes every ratio $\sqrt{E_i}/\sqrt{E_j}$ at most $1$, so that
			\begin{equation*}
				\sqrt{E_j} \leq e^{\lmonecstdiff t_k} \Bigl( \sqrt{E_1} + \sqrt{2} \tau \sum_{i = 1}^{j - 1} \left\| f_i \right\| \Bigr) \leq e^{\lmonecstdiff t_k} \bigl( \sqrt{E_1} + \sqrt{2} t_k \max_{1 \leq i \leq k}\left\| f_i \right\| \bigr)\,.
			\end{equation*}
			Since $E_{k + 1} \leq E_j$, it can be concluded that
			\begin{equation*}
				E_{k + 1} \leq e^{2 \lmonecstdiff T} {\Bigl( \sqrt{E_1} + \sqrt{2} T \max_{0 \leq t \leq T} \left\| f ( t ) \right\| \Bigr)}^2\,,\quad k = 1,2,\ldots,n - 1\,,
			\end{equation*}
			where $\max_{1 \leq i \leq n} \| f_i \| \leq \max_{0 \leq t \leq T} \| f ( t ) \|$, a quantity attached to $f$ and not to the grid, since $f \in C ( [0,T] ; L^2 ( \domain ) )$.
			For $E_1$, \eqref{eq:starting_vec1} gives $( u_1 - u_0 ) / \tau = v_0 + \tfrac{\tau}{2} w_0$ and $u_1 = u_0 + \tau v_0 + \tfrac{\tau^2}{2} w_0$, whence $\mathcal{K}_1 \leq ( \| v_0 \| + T \| w_0 \| )^2$ and $V_0, V_1 \leq ( \| \L^{1/2} u_0 \| + T \| \L^{1/2} v_0 \| + T^2 \| \L^{1/2} w_0 \| )^2$ for $0 < \tau \leq T$ free of $\tau$. We thus conclude that the sequences $\left( u_k - u_{k - 1} \right) / \tau$ and $\L^{1/2} u_k$ are uniformly bounded. %
		In particular, the discrete kinetic and potential energies are uniformly bounded.
		\end{proof}

		Convergence of the approximate solution and of the iteration process below depends further
		on the uniform boundedness of $\L^{1/2} ( \hat{\Delta} u_{k - 1} / \tau )$ and $\L u_k$. Its
		proof rests on the following nonlinear inequality, which we quote to keep the paper
		self-contained (see Lemma 3.2 in \cite{RogTsikl2012})
		\begin{lemma}[{\cf} Rogava and Tsiklauri \cite{RogTsikl2012}]\label{prop:lemma_RogTsikl2012}
			Let $\left\{ a_k \right\}_{k = 0}^{n}$ and $\left\{ b_k \right\}_{k = 0}^{n}$ be sequences of nonnegative real numbers satisfying
			\begin{equation*}
				a_{k + 1} \leq a_k ( 1 + \tau a_k^s ) + \tau b_k
			\end{equation*}
			for all $k = 0,1,\ldots,n - 1$, where $s$ and $\tau$ are strictly positive constants. %
			Then the following estimate follows
			\begin{equation*}
				a_k \leq \frac{a}{( 1 - s a^s c_k t_k )^{1/s}}\,,\quad t_k = k \tau < \frac{1}{ s a^s c_k}\,,\quad a = \max\{ 1, a_0 \}\,,\quad c_k = 1 + \max_{0 \leq i \leq k} b_i\,.
			\end{equation*}
		\end{lemma}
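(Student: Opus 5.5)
We would prove the lemma by comparing $\{a_k\}$ with the solution of the Bernoulli-type differential equation $y' = c\,y^{1+s}$, whose solution with $y(0)=a$ is $y(t) = a\,(1 - s a^s c t)^{-1/s}$. Fix $k$ and put $c = c_k = 1 + \max_{0\le i\le k} b_i$. Consider the indices $i = 0,\dots,k-1$. The first step is to make the recursion homogeneous and to normalise it so that every term is at least $1$. Set $\tilde a_0 = a = \max\{1,a_0\}$ and define the majorant sequence $\tilde a_{i+1} = \tilde a_i + \tau c\,\tilde a_i^{1+s}$. By induction we get $a_i \le \tilde a_i$ and $\tilde a_i \ge 1$. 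The inductive step uses three facts: $x\mapsto x(1+\tau x^s)$ is increasing on $[0,\infty)$; $b_i \le c-1$; and $\tilde a_i \ge 1$ gives $\tau b_i \le \tau (c-1)\tilde a_i^{1+s}$. Together these yield
\[
a_{i+1} \le \tilde a_i + \tau \tilde a_i^{1+s} + \tau(c-1)\tilde a_i^{1+s} = \tilde a_{i+1}.
\]

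The second step is to bound $\tilde a_i$ by the continuous solution, using the substitution $z_i = \tilde a_i^{-s}$. Writing $\tilde a_{i+1} = \tilde a_i(1+\tau c\,\tilde a_i^s)$ gives
\[
z_{i+1} = z_i\,(1 + \tau c/z_i)^{-s}.
\]
Bernoulli's inequality $(1+x)^{-s} \ge 1 - s x$ holds for $x \ge 0$ and $s > 0$, since $(1+x)^{-s}$ is convex. Applying it yields $z_{i+1} \ge z_i - s\tau c$. Summing from $i=0$ to $i=k-1$ gives $z_k \ge a^{-s} - s c\, t_k$. Under the hypothesis $t_k < 1/(s a^s c_k)$ the right-hand side is positive, so we may invert and obtain
\[
a_k \le \tilde a_k = z_k^{-1/s} \le \bigl(a^{-s} - s c_k t_k\bigr)^{-1/s} = \frac{a}{(1 - s a^s c_k t_k)^{1/s}}.
\]

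The main point requiring care is the first step. The inhomogeneous term $\tau b_i$ must be absorbed into the nonlinear term, and this requires $\tilde a_i \ge 1$; that is exactly why the statement uses $a = \max\{1,a_0\}$ and $c_k = 1 + \max b_i$. We also need the monotonicity of the map $x(1+\tau x^s)$ so that the comparison propagates through the induction. The $z$-substitution step is then routine, provided we check that Bernoulli's inequality applies in the direction needed. Positivity of $z_k$ needs no separate argument, since $z_k > 0$ holds automatically when $\tilde a_k$ is finite. Finally, the estimate holds for each $k$ separately with its own constant $c_k$, because the comparison only involves $b_0,\dots,b_{k-1}$.
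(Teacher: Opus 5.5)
Your proof is correct and complete. The paper gives no proof of this lemma: it quotes it from Lemma~3.2 of \cite{RogTsikl2012} to keep the exposition self-contained. So your proposal supplies an argument the paper does not contain, and there is no in-paper route to compare it with.

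Both of your steps hold as written.
\begin{itemize}
\item The majorant $\tilde a_{i+1} = \tilde a_i + \tau c_k \tilde a_i^{1+s}$ dominates $a_i$ by induction. This uses that $x \mapsto x(1+\tau x^s)$ is increasing on $[0,\infty)$, that $b_i \le c_k - 1$, and that $\tilde a_i \ge 1$; the last point is exactly why $a=\max\{1,a_0\}$ is needed.
\item The substitution $z_i = \tilde a_i^{-s}$ together with $(1+x)^{-s} \ge 1 - sx$ gives $z_k \ge a^{-s} - s c_k t_k > 0$. This inverts to the stated bound because $y \mapsto y^{-1/s}$ is decreasing on $(0,\infty)$.
\end{itemize}

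Two small observations.
\begin{itemize}
\item Your argument actually yields the bound with $c_k$ replaced by $1+\max_{0\le i\le k-1} b_i$, since $b_k$ never enters.
\item Your two steps can be merged into one induction. The function $\varphi(t) = a(1 - s a^s c_k t)^{-1/s}$ solves $\varphi' = c_k\varphi^{1+s}$ and is convex and at least $1$. Hence $\varphi(t_{i+1}) \ge \varphi(t_i) + \tau c_k \varphi(t_i)^{1+s}$, and one can induct $a_i \le \varphi(t_i)$ directly. Your Bernoulli-inequality step is the discrete counterpart of this convexity.
\end{itemize}
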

		
		\begin{lemma}\label{prop:lemma2_nonlinear}
			Assume that the data satisfy the operator-domain regularity
			\begin{equation}\label{eq:lemma2_data_regularity}
				u_0 \in D ( \L^2 )\,,\qquad
				v_0 \in D ( \L )\,,\qquad
				f \in C \bigl( \left[ 0,T \right] ; D \bigl( \L^{1/2} \bigr) \bigr)
				\quad\text{with}\quad f_0 \in D ( \L )\,,
			\end{equation}
			\noindent so that, by \eqref{eq:starting_vec1}, also
			$w_0 = f_0 - q_0 \L u_0 \in D ( \L )$.
			The sequences of functions $\L^{1/2} \left( u_k - u_{k - 1} \right) / \tau$ and $\L u_k$ are locally uniformly bounded in the $L^2 ( \domain )$. More precisely, there exists $\overline{T} \in ( 0,T ]$ such that
			\begin{equation}\label{eq:unif_bound_lu}
				\| \L^{1/2} (u_k - u_{k - 1})/\tau \| \leq \cstmthree\,,\quad \left\| \L u_k \right\| \leq \cstmfour\,,\quad 1 \leq k \leq \Bigl[ \frac{\overline{T}}{\tau} \Bigr]\,,
			\end{equation}
			where $\cstmthree$ and $\cstmfour$ are positive constants depending on the value of $\overline{T}$.
		\end{lemma}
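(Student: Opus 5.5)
I would follow the pattern of \lemref{prop:lemma1_nonlinear}, but now test \eqref{eq:nonlinear_scheme_extended} with $\L(u_{k+1}-u_{k-1}) = \L\hat{\Delta} u_k + \L\hat{\Delta} u_{k-1}$ instead of $u_{k+1}-u_{k-1}$. Write
\[
\widehat{\mathcal{K}}_k = \tfrac12\|\L^{1/2}\hat{\Delta} u_{k-1}/\tau\|^2,\qquad W_k = \tfrac12\|\L u_k\|^2,\qquad p_k = \alpha_k + \beta_k (V_{k+1}+V_{k-1}).
\]
Here $p_k$ is the nonlinear coefficient appearing in \eqref{eq:nonlinear_scheme_extended}. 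Using self-adjointness of $\L$, the inertial term gives $\widehat{\mathcal{K}}_{k+1}-\widehat{\mathcal{K}}_k$. Since $(\L u_{k+1}+\L u_{k-1},\L u_{k+1}-\L u_{k-1}) = 2(W_{k+1}-W_{k-1})$, the elliptic term gives $\tfrac12 p_k (W_{k+1}-W_{k-1})$. The source gives
\[
\tfrac12\bigl(\L^{1/2}f_k,\L^{1/2}(\hat{\Delta} u_k+\hat{\Delta} u_{k-1})\bigr),
\]
which is why $f\in C([0,T];D(\L^{1/2}))$ is needed. Adding $\tfrac12 p_k W_k$ to both sides, I would introduce
\[
\widehat E_k = \widehat{\mathcal{K}}_k + \tfrac12 p_{k-1}(W_{k-1}+W_k),
\]
so that
\[
\widehat E_{k+1} = \widehat E_k + \tfrac12 (p_k-p_{k-1})(W_{k-1}+W_k) + \text{source}.
\]
The difference from \lemref{prop:lemma1_nonlinear} is that the coefficient $p_k$ now depends on the solution, and that is where the nonlinear growth enters.

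The key step is estimating $p_k - p_{k-1}$. The part $\alpha_k-\alpha_{k-1}$, together with $\beta_k-\beta_{k-1}$ times $V_{k+1}+V_{k-1}$, is $\bigO(\tau)$ by \eqref{eq:lemma1_nonlinear_alpha_est}, \eqref{eq:lemma1_nonlinear_beta_est} and \lemref{prop:lemma1_nonlinear}. The remaining piece is
\[
\beta_{k-1}\bigl(V_{k+1}-V_{k-1} + V_{k-2}-V_k\bigr)\quad(\text{or similar index bookkeeping}).
\]
I would write
\[
V_{k+1}-V_{k-1} = \tfrac12\bigl(\L^{1/2}(u_{k+1}-u_{k-1}),\L^{1/2}(u_{k+1}+u_{k-1})\bigr) = \tfrac12\bigl(u_{k+1}-u_{k-1},\L(u_{k+1}+u_{k-1})\bigr).
\]
This is bounded by $\tau(\|\hat{\Delta} u_k/\tau\|+\|\hat{\Delta} u_{k-1}/\tau\|)(\|\L u_{k+1}\|+\|\L u_{k-1}\|)$. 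Using $\cstmone$ from \lemref{prop:lemma1_nonlinear}, this is $\le c\,\tau(\sqrt{W_{k+1}}+\sqrt{W_{k-1}})$. Multiplied by $W_{k-1}+W_k$, this produces a term of size $\tau\widehat E^{3/2}$, using $p_{k-1}\ge\alpha_{\min}$ so that $W\le \widehat E/\alpha_{\min}$. The source term is bounded as in \eqref{eq:lemma1_nonlinear_source_term} by $c\tau\max\|\L^{1/2}f\|\,(\sqrt{\widehat E_{k+1}}+\sqrt{\widehat E_k})$.

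The main obstacle is that $\widehat E_{k+1}$ also appears on the right through $W_{k+1}$ and $\sqrt{\widehat E_{k+1}}$. I would absorb these terms for $\tau$ small, e.g.\ via $\sqrt{x}\le 1+x$ and requiring $c\tau\le 1/2$. Combined with the maximal-index device or a direct induction, this should give a recursion of the form
\[
a_{k+1}\le a_k(1+\tau a_k^{s}) + \tau b_k,
\]
with $a_k = \widehat E_k$ (possibly the running maximum) and $s=1/2$, after bounding $\widehat E^{1/2}\le 1+\widehat E^{3/2}$-type splittings. \lemref{prop:lemma_RogTsikl2012} then bounds $a_k$ for $t_k<1/(s a^s c)$. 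Choosing $\overline T$ strictly below that threshold, say half of it, yields a uniform bound on $\widehat E_k$, hence on $\widehat{\mathcal{K}}_k$ and $W_k$. This gives $\cstmthree$ and $\cstmfour$, for $k\le[\overline T/\tau]$.

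Finally, I would check that $\widehat E_1$ is bounded independently of $\tau$. From \eqref{eq:starting_vec1},
\[
\L^{1/2}\hat{\Delta} u_0/\tau = \L^{1/2}v_0+\tfrac\tau2\L^{1/2}w_0,
\]
and $\L u_1$ and $\L u_0$ involve $\L u_0$, $\L v_0$ and $\L w_0$. These are finite by \eqref{eq:lemma2_data_regularity}: $u_0\in D(\L^2)$ and $f_0\in D(\L)$ give $w_0\in D(\L)$. The coefficient $p_0$ is bounded by \lemref{prop:lemma1_nonlinear}.
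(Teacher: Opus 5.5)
Your plan follows the paper's proof in outline. Both use the test function $\L(u_{k+1}-u_{k-1})$ and the modified energy $\hat E_k=\hat{\mathcal K}_k+\tfrac12 p_{k-1}(W_{k-1}+W_k)$ (your $W_k$ is the paper's $\hat V_k$). Both derive an $\bigO(\tau)$ bound on the coefficient increment that produces a $\tau\hat E^{3/2}$ term, reduce to \lemref{prop:lemma_RogTsikl2012}, and bound $\hat E_1$ independently of $\tau$ (defining $p_0$ needs the convention $u_{-1}=u_0$). The nonlocal part of the increment is $\beta_{k-1}\bigl((V_{k+1}-V_k)+(V_{k-1}-V_{k-2})\bigr)$, not the combination you display. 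Your identity applies unchanged to these one-step differences, though. Pairing the $L^2$-velocity bound $M_1$ with $\|\L u\|$ is a valid alternative to the paper's $|V_{k+1}-V_k|\le\sqrt2 M_2\tau\sqrt{\hat{\mathcal K}_{k+1}}$, which pairs the strain bound with the $\L^{1/2}$-velocity, and it leads to the same recursion.

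The step that fails as written is your handling of the implicit term $\gamma\tau\sqrt{\hat E_{k+1}}\,\hat E_k$. Splitting with $\sqrt{x}\le 1+x$ leaves $\gamma\tau\hat E_k\hat E_{k+1}$. This can be moved to the left only if $\gamma\tau\hat E_k\le\tfrac12$, which is a condition on the unknown and does not follow from $\gamma\tau\le\tfrac12$ with a data constant. You could rescue it by an induction on the bound being proved, but that costs a step-size restriction the lemma does not contain.

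The paper instead uses the maximal-index device, which you mention only in passing but which is what replaces absorption. Set $\overline E_k=\max_{1\le i\le k}\hat E_i$, evaluate the inequality at the maximising index, and divide by $\sqrt{\overline E_{k+1}}$. This turns $\sqrt{\overline E_{k+1}}\,\overline E_k$ into $\overline E_k$ and gives the explicit recursion $\sqrt{\overline E_{k+1}}\le(1+\gamma_1\tau)\sqrt{\overline E_k}+2\gamma_2\tau\,\overline E_k+\gamma_3\tau$ for every $\tau$.

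Also, \lemref{prop:lemma_RogTsikl2012} allows no separate linear factor. Before applying it, the paper removes $(1+\gamma_1\tau)$ by passing to $\sqrt{\overline E_k}/(1+\gamma_1\tau)^k$ and using $(1+\gamma_1\tau)^k\le e^{\gamma_1 T}$. Your sketch needs this step, or an equivalent folding of $\gamma_1\tau a_k\le\gamma_1\tau(1+a_k^{1+s})$ into the other terms.
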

		\begin{proof}
			For each $k = 1,2,\ldots,n - 1$, we take the inner product of equation \eqref{eq:nonlinear_scheme_extended} with $\L \left( u_{k + 1} - u_{k - 1} \right) = \L \left( \hat{\Delta} u_k \right) + \L \left( \hat{\Delta} u_{k - 1} \right)$. By virtue of the self-adjointness and positive definiteness of the operator $\L$, and using that $f_k \in D ( \L^{1/2} )$ for every $k$ by~\eqref{eq:lemma2_data_regularity}, we obtain the following equality:
			\begin{multline}\label{eq:lemma2_nonlinear_init_eqt}
				\hat{\mathcal{K}}_{k + 1} + \tfrac{1}{2} [ \alpha_k + \beta_k ( V_{k + 1} + V_{k - 1} ) ] \hat{V}_{k + 1} \\
				= \hat{\mathcal{K}}_k + \tfrac{1}{2} [ \alpha_k + \beta_k ( V_{k + 1} + V_{k - 1} ) ] \hat{V}_{k - 1} + \tfrac{1}{2} ( \L^{1/2} f_k, \L^{1/2} ( \hat{\Delta} u_k ) ) + \tfrac{1}{2} ( \L^{1/2} f_k, \L^{1/2} ( \hat{\Delta} u_{k - 1} )
			\end{multline}
			with
			\begin{equation*}
				\hat{\mathcal{K}}_k = \tfrac{1}{2} \bigl\| \L^{1/2} \frac{\hat{\Delta} u_{k - 1}}{\tau} \bigr\|^2\,,\quad\hat{V}_k = \tfrac{1}{2} \| \L u_k \|^2\,,\quad \text{and} \quad V_k = \tfrac{1}{2} \| \L^{1/2} u_k \|^2\,.
			\end{equation*}
			
			An application of the Cauchy–Schwarz inequality yields:
			\begin{multline}\label{eq:lemma2_nonlinear_Cauchy_Schwarz_ineq}
				( \L^{1/2} f_k, \L^{1/2} ( \hat{\Delta} u_k ) ) + ( \L^{1/2} f_k, \L^{1/2} ( \hat{\Delta} u_{k - 1} ) ) \\
				\leq \tau \| \L^{1/2} f_k \| \Bigl( \Bigl\| \L^{1/2} \frac{\hat{\Delta} u_k}{\tau} \Bigr\| + \Bigl\| \L^{1/2} \frac{\hat{\Delta} u_{k - 1}}{\tau} \Bigr\| \Bigr) = \tau \sqrt{2} \| \L^{1/2} f_k \| \bigl( \sqrt{\hat{\mathcal{K}}_{k + 1}} + \sqrt{\hat{\mathcal{K}}_k} \bigr)\,.
			\end{multline}
			Substituting inequality \eqref{eq:lemma2_nonlinear_Cauchy_Schwarz_ineq} into \eqref{eq:lemma2_nonlinear_init_eqt}, we obtain:
			\begin{multline}\label{eq:lemma2_nonlinear_main_ineq}
				\hat{\mathcal{K}}_{k + 1} + \tfrac{1}{2} \left[ \alpha_k + \beta_k \left( V_{k + 1} + V_{k - 1} \right) \right] \hat{V}_{k + 1} \\
				\leq \hat{\mathcal{K}}_k + \tfrac{1}{2} \left[ \alpha_k + \beta_k \left( V_{k + 1} + V_{k - 1} \right) \right] \hat{V}_{k - 1} + \tau \tfrac{1}{\sqrt{2}} \| \L^{1/2} f_k \| \Bigl( \sqrt{\hat{\mathcal{K}}_{k + 1}} + \sqrt{\hat{\mathcal{K}}_k} \Bigr)\,.
			\end{multline}
			Proceeding further, adding the term $\left[ \alpha_k + \beta_k \left( V_{k + 1} + V_{k - 1} \right) \right] \hat{V}_k / 2$ to both sides of inequality \eqref{eq:lemma2_nonlinear_main_ineq}, and introducing the notation $W_k = V_{k + 1} + V_{k - 1} \quad \text{and} \quad \hat{W}_k = \tfrac12 (\hat{V}_k + \hat{V}_{k - 1})$ leads to the following inequality
			\begin{equation}\label{eq:lemma2_nonlinear_before_recurrence}
				\hat{\mathcal{K}}_{k + 1} + ( \alpha_k + \beta_k W_k ) \hat{W}_{k + 1} \leq \hat{\mathcal{K}}_k + ( \alpha_k + \beta_k W_k ) \hat{W}_k + \tau \tfrac{1}{\sqrt{2}} \| \L^{1/2} f_k \| \Bigl( \sqrt{\hat{\mathcal{K}}_{k + 1}} + \sqrt{\hat{\mathcal{K}}_k} \Bigr)\,.
			\end{equation}
			To obtain the desired recurrence inequality, we perform an additional elementary manipulation. In particular, we add and subtract the terms $\alpha_{k - 1} \hat{W}_k$ and $\beta_{k - 1} W_{k - 1} \hat{W}_{k}$ on the right-hand side of inequality \eqref{eq:lemma2_nonlinear_before_recurrence}, and subsequently collect terms by factoring out the common factors. This yields:
			\begin{equation}\label{eq:lemma2_nonlinear_Tk_enq}
				\begin{aligned}
					&\hat{\mathcal{K}}_{k + 1} + ( \alpha_k + \beta_k W_k ) \hat{W}_{k + 1} \leq \hat{\mathcal{K}}_k + \left( \alpha_{k - 1} + \beta_{k - 1} W_{k - 1} \right) \hat{W}_k + \left( \alpha_k - \alpha_{k - 1} \right) \hat{W}_k \\
					&\qquad + \left( \beta_k W_k - \beta_{k - 1} W_{k - 1} \right) \hat{W}_k + \tau \tfrac{1}{\sqrt{2}} \| \L^{1/2} f_k \| \Bigl( \sqrt{\hat{\mathcal{K}}_{k + 1}} + \sqrt{\hat{\mathcal{K}}_k} \Bigr)\quad \text{for}\quad k = 1,2,\ldots,n - 1\,.
				\end{aligned}
			\end{equation}
			Under the assumption $u_{-1} = u_0$, it follows that $\mathcal{K}_0 = \hat{\mathcal{K}}_0 = 0$, $V_{-1} = V_0$, and $\hat{V}_{-1} = \hat{V}_0$. Moreover, applying \lemref{prop:lemma1_nonlinear} to estimate the absolute value of the difference $W_k - W_{k - 1}$ for $k = 1,2,\ldots,n - 1$, we find that:
			\begin{align}\label{eq:lemma2_nonlinear_diff_W}
				\left| W_k - W_{k - 1} \right| &= \left| \left( V_{k + 1} - V_k \right) + \left( V_{k - 1} - V_{k - 2} \right) \right|\nonumber \\
				&\leq \bigl( \sqrt{V_{k + 1}} + \sqrt{V_k} \bigr) \bigl| \sqrt{V_{k + 1}} - \sqrt{V_k} \bigr| + \bigl( \sqrt{V_{k - 1}} + \sqrt{V_{k - 2}} \bigr) \bigl| \sqrt{V_{k - 1}} - \sqrt{V_{k - 2}} \bigr|\nonumber \\
				&\leq \sqrt{2} \cstmtwo \tau \Bigl( \sqrt{\hat{\mathcal{K}}_{k + 1}} + \sqrt{\hat{\mathcal{K}}_{k - 1}} \Bigr)\,.
			\end{align}
			Combining inequalities \eqref{eq:lemma1_nonlinear_beta_est} and \eqref{eq:lemma2_nonlinear_diff_W} with \lemref{prop:lemma1_nonlinear}, one can obtain the following estimates:
			\begin{align}\label{eq:lemma2_nonlinear_diff_beta_w}
				\left| \beta_k W_k - \beta_{k - 1} W_{k - 1} \right| &= \left| \left( \beta_k - \beta_{k - 1} \right) W_k + \beta_{k - 1} \left( W_k - W_{k - 1} \right) \right|\nonumber \\
				&\leq \lmonecstbeta \cstmtwo^2 \tau + \sqrt{2} \cstmtwo \beta_{k - 1} \tau \Bigl( \sqrt{\hat{\mathcal{K}}_{k + 1}} + \sqrt{\hat{\mathcal{K}}_{k - 1}} \Bigr)\nonumber \\
				&\leq \lmtwocstdiffbw \tau \Bigl( 1 + \sqrt{\hat{\mathcal{K}}_{k + 1}} + \sqrt{\hat{\mathcal{K}}_{k - 1}} \Bigr)\,,\quad \lmtwocstdiffbw = \cstmtwo \max \Bigl\{ \lmonecstbeta \cstmtwo, \sqrt{2} \beta_{\max} \Bigr\}\,.
			\end{align}
			Employing inequalities \eqref{eq:lemma1_nonlinear_alpha_est} and \eqref{eq:lemma2_nonlinear_diff_beta_w}, and introducing the notation
			\begin{equation*}
				\hat{E}_k = \hat{\mathcal{K}}_k + \left( \alpha_{k - 1} + \beta_{k - 1} W_{k - 1} \right) \hat{W}_k\,,\quad k = 1,2,\ldots,n\,,
			\end{equation*}
			we may express inequality \eqref{eq:lemma2_nonlinear_Tk_enq}, with $\lmtwocsthatenerg = \lmonecstalph + \lmtwocstdiffbw$, in the following form:
			\begin{equation*}
				\hat{E}_{k + 1} \leq \hat{E}_k + \lmtwocsthatenerg \tau \hat{W}_k + \lmtwocstdiffbw \tau \Bigl( \sqrt{\hat{\mathcal{K}}_{k + 1}} + \sqrt{\hat{\mathcal{K}}_{k - 1}} \Bigr) \hat{W}_k + \tau \tfrac{1}{\sqrt{2}} \| \L^{1/2} f_k \| \Bigl( \sqrt{\hat{\mathcal{K}}_{k + 1}} + \sqrt{\hat{\mathcal{K}}_k} \Bigr)\,.
			\end{equation*}
			It may be observed that the following relations hold:
			\begin{equation*}
				\hat{\mathcal{K}}_k \leq \hat{E}_k \quad \text{and} \quad \hat{W}_k \leq \tfrac{1}{\alpha_{\min}} \hat{E}_k\,,\quad \text{for} \quad k = 1,2,\ldots,n\,.
			\end{equation*}
			In view of the relations above, the preceding inequality implies that
			\begin{gather*}
				\hat{E}_{k + 1} \leq ( 1 + \lmtwocstfracone \tau ) \hat{E}_k + \lmtwocstfractwo \tau \Bigl( \sqrt{\hat{E}_{k + 1}} + \sqrt{\hat{E}_{k - 1}} \Bigr) \hat{E}_k + \tfrac{\lmtwocstmaxf}{\sqrt{2}} \tau \Bigl( \sqrt{\hat{E}_{k + 1}} + \sqrt{\hat{E}_k} \Bigr)\,, \\
				\lmtwocstfracone = \tfrac{\lmtwocsthatenerg}{\alpha_{\min}}\,,\qquad \lmtwocstfractwo = \tfrac{\lmtwocstdiffbw}{\alpha_{\min}}\,,\qquad \lmtwocstmaxf = \max_{0 \leq t \leq T} \| \L^{1/2} f( t ) \|\,,
			\end{gather*}
			where we additionally assume that $\hat{E}_0 = \hat{\mathcal{K}}_0 = 0$.
			We introduce the notation $\overline{E}_k = \max_{1 \leq i \leq k} \hat{E}_i \quad \text{with} \quad \overline{E}_0 = \hat{E}_0 = 0$.
			Suppose that $\hat{E}_{i + 1}$, for $0 \leq i \leq k$, attains its maximum at $i = j$. Then, by the notation established above, we have:
			\begin{equation*}
				\overline{E}_{k + 1} = \max_{0 \leq i \leq k} \hat{E}_{i + 1} = \hat{E}_{j + 1}\,,\quad k = 1,2,\ldots,n - 1\,.
			\end{equation*}
			If $j \geq 1$, then it follows that:
			\begin{align*}
				\overline{E}_{k + 1} = \hat{E}_{j + 1} &\leq ( 1 + \lmtwocstfracone \tau ) \hat{E}_j + \lmtwocstfractwo \tau \Bigl( \sqrt{\hat{E}_{j + 1}} + \sqrt{\hat{E}_{j - 1}} \Bigr) \hat{E}_j + \tfrac{\lmtwocstmaxf}{\sqrt{2}} \tau \Bigl( \sqrt{\hat{E}_{j + 1}} + \sqrt{\hat{E}_j} \Bigr) \\
				&\leq ( 1 + \lmtwocstfracone \tau ) \overline{E}_k + \lmtwocstfractwo \tau \Bigl( \sqrt{\overline{E}_{k + 1}} + \sqrt{\overline{E}_{k - 1}} \Bigr) \overline{E}_k + \tfrac{\lmtwocstmaxf}{\sqrt{2}} \tau \Bigl( \sqrt{\overline{E}_{k + 1}} + \sqrt{\overline{E}_k} \Bigr)\,.
			\end{align*}
			Dividing both sides of the preceding equations by $\sqrt{\overline{E}_{k + 1}}$, we arrive at:
			\begin{equation*}
				\sqrt{\overline{E}_{k + 1}} \leq ( 1 + \lmtwocstfracone \tau ) \sqrt{\frac{\overline{E}_k}{\overline{E}_{k + 1}}} \sqrt{\overline{E}_k} + \lmtwocstfractwo \tau \Biggl( 1 + \sqrt{\frac{\overline{E}_{k - 1}}{\overline{E}_{k + 1}}} \Biggr) \overline{E}_k + \tfrac{\lmtwocstmaxf}{\sqrt{2}} \tau \Biggl( 1 + \sqrt{\frac{\overline{E}_k}{\overline{E}_{k + 1}}} \Biggr)\,.
			\end{equation*}
			From this, we deduce that:
			\begin{equation}\label{eq:lemma2_nonlinear_sqrt_E}
				\sqrt{\overline{E}_{k + 1}} \leq ( 1 + \lmtwocstfracone \tau ) \sqrt{\overline{E}_k} + 2 \lmtwocstfractwo \tau \overline{E}_k + \sqrt{2} \lmtwocstmaxf \tau\,.
			\end{equation}
			Let us now consider the case $j = 0$. Then we have $\overline{E}_{k + 1} = \hat{E}_1$, and since $k \geq 1$, it follows that $\overline{E}_k = \hat{E}_1$. Consequently, in this case, inequality \eqref{eq:lemma2_nonlinear_sqrt_E} is trivially fulfilled. \\%
			We now bring \eqref{eq:lemma2_nonlinear_sqrt_E} into the form required by
			\lemref{prop:lemma_RogTsikl2012}. Writing $\mu_k = \sqrt{\overline{E}_k}$, dividing by
			$( 1 + \lmtwocstfracone \tau )^{k+1}$ and setting
			$\overline{\mu}_k = \mu_k / ( 1 + \lmtwocstfracone \tau )^{k}$,
			\eqref{eq:lemma2_nonlinear_sqrt_E} becomes
			\begin{equation*}
				\overline{\mu}_{k + 1} \leq \left( 1 + 2 \lmtwocstfractwo ( 1 + \lmtwocstfracone \tau )^{k - 1} \tau \overline{\mu}_k \right) \overline{\mu}_k + \frac{\sqrt{2} \lmtwocstmaxf}{( 1 + \lmtwocstfracone \tau )^{k + 1}} \tau\,.
			\end{equation*}
			Bounding $\lmtwocstfractwo ( 1 + \lmtwocstfracone \tau )^{k - 1} \leq \lmtwocstnearestexp \coloneqq \lmtwocstfractwo e^{\lmtwocstfracone T}$ and
			$\lmtwocstmaxf ( 1 + \lmtwocstfracone \tau )^{-(k+1)} \leq \lmtwocstmaxf$, and setting
			$\overline{\tau} = 2 \lmtwocstnearestexp \tau$ and $\lmtwocstnearestfinineq = \lmtwocstmaxf / ( \sqrt{2} \lmtwocstnearestexp )$, this reads
			$\overline{\mu}_{k + 1} \leq ( 1 + \overline{\tau} \overline{\mu}_k ) \overline{\mu}_k + \lmtwocstnearestfinineq \overline{\tau}$, so that \lemref{prop:lemma_RogTsikl2012} gives
			\begin{equation}\label{eq:lemma2_nonlinear_unif_est_lamb}
				\overline{\mu}_k \leq \frac{\overline{\mu}}{1 - \overline{\mu} ( 1 + \lmtwocstnearestfinineq ) \overline{t}_k} \quad \text{for} \quad k = 1,2,\ldots,m\,,
			\end{equation}
			where 
			\begin{equation*}
				\overline{\mu} = \max \{ 1, \overline{\mu}_1 \} = \max \Biggl\{ 1, \frac{\sqrt{\hat{E}_1}}{1 + \lmtwocstfracone \tau} \Biggr\} \quad \text{and} \quad \overline{t}_k = k \overline{\tau} = 2 \lmtwocstnearestexp t_k < \frac{1}{\overline{\mu} ( 1 + \lmtwocstnearestfinineq )}\,.
			\end{equation*}
			With the foregoing notation, we have
			\begin{equation}\label{eq:lemma2_nonlinear_est_lam_sqer_E}
				\overline{\mu}_k = \frac{\mu_k}{( 1 + \lmtwocstfracone \tau )^k} \geq \frac{\sqrt{\overline{E}_k}}{e^{\lmtwocstfracone t_k}}\,.
			\end{equation}
			By combining inequality \eqref{eq:lemma2_nonlinear_unif_est_lamb} with \eqref{eq:lemma2_nonlinear_est_lam_sqer_E}, we obtain the following estimate:
			\begin{equation}\label{eq:lemma2_nonlinear_max_ineq_bef_fin}
				\max_{1 \leq i \leq k} \hat{E}_i \leq \frac{\overline{\mu}^2}{\left[ 1 - \sqrt{2} \left( \lmtwocstmaxf + \sqrt{2} \lmtwocstnearestexp \right) \overline{\mu} t_k \right]^2} e^{2 \lmtwocstfracone t_k} \quad \text{for} \quad k = 1,2,\ldots,m\,.
			\end{equation}
			The range $m$ in \eqref{eq:lemma2_nonlinear_max_ineq_bef_fin} is governed by the coefficient of $t_k$ in the denominator, which admits an explicit bound in terms of the data: $\overline{\mu} = \max \{ 1,\overline{\mu}_1 \} \leq \overline{M}$, since $\overline{\mu}_1$ is governed by $\hat{E}_1$, which contains $\hat{V}_1 = \tfrac12 \| \L u_1 \|^2$ with $u_1 = u_0 + \tau v_0 + \tfrac{\tau^2}{2} w_0$ and $w_0 = f_0 - q_0 \L u_0$, so that $\overline{M}$ depends only on $\| \L u_0 \|$, $\| \L^2 u_0 \|$, $\| \L v_0 \|$ and $\| \L f_0 \|$, the quantities supplied by~\eqref{eq:lemma2_data_regularity}. Hence
			\begin{equation}\label{eq:lemma2_nonlinear_max_ineq_fin}
				\max_{1 \leq i \leq k} \hat{E}_i \leq \tfrac{\overline{M}^2}{\left( 1 - \tilde{M} \overline{T} \right)^2} e^{2 \lmtwocstfracone t_k} \quad \text{for} \quad k=1,2,\ldots,\Bigl[ \frac{\overline{T}}{\tau} \Bigr]\,,
			\end{equation}
			with $\tilde{M} = \sqrt{2} \left( \lmtwocstmaxf + \sqrt{2} \lmtwocstnearestexp \right) \overline{M}$ and $\overline{T} = \min \{ T, \varrho / \tilde{M} \}$, where $0 < \varrho < 1$. The minimum keeps $[ \overline{T} / \tau ]$ inside the grid and only shrinks $\overline{T}$, preserving $\tilde{M} \overline{T} < 1$. %
			Thus, \eqref{eq:lemma2_nonlinear_max_ineq_fin} ensures the local uniform boundedness of the sequences of functions $\L^{1/2} \left( \hat{\Delta} u_{k - 1} / \tau \right)$ and $\L u_k$ on the time interval~$\left[ 0,\overline{T} \right]$.
		\end{proof}
		
		\subsection{Convergence Analysis}
		\noindent Writing problem \eqref{eq:main_kirchhoff}--\eqref{eq:initial_conds} with the operator $\L$ at the discrete time levels $t = t_k$, $k = 1,2,\ldots,n - 1$, and rearranging it into the form \eqref{eq:nonlinear_scheme_extended} yields the time-discrete formulation 
		\begin{multline}\label{eq:converg_exact_repres}
			\frac{\hat{\Delta}^2 u \left( t_{k - 1} \right )}{\tau^2} + \tfrac{1}{2}\left[ \alpha_k + \beta_k \left( V \left( t_{k + 1} \right) + V \left( t_{k - 1} \right) \right) \right] \left( \L u \left( t_{k + 1} \right ) + \L u \left( t_{k - 1} \right ) \right) \\
			= f_k + R_{1,k} ( \tau ) + R_{2,k} ( \tau ) + R_{3,k} ( \tau )\,,
		\end{multline}
		where the remainder terms are given by
		\begin{align*}
			R_{1,k} ( \tau ) &= \frac{\hat{\Delta}^2 u \left( t_{k - 1} \right )}{\tau^2} - \frac{\partial^2 u( t_k )}{\partial t^2}\,, \\
			R_{2,k} ( \tau ) &= \left( \tfrac{\alpha_k}{2} + \beta_k V ( t_k ) \right) \hat{\Delta}^2 \left( \L u \left( t_{k - 1} \right ) \right)\,, \\
			R_{3,k} ( \tau ) &= \tfrac{\beta_k}{2} \left( \L u \left( t_{k + 1} \right ) + \L u \left( t_{k - 1} \right ) \right) \hat{\Delta}^2 \left( V \left( t_{k - 1} \right) \right)\,.
		\end{align*}
		Here $V ( t ) = \tfrac12 \| \L^{1/2} u ( t ) \|^2$, as in \eqref{eq:diff_energy}. %
		Subtracting equation \eqref{eq:converg_exact_repres} from equation \eqref{eq:nonlinear_scheme_extended} and, after a straightforward transformation, for the error of the approximate solution $z_k = u ( t_k ) - u_k$, we derive the following equation:
		\begin{equation}\label{eq:converg_error_equation}
			\frac{\hat{\Delta}^2 z_{k - 1}}{\tau^2} + \tfrac{1}{2} \left[ \alpha_k + \beta_k \left( V_{k + 1} + V_{k - 1} \right) \right] \left( \L z_{k + 1} + \L z_{k - 1} \right) = \tilde{g}_k + R_k ( \tau )\,.
		\end{equation}
		Here,
		\begin{align*}
			\tilde{g}_k &= \tfrac{\beta_k}{2} \left[ \left( V_{k + 1} - V \left( t_{k + 1} \right) \right) + \left( V_{k - 1} - V \left( t_{k - 1} \right) \right) \right] \left( \L u \left( t_{k + 1} \right ) + \L u \left( t_{k - 1} \right ) \right)\,, \\
			R_k ( \tau ) &= R_{1,k} ( \tau ) + R_{2,k} ( \tau ) + R_{3,k} ( \tau )\,.
		\end{align*}
		
        The theorem below requires regularity of the solution \emph{beyond} what is needed for the solution to solve the PDE. It states exactly how much such regularity is assumed, not derived. Two additional degrees of smoothness, together with the starting value \eqref{eq:starting_vec1}, give the second order in time, which is the accuracy of the scheme \eqref{eq:nonlinear_semidiscrete} and cannot be exceeded. Throughout, subscripts on $u$ other than the time index denote partial derivatives with respect to the corresponding variables. 
		
		\begin{theorem}\label{prop:conver_thm_nonlinear}
			Assume that the initial–boundary value problem \eqref{eq:main_kirchhoff}--\eqref{eq:initial_conds} is well posed, and let $\left\{ u_k \right\}$ be a solution sequence of the scheme \eqref{eq:nonlinear_semidiscrete} on the interval considered. Suppose moreover that the exact solution $u$ satisfies the following regularity assumptions.
			\begin{enumerate}[label=(\roman*)]
				\item\label{itm_theorem_i} Assume $u_0 \in D ( \L_0 )$, $v_0, w_0 \in C^1 ( \overline{\domain} )$, that $u_x$ and $u_y$ have continuous second temporal derivatives, and that for every $t \in [ 0,T ]$ the source satisfies $f ( t ) \in C^1 ( \overline{\domain} )$ with $f ( t ) |_{\partial \domain} = 0$, its spatial derivatives being bounded uniformly in $t$. Beyond this, impose the operator-domain regularity~\eqref{eq:lemma2_data_regularity} of \lemref{prop:lemma2_nonlinear}, which is what the \apriori{} bounds used below require and which is \emph{not} implied by the conditions just listed; it also places $u_0, v_0, w_0$ and $f( t )$ in the domains of the fractional powers of $\L$, so that the starting value~\eqref{eq:starting_vec1} satisfies the homogeneous Dirichlet condition $u_1 |_{\partial \domain} = 0$ that the scheme imposes --- a compatibility requirement which is not automatic.
				\item\label{itm_theorem_ii} Let the solution $u ( t )$ of the initial–boundary value problem \eqref{eq:main_kirchhoff}--\eqref{eq:initial_conds} be three times continuously differentiable with respect to the temporal variable $t$. Furthermore, the third-order temporal derivative $u_{ttt} ( t )$ is Lipschitz continuous in $t$ as a map into $L^2 ( \domain )$.
				\item\label{itm_theorem_iii} Assume that the second-order spatial derivatives $u_{xx} ( t )$ and $u_{yy} ( t )$ are continuously differentiable with respect to the temporal variable $t$. Moreover, the mixed partial derivatives $u_{xxt} ( t )$ and $u_{yyt} ( t )$ are Lipschitz continuous in $t$, again as maps into $L^2 ( \domain )$. Naming the spatial norm matters: the remainder bounds \eqref{eq:converg_error_remainders} are uniform in $t$ only if the Lipschitz constants are, and an unqualified statement leaves open in which norm that uniformity is meant.
			\end{enumerate}
			There exists $\overline{T} \in \left( 0,T \right]$ such that the error of the approximate solution, defined by $z_k = u ( t_k ) - u_k$, satisfies the following estimates:
			\begin{equation*}
				\max_{1 \leq k \leq m}\left\| \nabla z_k \right\| \leq \cststatementone \tau^2 \quad \text{and} \quad \max_{1 \leq k \leq m} \bigl\| \frac{\hat{\Delta} z_{k - 1}}{\tau} \bigr\| \leq \cststatementtwo \tau^2\,,
			\end{equation*}
			in which $m = \left[ \overline{T} / \tau \right]$, and $\hat{\Delta} z_{k - 1} = z_k - z_{k - 1}$.
		\end{theorem}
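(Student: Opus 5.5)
The plan is an energy argument on the error equation \eqref{eq:converg_error_equation}, modelled on the proof of \lemref{prop:lemma1_nonlinear}. Throughout, the a priori bounds of \lemref{prop:lemma1_nonlinear} and \lemref{prop:lemma2_nonlinear} control the coefficients. This restricts everything to $k\le m=[\overline{T}/\tau]$, which is where the local interval $[0,\overline{T}]$ in the statement comes from.

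\textbf{Step 1: remainder bounds.} Using \ref{itm_theorem_ii} and \ref{itm_theorem_iii}, Taylor expansion with the Lipschitz continuity of $u_{ttt}$, $u_{xxt}$, $u_{yyt}$ gives
\[
\|R_{1,k}\|\le c\tau^2,\qquad \|R_{2,k}\|\le c\tau^2 .
\]
For $R_{3,k}$, $\hat\Delta^2 V(t_{k-1})=O(\tau^2)$ because $V\in C^2$, which follows from $u_x,u_y\in C^2$ in time by \ref{itm_theorem_i}. I also need $\|R_k-R_{k-1}\|\le c\tau^3$, or equivalently a bound on $\|(R_k-R_{k-1})/\tau\|$ of order $\tau^2$. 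This is used for summation by parts in Step 3, and it is the reason the Lipschitz hypotheses on third derivatives appear. For the start, \eqref{eq:starting_vec1} gives $z_0=0$ and $z_1=O(\tau^3)$ in $H_0^1$, and likewise $\|\hat\Delta z_0/\tau\|=O(\tau^2)$ and $\|\L^{1/2}z_1\|=O(\tau^3)$.

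\textbf{Step 2: discrete error energy.} I test \eqref{eq:converg_error_equation} with $z_{k+1}-z_{k-1}$. Write $p_k=\tfrac12[\alpha_k+\beta_k(V_{k+1}+V_{k-1})]$, which satisfies $\alpha_{\min}/2\le p_k\le C$ by \lemref{prop:lemma1_nonlinear}. Set
\[
\varepsilon_k=\tfrac12\|\hat\Delta z_{k-1}/\tau\|^2+\tfrac{p_{k-1}}{2}\bigl(\|\L^{1/2}z_k\|^2+\|\L^{1/2}z_{k-1}\|^2\bigr).
\]
Exactly as in \eqref{eq:lemma1_nonlinear_transf_eq}, this gives
\[
\varepsilon_{k+1}=\varepsilon_k+(p_k-p_{k-1})(\cdots)+\tau\bigl(\tilde g_k+R_k,\,(\hat\Delta z_k+\hat\Delta z_{k-1})/\tau\bigr).
\]
The coefficient-variation term $p_k-p_{k-1}$ is $O(\tau)(1+\sqrt{\hat{\mathcal K}_{k+1}}+\sqrt{\hat{\mathcal K}_{k-1}})$, as in \eqref{eq:lemma2_nonlinear_diff_beta_w}. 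Here \lemref{prop:lemma2_nonlinear} bounds $\hat{\mathcal K}$ uniformly, so this term is $\le c\tau\varepsilon_k$. The remainder contribution is $\le c\tau^3(\sqrt{\varepsilon_{k+1}}+\sqrt{\varepsilon_k})$.

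\textbf{Step 3: the term $\tilde g_k$ (main obstacle).} The factor $V_{k\pm1}-V(t_{k\pm1})$ equals $-\tfrac12(\L^{1/2}(u_{k\pm1}+u(t_{k\pm1})),\L^{1/2}z_{k\pm1})$, so it is bounded by $c\|\L^{1/2}z_{k\pm1}\|$. The other factor $\L u(t_{k+1})+\L u(t_{k-1})$ is bounded in $L^2$ by regularity. Hence
\[
\|\tilde g_k\|\le c\bigl(\|\L^{1/2}z_{k+1}\|+\|\L^{1/2}z_{k-1}\|\bigr)\le c\bigl(\sqrt{\varepsilon_{k+1}}+\sqrt{\varepsilon_k}\bigr),
\]
and pairing with the velocity gives $\le c\tau(\varepsilon_{k+1}+\varepsilon_k)$. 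This term is only linear in the error, so it needs no nonlinear Gr\"onwall argument. The delicate points are two. First, the $R_k$ pairing must really give order $\tau^3$ per step; the plain Cauchy--Schwarz bound above already suffices, since $\|R_k\|=O(\tau^2)$ and the test velocity is measured in $\varepsilon$. Second, the implicit appearance of $\varepsilon_{k+1}$ on the right must be handled. For the latter I take $\tau$ small so that $1-c\tau\ge\tfrac12$ and absorb it.

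\textbf{Step 4: closing the recursion.} This yields
\[
\varepsilon_{k+1}\le(1+c\tau)\varepsilon_k+c\tau^3\bigl(\sqrt{\varepsilon_{k+1}}+\sqrt{\varepsilon_k}\bigr).
\]
I then apply the maximal-index device from the proof of \lemref{prop:lemma1_nonlinear}: take $\varepsilon_j=\max_{i\le k+1}\varepsilon_i$, iterate, and divide by $\sqrt{\varepsilon_j}$. This gives
\[
\sqrt{\varepsilon_{k+1}}\le e^{ct_k}\bigl(\sqrt{\varepsilon_1}+c\,t_k\tau^2\bigr)=O(\tau^2)\quad\text{for } k\le m,
\]
using $\sqrt{\varepsilon_1}=O(\tau^2)$ from Step 1. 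Finally, $p_{k-1}\ge\alpha_{\min}/2$, and $\|\nabla z_k\|=\|\L^{1/2}z_k\|$ on $H_0^1$. These two facts convert the bound on $\varepsilon_{k+1}$ into both stated estimates, with $\overline{T}$ the one supplied by \lemref{prop:lemma2_nonlinear}.
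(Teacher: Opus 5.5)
Your proposal is correct and is essentially the paper's proof. It uses the same test function $z_{k+1}-z_{k-1}$ and the same error energy (your $\varepsilon_k$ coincides with the paper's $\tilde{E}_k$). It uses \lemref{prop:lemma1_nonlinear} and \lemref{prop:lemma2_nonlinear} to bound the coefficient variation and $\tilde{g}_k$ linearly in the error, Cauchy--Schwarz for $R_k$, and the maximal-index device to close. The only difference is cosmetic: you absorb the $\tilde{g}_k$ term as $c\tau(\varepsilon_{k+1}+\varepsilon_k)$ before iterating, whereas the paper sums first and then unrolls a linear recursion for $\sqrt{\tilde{E}_k}$.

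Your Step~1 claims $\|R_k-R_{k-1}\|\le c\tau^3$ and $\|\L^{1/2}z_1\|=O(\tau^3)$ are not supplied by the hypotheses; the Lipschitz assumptions are what give $\|R_{i,k}\|=O(\tau^2)$ in the first place. Neither claim is used, however: you drop the summation by parts, and $\sqrt{\varepsilon_1}=O(\tau^2)$ only needs $\|\L^{1/2}z_1\|=O(\tau^2)$.
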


        \begin{remark}
            Under the regularity assumptions \ref{itm_theorem_ii} and \ref{itm_theorem_iii} on the solution $u ( t )$ to problem \eqref{eq:main_kirchhoff}--\eqref{eq:initial_conds} imposed in \thmref{prop:conver_thm_nonlinear}, the following estimates for the remainder terms hold (see, \eg, \cite{RogTsikl2014,RogVashZAMM2024}):
		\begin{equation}\label{eq:converg_error_remainders}
			\left\| R_{i,k} ( \tau ) \right\| = \bigO ( \tau^2 )\,,\quad i = 1,2,3\,.
		\end{equation}
		The norm here is the $L^2 ( \domain )$-norm used throughout, which is the only one the energy argument requires: \eqref{eq:converg_error_remainders} enters exactly once, in \eqref{eq:conver_thm_nonlinear_inner_prod_rem}, and there only through the inner product of $R_k ( \tau )$ with a difference of iterates. The cited works state the corresponding bounds in $L^\infty ( \domain )$; since $\domain$ is bounded, those imply the form used here, and stating the weaker form is what allows the Lipschitz hypotheses of \ref{itm_theorem_ii} and \ref{itm_theorem_iii} to be imposed in $L^2 ( \domain )$ rather than uniformly in space.
        \end{remark}

        \begin{proof}[Proof of \Cref{prop:conver_thm_nonlinear}]
            For each $k = 1,2,\ldots,n - 1$, taking the inner product of both sides of equation \eqref{eq:converg_error_equation} with $z_{k + 1} - z_{k - 1} = \hat{\Delta} z_k + \hat{\Delta} z_{k - 1}$ yields
            \begin{multline*}
                \tilde{T}_{k + 1} + \tfrac{1}{2} \left[ \alpha_k + \beta_k \left( V_{k + 1} + V_{k - 1} \right) \right] \tilde{V}_{k + 1} \\
                = \tilde{T}_k + \tfrac{1}{2} \left[ \alpha_k + \beta_k \left( V_{k + 1} + V_{k - 1} \right) \right] \tilde{V}_{k - 1} + \tfrac{1}{2} ( \tilde{g}_k,\hat{\Delta} z_k + \hat{\Delta} z_{k - 1} ) + \tfrac{1}{2} ( R_k ( \tau ),\hat{\Delta} z_k + \hat{\Delta} z_{k - 1} )\,,
            \end{multline*}
            where
            \begin{equation*}
                \tilde{T}_k = \tfrac{1}{2} \Bigl\| \frac{\hat{\Delta} z_{k - 1}}{\tau} \Bigr\|^2\,, \quad \tilde{V}_k = \tfrac{1}{2} \| \L^{1/2} z_k \|^2\,, \quad \text{and} \quad V_k = \tfrac{1}{2} \| \L^{1/2} u_k \|^2\,.
            \end{equation*}
            By adding the same term $\left[ \alpha_k + \beta_k \left( V_{k + 1} + V_{k - 1} \right) \right] \tilde{V}_k / 2$ to both sides of the preceding equation, and defining additional notation $W_k = V_{k + 1} + V_{k - 1} \text{and} \quad \tilde{W}_k = \frac12({\tilde{V}_k + \tilde{V}_{k - 1}})$, we find that
            \begin{multline*}
                \tilde{T}_{k + 1} + ( \alpha_k + \beta_k W_k ) \tilde{W}_{k + 1} \\
                = \tilde{T}_k + ( \alpha_k + \beta_k W_k ) \tilde{W}_k + \tfrac{1}{2} ( \tilde{g}_k,\hat{\Delta} z_k + \hat{\Delta} z_{k - 1} ) + \tfrac{1}{2} ( R_k ( \tau ),\hat{\Delta} z_k + \hat{\Delta} z_{k - 1} )\,.
            \end{multline*}
            Hence, by standard algebraic manipulations, one obtains the following expression:
            \begin{align}\label{eq:conver_thm_nonlinear_tilde_e_expr}
                &\tilde{E}_{k + 1} = \tilde{E}_k + \left( \alpha_k - \alpha_{k - 1} \right) \tilde{W}_k + \left( \beta_k W_k - \beta_{k - 1} W_{k - 1} \right) \tilde{W}_k\nonumber \\
                &\qquad + \tfrac{1}{2} ( \tilde{g}_k,\hat{\Delta} z_k + \hat{\Delta} z_{k - 1} ) + \tfrac{1}{2} ( R_k ( \tau ),\hat{\Delta} z_k + \hat{\Delta} z_{k - 1} )\,,
            \end{align}
            where $\tilde{E}_k = \tilde{T}_k + \left( \alpha_{k - 1} + \beta_{k - 1} W_{k - 1} \right) \tilde{W}_k$.
            Assume, as in the proof of \lemref{prop:lemma2_nonlinear}, that $u_{-1} = u_0$. The first estimate of \lemref{prop:lemma2_nonlinear} together with bound \eqref{eq:lemma2_nonlinear_diff_W} implies
            \begin{equation}\label{eq:conver_thm_nonlinear_diff_w}
                \left| W_k - W_{k - 1} \right| \leq \sqrt{2} \cstmtwo \tau \Bigl( \sqrt{\hat{\mathcal{K}}_{k + 1}} + \sqrt{\hat{\mathcal{K}}_{k - 1}} \Bigr) \leq 2 \cstmtwo \cstmthree \tau\,.
            \end{equation}
            From inequalities \eqref{eq:lemma1_nonlinear_beta_est} and \eqref{eq:conver_thm_nonlinear_diff_w}, together with the fact that $W_k = V_{k + 1} + V_{k - 1} \leq \cstmtwo^2$ provided by \lemref{prop:lemma1_nonlinear}, it follows that
            \begin{align}\label{eq:conver_thm_nonlinear_beta_w_diff}
                \left| \beta_k W_k - \beta_{k - 1} W_{k - 1} \right| &\leq \left| \beta_k - \beta_{k - 1} \right| W_k + \beta_{k - 1} \left| W_k - W_{k - 1} \right|\nonumber \\
                & \leq \thmcstdiffbetaw \tau\,, \quad \text{where} \quad \thmcstdiffbetaw = \cstmtwo \left( \lmonecstbeta \cstmtwo + 2 \cstmthree \beta_{\max} \right)\,.
            \end{align}
            Condition \ref{itm_theorem_iii} of \thmref{prop:conver_thm_nonlinear} implies the bounds
            \begin{equation}\label{eq:conver_thm_nonlinear_norm_ineq_half_l}
                \| \L^{1/2} u ( t ) \| \leq \thmcsthalflnorm \quad \text{and} \quad \left\| \L u ( t ) \right\| \leq \thmcstlnorm\,.
            \end{equation}
            Combining the first inequality in \eqref{eq:conver_thm_nonlinear_norm_ineq_half_l} with \lemref{prop:lemma1_nonlinear} yields the following estimate for the difference:
            \begin{equation}\label{eq:conver_thm_nonlinear_diff_v}
                \left| V_k - V ( t_k ) \right| \leq \Bigl( \sqrt{V_k} + \sqrt{V ( t_k )} \Bigr) \sqrt{\tilde{V}_k} \leq \tfrac{1}{\sqrt{2}} ( \cstmtwo + \thmcsthalflnorm ) \sqrt{\tilde{V}_k}\,.
            \end{equation}
            Applying first the Cauchy–Schwarz inequality and subsequently the triangle inequality, together with bounds \eqref{eq:conver_thm_nonlinear_norm_ineq_half_l} and \eqref{eq:conver_thm_nonlinear_diff_v}, one obtains the following estimate for the inner product appearing in \eqref{eq:conver_thm_nonlinear_tilde_e_expr}, which is associated with $\tilde{g}_k$. Thus, we find that
            \begin{align*}
                \tfrac{1}{2} | ( \tilde{g}_k,\hat{\Delta} z_k + \hat{\Delta} z_{k - 1} ) | &\leq \tfrac{1}{2} \| \tilde{g}_k \| \| \hat{\Delta} z_k + \hat{\Delta} z_{k - 1} \| \\
                &\leq \tfrac{\beta_k}{2} ( \cstmtwo + \thmcsthalflnorm ) \thmcstlnorm \tau \Bigl( \sqrt{\tilde{V}_{k + 1}} + \sqrt{\tilde{V}_{k - 1}} \Bigr) \Bigl( \sqrt{\tilde{T}_{k + 1}} + \sqrt{\tilde{T}_k} \Bigr) \\
                &\leq \tfrac{\beta_k}{2} ( \cstmtwo + \thmcsthalflnorm ) \thmcstlnorm \tau \Bigl( \sqrt{\tilde{V}_{k + 1}} + \sqrt{\tilde{V}_{k - 1}} \Bigr) \Bigl( \sqrt{\tilde{E}_{k + 1}} + \sqrt{\tilde{E}_k} \Bigr)\,.
            \end{align*}
            By the definition of $\tilde{E}_k$ we have $\tilde{E}_k \geq \alpha_{k - 1} \tilde{W}_k \geq \tfrac{\alpha_{\min}}{2} \tilde{V}_k$, hence $\tilde{V}_k \leq 2 \tilde{E}_k / \alpha_{\min}$ and $\tilde{W}_k \leq \tilde{E}_k / \alpha_{\min}$, whence
            \begin{equation}\label{eq:conver_thm_nonlinear_inner_prod_g}
                \tfrac{1}{2} | ( \tilde{g}_k,\hat{\Delta} z_k + \hat{\Delta} z_{k - 1} ) | \leq \thmcstinnerprodgk \tau \Bigl( \sqrt{\tilde{E}_{k + 1}} + \sqrt{\tilde{E}_{k - 1}} \Bigr) \Bigl( \sqrt{\tilde{E}_{k + 1}} + \sqrt{\tilde{E}_k} \Bigr)\,,
            \end{equation}
            where $\thmcstinnerprodgk = \frac{( \cstmtwo + \thmcsthalflnorm ) \thmcstlnorm}{\sqrt{2 \alpha_{\min}}} \beta_{\max}. $
            It remains to estimate the inner product in \eqref{eq:conver_thm_nonlinear_tilde_e_expr} corresponding to the remainder term. To this end, we argue as in the derivation of \eqref{eq:conver_thm_nonlinear_inner_prod_g} and invoke inequality \eqref{eq:converg_error_remainders}. Consequently, we have
            \begin{align}\label{eq:conver_thm_nonlinear_inner_prod_rem}
                \tfrac{1}{2} | ( R_k ( \tau ),\hat{\Delta} z_k + \hat{\Delta} z_{k - 1} ) | &\leq \tfrac{1}{2} \left\| R_k ( \tau ) \right\| ( \| \hat{\Delta} z_k \| + \| \hat{\Delta} z_{k - 1} \| )\nonumber \\
                &\leq \thmcstinnerprodrem \tau^3 \bigl( \sqrt{\tilde{E}_{k + 1}} + \sqrt{\tilde{E}_k} \bigr)\,.
            \end{align}
            Taking into account inequalities \eqref{eq:lemma1_nonlinear_alpha_est}, \eqref{eq:conver_thm_nonlinear_beta_w_diff}, \eqref{eq:conver_thm_nonlinear_inner_prod_g}, and \eqref{eq:conver_thm_nonlinear_inner_prod_rem} in relation \eqref{eq:conver_thm_nonlinear_tilde_e_expr}, we obtain
            \begin{align*}
                &\tilde{E}_{k + 1} \leq \tilde{E}_k + ( \lmonecstalph + \thmcstdiffbetaw ) \tau \tilde{W}_k\nonumber \\
                &\qquad + \thmcstinnerprodgk \tau \Bigl( \sqrt{\tilde{E}_{k + 1}} + \sqrt{\tilde{E}_{k - 1}} \Bigr) \Bigl( \sqrt{\tilde{E}_{k + 1}} + \sqrt{\tilde{E}_k} \Bigr) + \thmcstinnerprodrem \tau^3 \Bigl( \sqrt{\tilde{E}_{k + 1}} + \sqrt{\tilde{E}_k} \Bigr)\,.
            \end{align*}
            Inserting $\tilde{W}_k \leq \tilde{E}_k / \alpha_{\min}$ and setting
            \begin{equation*}
                \thmcstsumoverczero = \tfrac{\lmonecstalph + \thmcstdiffbetaw}{\alpha_{\min}}\,,\qquad \thmcstmaxtwoconst = \max \{ \thmcstinnerprodgk,\thmcstinnerprodrem \}\,,
            \end{equation*}
            the recurrence becomes $\tilde{E}_{k + 1} \leq \tilde{E}_k + \delta_k$ with
            $\delta_k = \thmcstsumoverczero \tau \tilde{E}_k + \thmcstmaxtwoconst \tau ( \sqrt{\tilde{E}_{k + 1}} + \sqrt{\tilde{E}_k} ) ( \tau^2 + \sqrt{\tilde{E}_{k + 1}} + \sqrt{\tilde{E}_{k - 1}} )$, and unrolling it from $1$ to $k$ gives
            \begin{equation*}
                \tilde{E}_{k + 1} \leq \tilde{E}_1 + \thmcstsumoverczero \tau \sum_{i = 1}^{k} \tilde{E}_i + \thmcstmaxtwoconst \tau \sum_{i = 1}^{k} \Bigl( \sqrt{\tilde{E}_{i + 1}} + \sqrt{\tilde{E}_i} \Bigr) \Bigl( \tau^2 + \sqrt{\tilde{E}_{i + 1}} + \sqrt{\tilde{E}_{i - 1}} \Bigr)\,.
            \end{equation*}
            Assume that $\tilde{E}_i$, for $i = 1,2,\ldots,k + 1$, attains its maximum at the index $j$. That is,
            \begin{equation*}
                \tilde{E}_j = \max_{1 \leq i \leq k + 1} \tilde{E}_i\,.
            \end{equation*}
            Under this assumption, if $j > 1$, then the following holds:
            \begin{equation*}
                \tilde{E}_j \leq \tilde{E}_1 + \thmcstsumoverczero \tau \sum_{i = 1}^{j - 1} \tilde{E}_i + \thmcstmaxtwoconst \tau \sum_{i = 1}^{j - 1} \Bigl( \sqrt{\tilde{E}_{i + 1}} + \sqrt{\tilde{E}_i} \Bigr) \Bigl( \tau^2 + \sqrt{\tilde{E}_{i + 1}} + \sqrt{\tilde{E}_{i - 1}} \Bigr)\,.
            \end{equation*}
            Dividing both sides of the preceding inequality by $\sqrt{\tilde{E}_j}$, we obtain ({\cf} \cite{RogTsikl2014,RogVashZAMM2024}):
            \begin{align*}
                \sqrt{\tilde{E}_j} &\leq \sqrt{\tilde{E}_1} + \thmcstsumoverczero \tau \sum_{i = 1}^{j - 1} \sqrt{\tilde{E}_i} + 2 \thmcstmaxtwoconst \tau \sum_{i = 1}^{j - 1} \Bigl( \tau^2 + \sqrt{\tilde{E}_{i + 1}} + \sqrt{\tilde{E}_{i - 1}} \Bigr) \\
                &\leq \sqrt{\tilde{E}_1} + \thmcstsumoverczero \tau \sum_{i = 1}^{k} \sqrt{\tilde{E}_i} + 2 \thmcstmaxtwoconst \tau \sum_{i = 1}^{k} \Bigl( \tau^2 + \sqrt{\tilde{E}_{i + 1}} + \sqrt{\tilde{E}_{i - 1}} \Bigr)\,.
            \end{align*}
            Since $\tilde{E}_{k + 1} \leq \tilde{E}_j$, we infer that
            \begin{equation}\label{eq:conver_thm_nonlinear_sqrt_tilde_E_k_ineq}
                \sqrt{\tilde{E}_{k + 1}} \leq \sqrt{\tilde{E}_1} + \thmcstsumoverczero \tau \sum_{i = 1}^{k} \sqrt{\tilde{E}_i} + 2 \thmcstmaxtwoconst \tau \sum_{i = 1}^{k} \Bigl( \tau^2 + \sqrt{\tilde{E}_{i + 1}} + \sqrt{\tilde{E}_{i - 1}} \Bigr)\,.
            \end{equation}
            For $j = 1$ the inequality is immediate, since then $\tilde{E}_{k+1} \leq \tilde{E}_1$ and the two sums on the right are non-negative.
            Rearranging \eqref{eq:conver_thm_nonlinear_sqrt_tilde_E_k_ineq}, bounding each of the three sums by the full sum $\sum_{i=1}^{k} \sqrt{\tilde{E}_i}$ and using $\tau^2 t_k \leq \overline{T} \tau^2$, one obtains
            \begin{equation*}
                ( 1 - 2 \thmcstmaxtwoconst \tau ) \sqrt{\tilde{E}_{k + 1}} \leq \Bigl( \sqrt{\tilde{E}_1} + 2 \thmcstmaxtwoconst \tau \sqrt{\tilde{E}_0} + 2 \overline{T} \thmcstmaxtwoconst \tau^2 \Bigr) + ( \thmcstsumoverczero + 4 \thmcstmaxtwoconst ) \tau \sum_{i = 1}^{k} \sqrt{\tilde{E}_i}\,.
            \end{equation*}
            Hence, provided that $\thmcstmaxtwoconst \tau < 1 / 2$ and setting the constants
            \begin{equation*}
                \tilde{\varepsilon} = \frac{\sqrt{\tilde{E}_1} + 2 \thmcstmaxtwoconst \tau \sqrt{\tilde{E}_0} + 2 \overline{T} \thmcstmaxtwoconst \tau^2}{1 - 2 \thmcstmaxtwoconst \tau} \quad \text{and} \quad \tilde{c} = \frac{\thmcstsumoverczero + 4 \thmcstmaxtwoconst}{1 - 2 \thmcstmaxtwoconst \tau}\,,
            \end{equation*}
            the following holds:
            \begin{equation}\label{eq:conver_thm_nonlinear_before_unrolling}
                \sqrt{\tilde{E}_{k + 1}} \leq \tilde{\varepsilon} + \tilde{c} \tau \sum_{i = 1}^{k} \sqrt{\tilde{E}_i}\,.
            \end{equation}
            Since $\sqrt{\tilde{E}_1} \leq \tilde{\varepsilon}$, repeated application of \eqref{eq:conver_thm_nonlinear_before_unrolling} gives
            \begin{equation}\label{eq:conver_thm_nonlinear_sqrt_E_k_plus}
                \sqrt{\tilde{E}_{k + 1}} \leq \left( 1 + \tilde{c} \tau \right)^k \tilde{\varepsilon} \leq e^{\tilde{c} t_k} \tilde{\varepsilon} \leq e^{\tilde{c} \overline{T}} \tilde{\varepsilon}\,.
            \end{equation}
            It remains to estimate $\tilde{\varepsilon}$. With $u_{-1} = u_0$ we have $\tilde{E}_0 = 0$, and $\tilde{E}_1 = \tilde{T}_1 + ( \alpha_0 + \beta_0 W_0 ) \tilde{W}_1$ by the definition of $\tilde{E}_k$. Under the regularity assumptions \ref{itm_theorem_i}–\ref{itm_theorem_iii},
            \begin{equation*}
                \tilde{T}_1 = \tfrac{1}{2} \Bigl\| \frac{\hat{\Delta} z_0}{\tau} \Bigr\|^2 = \frac{1}{2 \tau^2} \left\| u ( t_1 ) - u_1 \right\|^2 = \frac{1}{2 \tau^2} \Bigl\| \int\limits_{0}^{\tau} \!\! \int\limits_{0}^{t} \!\! \int\limits_{0}^{s} u_{ttt} ( \xi ) \mathrm{d}\xi \mathrm{d}s \mathrm{d}t \Bigr\|^2 \leq \thmcstesttone \tau^4\,,
            \end{equation*}
            and
            \begin{align*}
                \tilde{W}_1 = \tfrac{1}{2} \tilde{V}_1 &= \frac{1}{4} \| \L^{1/2} z_1 \|^2 = \tfrac{1}{4} \Bigl\| \int\limits_{0}^{\tau} \L^{1/2} \left( u_t ( t ) - u_t ( 0 ) \right) \mathrm{d}t - \tau^2 \tfrac12 \L^{1/2} w_0 \Bigr\|^2 \leq \thmcstestwone \tau^4\,.
            \end{align*}
            With \lemref{prop:lemma1_nonlinear} these give $\tilde{E}_1 \leq ( \thmcstesttone + ( \alpha ( 0 ) + \beta ( 0 ) \cstmtwo^2 ) \thmcstestwone ) \tau^4$, hence $\tilde{\varepsilon} \leq \thmtildevareps \tau^2$ and, by \eqref{eq:conver_thm_nonlinear_sqrt_E_k_plus}, $\sqrt{\tilde{E}_{k + 1}} \leq \thmtildeEkplusone \tau^2$. The estimates stated in \thmref{prop:conver_thm_nonlinear} follow.
        \end{proof}

        \begin{corollary}\label{prop:corollary_one}
            The error $z_k = u ( t_k ) - u_k$ associated with the approximate solutions of problem \eqref{eq:main_kirchhoff}--\eqref{eq:initial_conds} is second order with respect to the temporal grid spacing $\tau$ on a local interval $\left[ 0,\overline{T} \right]$, where $\overline{T} \leq T$. That is,
            \begin{equation*}
                \max_{1 \leq k \leq m} \left\| z_k \right\| = \bigO ( \tau^2 )\,, \quad \text{with} \quad m = \left[ \frac{\overline{T}}{\tau} \right]\,.
            \end{equation*}
        \end{corollary}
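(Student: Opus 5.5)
The corollary follows from the energy-norm bound of \thmref{prop:conver_thm_nonlinear}. We upgrade the control of $\nabla z_k$, or equivalently of $\L^{1/2} z_k$, to control of $z_k$ in $L^2 ( \domain )$ by the Poincaré-type inequality \eqref{eq:pos_def}. No new energy argument is needed; the only thing to check is that \eqref{eq:pos_def} applies to $z_k$.

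First, $z_k \in D ( \L^{1/2} ) = H_0^1 ( \domain )$ for $1 \leq k \leq m$. The exact solution vanishes on $\partial\domain$ by \eqref{eq:initial_conds}. The iterates $u_k$ lie in $D ( \L )$ by construction of the scheme \eqref{eq:nonlinear_scheme_extended}, and $u_1 |_{\partial\domain} = 0$ by the compatibility part of assumption \ref{itm_theorem_i}. Then \eqref{eq:pos_def} extends from $D ( \L_0 )$ to $D ( \L^{1/2} )$, because $D ( \L_0 )$ is a core for the closed form $a ( u,v ) = ( \nabla u,\nabla v )$ and both sides are continuous in the $H^1$-norm. This gives
\begin{equation*}
	\| z_k \|^2 \leq \tfrac{\ell_\domain^2}{2 \pi^2} \, ( \L^{1/2} z_k, \L^{1/2} z_k ) = \tfrac{\ell_\domain^2}{2 \pi^2} \, \| \nabla z_k \|^2 .
\end{equation*}
Here we use $\| \L^{1/2} z \| = \| \nabla z \|$ on $H_0^1 ( \domain )$, which holds by the definition of $\L$ as the Friedrichs extension associated with the Dirichlet form.

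Second, take the maximum over $1 \leq k \leq m$ and insert the first estimate of \thmref{prop:conver_thm_nonlinear}. This yields
\begin{equation*}
	\max_{1 \leq k \leq m} \| z_k \| \leq \tfrac{\ell_\domain}{\sqrt{2}\,\pi} \, \cststatementone \, \tau^2 ,
\end{equation*}
with the same $\overline{T}$ and $m = [ \overline{T} / \tau ]$ as in the theorem. The constant is independent of $\tau$ because $\cststatementone$ is; $\cststatementone$ depends only on $M_1,\dots,M_4$ from \lemref{prop:lemma1_nonlinear} and \lemref{prop:lemma2_nonlinear}, on the data, and on $\overline{T}$.

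An alternative route avoids \eqref{eq:pos_def}. Since $z_0 = 0$, we have $z_k = \sum_{i=1}^{k} \hat{\Delta} z_{i-1}$, so the second estimate of the theorem gives $\| z_k \| \leq t_k \cststatementtwo \tau^2 \leq \overline{T} \cststatementtwo \tau^2$. This is even simpler and could serve as a cross-check.

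The only mildly delicate step is justifying \eqref{eq:pos_def} on $H_0^1 ( \domain )$ rather than just on $D ( \L_0 )$, i.e.\ the density and closure argument above. The telescoping alternative sidesteps it entirely, so I would present that as the main line if any doubt arises.
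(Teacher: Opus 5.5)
Your proof is correct, but your main line is not the paper's. The paper's proof is exactly your ``alternative route'': it writes $z_k = z_0 + \tau \sum_{i=1}^{k} \hat{\Delta} z_{i-1}/\tau$, uses $z_0 = u(0) - u_0 = 0$, and applies the second (discrete-velocity) bound of \thmref{prop:conver_thm_nonlinear}. This gives $\| z_k \| \leq t_k \cststatementtwo \tau^2 \leq \overline{T} \cststatementtwo \tau^2$.

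Your primary route instead passes the first (energy-norm) bound through the coercivity inequality \eqref{eq:pos_def}. This yields $\max_{1 \leq k \leq m} \| z_k \| \leq \ell_\domain \cststatementone \tau^2 / (\sqrt{2} \pi)$.

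The telescoping argument is the more elementary of the two. It uses only the triangle inequality and the exact starting value $u_0 = u(0)$, and no spectral information about $\L$. Its only cost is the factor $t_k \leq \overline{T}$, which is harmless on the fixed local interval $[0, \overline{T}]$.

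Your route avoids that factor and does not use $z_0 = 0$. In exchange, its constant depends on the geometry of $\domain$ through the lower bound $2\pi^2/\ell_\domain^2$ for $\L$, and it needs the regularity bookkeeping you carry out. That bookkeeping can be shortened. You have already shown that $z_k \in D(\L)$, and the paper states that \eqref{eq:pos_def} remains valid for $\L$ on $D(\L)$, where $(\L z_k, z_k) = \| \L^{1/2} z_k \|^2 = \| \nabla z_k \|^2$. So the density argument extending \eqref{eq:pos_def} to $H_0^1(\domain)$, which you flag as the delicate step, is correct but not needed.
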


        \begin{proof}
        Immediate from $z_k = z_0 + \tau \sum_{i = 1}^{k} \hat{\Delta} z_{i - 1} / \tau$ and the second bound of \thmref{prop:conver_thm_nonlinear}.
        \end{proof}

        \begin{corollary}\label{prop:corollary_two}
            The error associated with the first-order time derivative of the solution $u ( t )$ to problem \eqref{eq:main_kirchhoff}--\eqref{eq:initial_conds} at the discrete time level $t_k = k \tau$, obtained by applying the central finite difference formula to its approximation $u_k$, is second-order accurate in time. More precisely, on a local time interval,
            \begin{equation*}
                \max_{1 \leq k \leq m - 1} \bigl\| u_t ( t_k ) - \frac{u_{k + 1} - u_{k - 1}}{2 \tau} \bigr\| = \bigO ( \tau^2 )\,.
            \end{equation*}
        \end{corollary}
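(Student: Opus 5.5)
The plan is to split the error into a pure temporal-consistency part and a part controlled by \thmref{prop:conver_thm_nonlinear}. For $1 \leq k \leq m-1$ write
\begin{equation*}
u_t ( t_k ) - \frac{u_{k + 1} - u_{k - 1}}{2 \tau} = \Bigl( u_t ( t_k ) - \frac{u ( t_{k + 1} ) - u ( t_{k - 1} )}{2 \tau} \Bigr) + \frac{z_{k + 1} - z_{k - 1}}{2 \tau}\,,
\end{equation*}
where $z_k = u ( t_k ) - u_k$. The triangle inequality in $L^2 ( \domain )$ then reduces the claim to bounding each bracket by $\bigO ( \tau^2 )$, uniformly in $k$.

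For the first bracket I will use the regularity assumption \ref{itm_theorem_ii}: $u$ is $C^3$ in time with values in $L^2 ( \domain )$. A Taylor expansion with integral remainder about $t_k$ gives
\begin{equation*}
u ( t_{k \pm 1} ) = u ( t_k ) \pm \tau u_t ( t_k ) + \tfrac{\tau^2}{2} u_{tt} ( t_k ) \pm \tfrac{1}{2} \int_0^{\tau} ( \tau - s )^2 u_{ttt} ( t_k \pm s ) \, \dd s\,.
\end{equation*}
In the difference the even terms cancel. What remains is $\tfrac{1}{4\tau}\int_0^\tau(\tau-s)^2\bigl(u_{ttt}(t_k+s)+u_{ttt}(t_k-s)\bigr)\dd s$, whose norm is at most $\tfrac{\tau^2}{6}\max_{[0,T]}\|u_{ttt}\|$. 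This maximum is finite because $u_{ttt}$ is continuous into $L^2$ on a compact interval.

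For the second bracket I write $z_{k+1}-z_{k-1} = \hat{\Delta} z_k + \hat{\Delta} z_{k-1}$, so that
\begin{equation*}
\Bigl\| \frac{z_{k + 1} - z_{k - 1}}{2 \tau} \Bigr\| \leq \tfrac{1}{2} \Bigl( \Bigl\| \frac{\hat{\Delta} z_k}{\tau} \Bigr\| + \Bigl\| \frac{\hat{\Delta} z_{k - 1}}{\tau} \Bigr\| \Bigr) \leq \cststatementtwo \tau^2\,.
\end{equation*}
The second estimate of \thmref{prop:conver_thm_nonlinear} applies to both quotients, since for $k\le m-1$ the indices $k$ and $k+1$ both lie in $\{1,\dots,m\}$. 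This index bookkeeping is the reason the range stops at $m-1$.

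Adding the two bounds gives the corollary with constant $\tfrac16\max\|u_{ttt}\|+\cststatementtwo$, on the same local interval $[0,\overline{T}]$ as the theorem. There is no real obstacle here: all the difficulty is already absorbed by the discrete velocity bound of \thmref{prop:conver_thm_nonlinear}. The only point needing care is to check that the Taylor remainder is taken in $L^2(\domain)$ and that $t_{k\pm1}\in[0,T]$, which holds since $k+1\le m\le n$.
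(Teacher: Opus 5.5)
Your proposal is correct and matches the paper's proof: you use the same splitting into the consistency error of the exact solution's central difference, which assumption (ii) controls, plus $\tfrac12\bigl(\hat{\Delta} z_k/\tau + \hat{\Delta} z_{k-1}/\tau\bigr)$, which the second estimate of the convergence theorem controls. Beyond what the paper writes out, you make explicit the remainder constant $\tfrac{\tau^2}{6}\max_{[0,T]}\|u_{ttt}\|$ and the index bookkeeping that explains why the range stops at $k \le m-1$, both of which the paper leaves implicit.
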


        \begin{proof}
        We observe that the error can be represented as the sum of the following two terms:
        \begin{equation}\label{eq:corol_deriv_approx_repres}
            \Bigl( u_t ( t_k ) - \frac{u \left( t_{k + 1} \right ) - u \left( t_{k - 1} \right )}{2 \tau} \Bigr) + \tfrac{1}{2} \Bigl( \frac{\hat{\Delta} z_k}{\tau} + \frac{\hat{\Delta} z_{k - 1}}{\tau} \Bigr)\,.
        \end{equation}
        The first term in expression \eqref{eq:corol_deriv_approx_repres} corresponds to the remainder term of the standard second-order central difference approximation of the first derivative and, under assumption \ref{itm_theorem_ii} imposed in \thmref{prop:conver_thm_nonlinear}, is indeed $\bigO ( \tau^2 )$. Furthermore, for the norm of the second term in \eqref{eq:corol_deriv_approx_repres}, it follows from \thmref{prop:conver_thm_nonlinear} that it is also $\bigO ( \tau^2 )$.
        \end{proof}

        Evaluating the total energy \eqref{eq:total_energy} of the homogeneous problem with constant coefficients at $t = t_k$ and replacing the kinetic part by the central difference computed from the nonlinear scheme \eqref{eq:nonlinear_semidiscrete_homog}, we obtain, for $k = 1,2,\ldots,n-1$, the discrete approximation
        \begin{equation*}
            E_k^{\ast} = \mathcal{K}_k^{\ast} + \alpha V_k + \beta V_k^2\,,\qquad \mathcal{K}_k^{\ast} = \tfrac{1}{2} \Bigl\| \frac{u_{k + 1} - u_{k - 1}}{2 \tau} \Bigr\|^2\,.
        \end{equation*}
        
        \begin{corollary}\label{cor:energy_approximation}
            For the homogeneous problem with constant coefficients $\alpha,\beta\in\mathbb{R}_+$, the difference between the total energy $E$ of \eqref{eq:total_energy} and its discrete approximation $E_k^\ast$ is of order $\bigO ( \tau^2 )$ with respect to the time-step size. Consequently,
            \begin{equation}\label{eq:rem_max_diff_tot_energy}
                \max_{1 \leq k \leq m - 1} \left| E ( t_k ) - E_k^{\ast} \right| = \bigO ( \tau^2 )\,.
            \end{equation}
        \end{corollary}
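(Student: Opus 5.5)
We split the energy difference into a kinetic part and a potential part,
\begin{equation*}
E ( t_k ) - E_k^{\ast} = \bigl( \mathcal{K} ( t_k ) - \mathcal{K}_k^{\ast} \bigr) + \alpha \bigl( V ( t_k ) - V_k \bigr) + \beta \bigl( V ( t_k ) - V_k \bigr) \bigl( V ( t_k ) + V_k \bigr)\,,
\end{equation*}
and bound each term by $\bigO ( \tau^2 )$ uniformly in $1 \leq k \leq m - 1$. The homogeneous constant-coefficient problem is a special case of \eqref{eq:main_kirchhoff}, with $f = 0$, $\alpha' = \beta' = 0$ and the scheme \eqref{eq:nonlinear_semidiscrete_homog} coinciding with \eqref{eq:nonlinear_semidiscrete}. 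Hence \lemref{prop:lemma1_nonlinear}, \lemref{prop:lemma2_nonlinear}, \thmref{prop:conver_thm_nonlinear} and \Cref{prop:corollary_two} all apply under the same regularity hypotheses, on the same local interval $[0,\overline{T}]$ with $m = [ \overline{T}/\tau ]$.

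\textbf{Potential part.} We use $| V ( t_k ) - V_k | \leq \bigl( \sqrt{V ( t_k )} + \sqrt{V_k} \bigr) \bigl| \sqrt{V ( t_k )} - \sqrt{V_k} \bigr|$ together with the reverse triangle inequality $\bigl| \| \L^{1/2} u ( t_k ) \| - \| \L^{1/2} u_k \| \bigr| \leq \| \L^{1/2} z_k \| = \| \nabla z_k \|$. This gives
\begin{equation*}
| V ( t_k ) - V_k | \leq \tfrac{1}{2} ( \thmcsthalflnorm + \cstmtwo ) \| \nabla z_k \| \leq \tfrac12 ( \thmcsthalflnorm + \cstmtwo ) \cststatementone \tau^2\,,
\end{equation*}
where we used the bound \eqref{eq:conver_thm_nonlinear_norm_ineq_half_l} on the exact solution, the bound \eqref{eq:unif_bound_half_lu} from \lemref{prop:lemma1_nonlinear}, and the first estimate of \thmref{prop:conver_thm_nonlinear}. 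The factor $V ( t_k ) + V_k$ is bounded by $\tfrac12 ( \thmcsthalflnorm^2 + \cstmtwo^2 )$. So the $\alpha$- and $\beta$-terms are $\bigO ( \tau^2 )$, with constants independent of $k$.

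\textbf{Kinetic part.} Write $a = u_t ( t_k )$ and $b = ( u_{k+1} - u_{k-1} ) / ( 2\tau )$. Then
\begin{equation*}
| \mathcal{K} ( t_k ) - \mathcal{K}_k^{\ast} | = \tfrac12 \bigl| \| a \|^2 - \| b \|^2 \bigr| \leq \tfrac12 ( \| a \| + \| b \| ) \| a - b \|\,.
\end{equation*}
\Cref{prop:corollary_two} gives $\| a - b \| = \bigO ( \tau^2 )$ for $1 \leq k \leq m - 1$; this is the reason the range stops at $m - 1$, since $u_{k+1}$ must lie within the interval where the bounds hold. For the prefactor, $\| a \|$ is bounded by the continuity of $u_t$ on $[0,T]$ (assumption \ref{itm_theorem_ii}). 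Also $\| b \| \leq \tfrac12 \bigl( \| \hat{\Delta} u_k / \tau \| + \| \hat{\Delta} u_{k-1} / \tau \| \bigr) \leq \cstmone$ by \lemref{prop:lemma1_nonlinear}. Alternatively, $\| b \| \leq \| a \| + \| a - b \|$, which is bounded for small $\tau$.

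Summing the three contributions and taking the maximum over $k$ yields \eqref{eq:rem_max_diff_tot_energy}. The only delicate point is bookkeeping. One must check that the index range $k \leq m - 1$ keeps both $u_{k+1}$ and $z_{k+1}$ inside the range covered by \thmref{prop:conver_thm_nonlinear}. One must also check that the constants used ($\cstmone$, $\cstmtwo$, $\thmcsthalflnorm$, the bound on $u_t$) are $\tau$-independent. There is no genuine analytic obstacle: the corollary is a direct consequence of the earlier error bounds via the elementary identity $x^2 - y^2 = ( x + y ) ( x - y )$ applied to both energy components.
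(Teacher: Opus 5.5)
Your proposal is correct and follows the paper's proof essentially line by line. You use the same split into a kinetic part and a potential part, the same factorisation $x^2-y^2=(x+y)(x-y)$ in both, the discrete central-difference velocity bounded via \lemref{prop:lemma1_nonlinear}, the central-difference error via \Cref{prop:corollary_two}, and $|V(t_k)-V_k|$ bounded by $\tfrac12$ times the sum of the two energy-norm bounds times the error bound of \thmref{prop:conver_thm_nonlinear}. Your estimate $\tfrac12\bigl(\|\hat{\Delta}u_k/\tau\|+\|\hat{\Delta}u_{k-1}/\tau\|\bigr)$ is exactly the paper's $\tfrac{1}{\sqrt{2}}\bigl(\sqrt{\mathcal{K}_{k+1}}+\sqrt{\mathcal{K}_k}\bigr)$ written out.
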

        \begin{proof}
            It is evident that
            \begin{equation}\label{eq:rem_tot_ener_diff_ineq}
                \left| E ( t_k ) - E_k^{\ast} \right| \leq \left| \mathcal{K} ( t_k ) - \mathcal{K}_k^{\ast} \right| + \left( \alpha + \beta \left(  V ( t_k ) + V_k \right) \right) \left| V ( t_k ) - V_k \right|\,.
            \end{equation}
            By \lemref{prop:lemma1_nonlinear} and \corref{prop:corollary_two},
            \begin{align}\label{eq:rem_tot_diff_kinet}
                \left| \mathcal{K} ( t_k ) - \mathcal{K}_k^{\ast} \right| &= \tfrac{1}{2} \Bigl| \left\| u_t ( t_k ) \right\|^2 - \Bigl\| \frac{u_{k + 1} - u_{k - 1}}{2 \tau} \Bigr\|^2 \Bigr| \nonumber \\
                &\leq \tfrac{1}{2} \bigl( \left\| u_t ( t_k ) \right\| + \tfrac{1}{\sqrt{2}} \bigl( \sqrt{\mathcal{K}_{k + 1}} + \sqrt{\mathcal{K}_k} \bigr) \bigr) \Bigl\| u_t ( t_k ) - \frac{u_{k + 1} - u_{k - 1}}{2 \tau} \Bigr\| \leq \rmkthmcstkinetic \tau^2\,.
            \end{align}
            By \thmref{prop:conver_thm_nonlinear} and inequality \eqref{eq:conver_thm_nonlinear_diff_v}, it follows that
            \begin{equation}\label{eq:rem_tot_diff_potent}
                \left| V ( t_k ) - V_k \right| \leq \tfrac{1}{2} ( \cstmtwo + \thmcsthalflnorm ) \cststatementone \tau^2\,.
            \end{equation}
            Inserting \eqref{eq:rem_tot_diff_kinet} and \eqref{eq:rem_tot_diff_potent} into \eqref{eq:rem_tot_ener_diff_ineq} gives \eqref{eq:rem_max_diff_tot_energy}.
        \end{proof}
		
		\section{Solvability of a Nonlinear Equation Arising from Temporal Discretization}\label{sec:solvability}

        \subsection{Existence and Uniqueness of a Solution}\label{subsec:iteration_existence}

		To construct an iterative procedure for the Crank–Nicolson-type nonlinear scheme \eqref{eq:nonlinear_scheme_extended}, we recast it in the equivalent form 
		\begin{equation}\label{eq:nonlinear_equation_v}
			v_{k + 1} + \frac{\tau^2}{2}\bigl( \alpha_k + \tfrac{\beta_k}{2} \bigl( \| \L^{1/2} u_{k + 1} \|^2 + \| \L^{1/2} u_{k - 1} \|^2 \bigr) \bigr) \L v_{k + 1} = g_k\,,
		\end{equation}
		with
		\begin{equation*}
			v_{k + 1} = \tfrac12({u_{k + 1} + u_{k - 1}})\,,\quad g_k = \frac{\tau^2}{2} f_k + u_k\,.
		\end{equation*}
		\noindent The auxiliary variable $v_{k+1}$ introduced here is local to this section and
		is unrelated to the initial velocity $v_0$ of~\eqref{eq:initial_conds}; it is the
		semi-discrete counterpart of the quantity denoted $v_h^{n+1}$ in the fully discrete
		scheme of~\cite{KirchhoffFEM}, where the subscript $h$ keeps the two apart.
		Let the operator be defined by
		\begin{equation*}
			\T_k = \I + \frac{\tau^2}{2}\bigl( \alpha_k + \tfrac{\beta_k}{2} \bigl( \| \L^{1/2} u_{k + 1} \|^2 + \| \L^{1/2} u_{k - 1} \|^2 \bigr) \bigr) \L\,.
		\end{equation*}
		As a consequence, equation \eqref{eq:nonlinear_equation_v} can be written in the form:
		\begin{equation}\label{eq:operator_equation_v}
			\T_k v_{k + 1} = g_k\,,\quad g_k \in L^2 ( \domain )\,.
		\end{equation}
		To solve equation \eqref{eq:operator_equation_v}, the following iteration is used
		\begin{equation}\label{eq:oper_iter}
			\T_{k,\ell} v_{k + 1}^{( \ell + 1 )} = g_k\,,\quad k = 1,2,\ldots,n - 1\,,\quad \ell = 0,1,\ldots\,,
		\end{equation}
		where
		\begin{align*}
			\T_{k,\ell} &= \I + \frac{\tau^2}{2}\bigl( \alpha_k + \tfrac{\beta_k}{2} \bigl( \| \L^{1/2} u_{k + 1}^{( \ell )} \|^2 + \| \L^{1/2} u_{k - 1} \|^2 \bigr) \bigr) \L\,, \\
			v_{k + 1}^{( 0 )} &= \tfrac12({u_k + u_{k - 1}})\,,\quad u_{k + 1}^{( 0 )} = u_k\,,\quad u_{k + 1}^{( \ell )} = 2 v_{k + 1}^{( \ell )} - u_{k - 1}\,.
		\end{align*}
		\noindent The initial iterate $v_{k+1}^{(0)}$ is fixed here so that the relation $u_{k+1}^{(\ell)} = 2 v_{k+1}^{(\ell)} - u_{k-1}$ holds for every $\ell \geq 0$, including $\ell=0$; this makes the estimates below uniform in the iteration index.
		In the representation of the operator $\T_{k,\ell}$, the presence of the small parameter $\tau$ enables us to demonstrate the convergence of the iterative process \eqref{eq:oper_iter}. As an initial step, we establish the uniform boundedness of the sequence of vectors $w_{k + 1}^{( \ell + 1 )} = \L^{1/2} v_{k + 1}^{( \ell + 1 )}$ obtained from iteration \eqref{eq:oper_iter}. This property is important, as it provides the basis for proving the convergence of the iterative scheme.
		
		Defining $v_{k + 1}^{( \ell + 1 )}$ using equation \eqref{eq:oper_iter} and applying the operator $\L^{1/2}$, we obtain:
		\begin{equation}\label{eq:iter_w}
			w_{k + 1}^{( \ell + 1 )} = \T_{k,\ell}^{-1} \L^{1/2} g_k\,.
		\end{equation}
		Hence, we have:
		\begin{align}\label{eq:vec_w}
			\| w_{k + 1}^{( \ell + 1 )} \| &= \| \T_{k,\ell}^{-1} \L^{1/2} g_k \| = \bigl\| \T_{k,\ell}^{-1} \L^{1/2} \bigl( \frac{\tau^2}{2} f_k + u_k \bigr) \bigr\|\nonumber \\
			&\leq \frac{\tau^2}{2} \| \T_{k,\ell}^{-1} \L^{1/2} \| \| f_k \| + \| \T_{k,\ell}^{-1} \| \| \L^{1/2} u_k \|\,.
		\end{align}
		The following estimates are readily derived:
		\begin{equation}\label{eq:operator_ineq}
			\| \T_{k,\ell}^{-1} \| \leq 1\,,\quad \| \T_{k,\ell}^{-1} \L^{1/2} \| \leq \tau^{-1}\tfrac{1}{\sqrt{2 \alpha_{\min}}}\,,
		\end{equation}
		Here and below, operator expressions such as $\T_{k,\ell}^{-1} \L^{a}$ are understood through
		the spectral calculus of the self-adjoint, positive definite $\L$: since $\T_{k,\ell}$ is a
		positive function of $\L$, each such expression is the bounded multiplier
		$\lambda \mapsto \lambda^{a} ( 1 + \tfrac{\tau^2}{2} q \lambda )^{-1}$, and
		\eqref{eq:operator_ineq} is the supremum of that multiplier over the spectrum.

		Recall that the sequences $\L^{1/2} u_k$ and $\L u_k$ are uniformly bounded ({\cf} \lemref{prop:lemma1_nonlinear} and \lemref{prop:lemma2_nonlinear}). 
		If we substitute the bounds \eqref{eq:operator_ineq} and \eqref{eq:unif_bound_half_lu} into inequality \eqref{eq:vec_w}, and use that the temporal grid size satisfies $\tau = T / n \leq T$, we infer that:
		\begin{equation}\label{eq:unif_iter_vec}
			\| w_{k + 1}^{( \ell + 1 )} \| = \| \L^{1/2} v_{k + 1}^{( \ell + 1 )} \| \leq \cstmfive\,,\quad \cstmfive \coloneqq \tfrac{T}{2 \sqrt{2 \alpha_{\min}}} \max_{1 \leq i \leq n} \left\| f_i \right\| + \cstmtwo\,\quad ( \ell = 0,1,\ldots )\,.
		\end{equation}
		Thus, we show the uniform boundedness of the vectors $w_{k + 1}^{( \ell + 1 )}$ obtained from iteration \eqref{eq:oper_iter}. Since $\cstmfive \geq \cstmtwo$, the bound \eqref{eq:unif_iter_vec} also covers the initial iterate, $\| \L^{1/2} v_{k+1}^{(0)} \| \leq \cstmtwo \leq \cstmfive$, and therefore holds for all $\ell \geq 0$. Note further that the factor $\tau$ produced by \eqref{eq:operator_ineq} has deliberately been replaced by the larger, $\tau$-independent factor $T$: the constant $\cstmfive$, and every constant built from it below, must not depend on the step size, since they enter the step-size restriction of \thmref{thm:iteration_convergence}. As $\tau \leq T$, the bound is not affected.

		We next show that consecutive iterates contract. Note that this step, unlike the
		error estimate of \Cref{subsec:iteration_convergence} below, does not refer to a solution
		of \eqref{eq:operator_equation_v}. Applying $\L^{1/2}$ to \eqref{eq:oper_iter} at two consecutive
		iteration indices and subtracting, we find with \eqref{eq:iter_w} that
		\begin{align}\label{eq:iter_error_init}
			Y_{k + 1}^{( \ell + 1 )} &\coloneqq w_{k + 1}^{( \ell + 1 )} - w_{k + 1}^{( \ell )} = ( \T_{k,\ell}^{-1} - \T_{k,\ell - 1}^{-1} ) \L^{1/2} g_k = \T_{k,\ell}^{-1} ( \T_{k,\ell - 1} - \T_{k,\ell} ) \T_{k,\ell - 1}^{-1} \L^{1/2} g_k\nonumber \\
			&=  \tau^2\tfrac{\beta_k}{4} \T_{k,\ell}^{-1} ( \| \L^{1/2} u_{k + 1}^{( \ell - 1 )} \|^2 - \| \L^{1/2} u_{k + 1}^{( \ell )} \|^2 ) \L \T_{k,\ell - 1}^{-1} \L^{1/2} g_k\,.
		\end{align}
		Taking into account estimates \eqref{eq:unif_bound_half_lu} and \eqref{eq:unif_iter_vec} for the difference of squares appearing in the brackets of equation \eqref{eq:iter_error_init}, and using $u_{k+1}^{(\ell)} = 2 v_{k+1}^{(\ell)} - u_{k-1}$, we obtain the following inequality:
		\begin{align}\label{eq:bracket_term}
			&| \| \L^{1/2} u_{k + 1}^{( \ell - 1 )} \|^2 - \| \L^{1/2} u_{k + 1}^{( \ell )} \|^2 | \leq ( \| \L^{1/2} u_{k + 1}^{( \ell - 1 )} \| + \| \L^{1/2} u_{k + 1}^{( \ell )} \| ) \| \L^{1/2} ( u_{k + 1}^{( \ell - 1 )} - u_{k + 1}^{( \ell )} ) \|\nonumber \\
			&\qquad \leq ( 2\| \L^{1/2} v_{k + 1}^{( \ell - 1 )}  \| + 2\| \L^{1/2} v_{k + 1}^{( \ell )}  \| + 2\| \L^{1/2} u_{k - 1}  \| ) \cdot 2 \| \L^{1/2} ( v_{k + 1}^{( \ell - 1 )} - v_{k + 1}^{( \ell )} ) \|\nonumber \\
			&\qquad \leq 4 ( \cstmtwo + 2 \cstmfive ) \| Y_{k + 1}^{( \ell )} \| = 4 \cstmseight \| Y_{k + 1}^{( \ell )} \|\,,\quad \cstmseight = \cstmtwo + 2 \cstmfive\,.
		\end{align}
		In representation \eqref{eq:iter_error_init}, it remains to estimate the term $\T_{k,\ell}^{-1} \L \T_{k,\ell - 1}^{-1} \L^{1/2} g_k$. By applying estimates \eqref{eq:operator_ineq} and \eqref{eq:unif_bound_lu}, the following inequality holds:
		\begin{align}\label{eq:est_m5}
			\| \T_{k,\ell}^{-1} \L \T_{k,\ell - 1}^{-1} \L^{1/2} g_k \| &= \| ( \T_{k,\ell}^{-1} \L^{1/2} ) ( \T_{k,\ell - 1}^{-1} \L g_k ) \| \leq \| \T_{k,\ell}^{-1} \L^{1/2} \| \| \T_{k,\ell - 1}^{-1} \L g_k \|\nonumber \\
			&\leq \tau^{-1}\tfrac{1}{\sqrt{2 \alpha_{\min}}} ( \tau^2\tfrac12 \| \T_{k,\ell - 1}^{-1} \L \| \| f_k \| + \| \T_{k,\ell - 1}^{-1} \| \| \L u_k \| )\nonumber \\
			&\leq \tau^{-1}\tfrac{1}{\sqrt{2 \alpha_{\min}}} ( \tfrac{1}{\alpha_{\min}} \| f_k \| + \| \L u_k \| )\nonumber \\
			&\leq \tau^{-1} \cstmseven\,,\quad \cstmseven = \tfrac{1}{\sqrt{2 \alpha_{\min}}} ( \tfrac{1}{\alpha_{\min}} \max_{1 \leq i \leq n} \| f_i \| + \cstmfour )\,.
		\end{align}
		From \eqref{eq:iter_error_init}, and taking into account inequalities \eqref{eq:bracket_term} and \eqref{eq:est_m5}, we deduce that:
		\begin{equation}\label{eq:final_iter}
			\| Y_{k + 1}^{( \ell + 1 )} \| \leq \tau \cstmseight \cstmseven \beta_{\max} \| Y_{k + 1}^{( \ell )} \|\,\,.
		\end{equation}
		\noindent The constants $\cstmtwo$, $\cstmfour$, $\cstmfive$, $\cstmseven$ and
		$\cstmseight = \cstmtwo + 2 \cstmfive$ entering \eqref{eq:final_iter} are determined by the
		data of problem \eqref{eq:main_kirchhoff}--\eqref{eq:initial_conds} alone, and in particular
		by none of $\tau$, the time level $k$ and the number $n$ of time steps. Setting
		\begin{equation}\label{eq:tau_zero_iteration}
			\tau_0 \coloneqq ( \cstmseight \cstmseven \beta_{\max} )^{-1}\,,
		\end{equation}
		\noindent \eqref{eq:final_iter} states that consecutive iterates contract with factor 
		$\rho = \tau / \tau_0 < 1$ whenever $0 < \tau < \tau_0$, that is,
        \begin{equation*}
            \| w_{k + 1}^{( \ell + 2 )} - w_{k + 1}^{( \ell + 1 )} \| \leq \rho \| w_{k + 1}^{( \ell + 1 )} - w_{k + 1}^{( \ell )} \| \leq \rho^{\ell+1} \| w_{k + 1}^{( 1 )} - w_{k + 1}^{( 0 )} \|\,, \quad \ell = 0,1,\ldots\,,
        \end{equation*}
        with $w_{k + 1}^{( \ell )} = \L^{1/2} v_{k + 1}^{( \ell )}$. Summing the geometric series shows
        in the standard way that $\{ w_{k+1}^{(\ell)} \}_\ell$ is a Cauchy sequence in $L^2$, hence
        convergent; and since $\L$ is self-adjoint and positive definite, $\{ v_{k+1}^{(\ell)} \}_\ell$
        is Cauchy as well, so that there is $v^{\ast}$ with $v_{k + 1}^{( \ell )} \to v^{\ast}$ and
        $\L^{1/2} v_{k + 1}^{( \ell )} \to \L^{1/2} v^{\ast}$ as $\ell \to \infty$.

        We now show that the vector $v^{\ast}$ is a solution of the nonlinear equation \eqref{eq:nonlinear_equation_v}. To this end, the convergence of the vector sequences $v_{k + 1}^{( \ell )}$ and $\L^{1/2} v_{k + 1}^{( \ell )}$ alone is insufficient; it is additionally necessary to establish the convergence of the sequence $\L v_{k + 1}^{( \ell )}$. From equation \eqref{eq:nonlinear_equation_v}, we have:
        \begin{align*}
            &\| \L v_{k + 1}^{( \ell + p + 1 )} - \L v_{k + 1}^{( \ell + 1 )} \| \leq \tau^{-2}\tfrac{2}{\alpha_{\min} } \| v_{k + 1}^{( \ell + p + 1 )} - v_{k + 1}^{( \ell + 1 )} \| \\
            &\qquad +\tau^{-2}\tfrac{2 \beta_{\max}}{\alpha_{\min}^2 } ( \| \L^{1/2} u_{k + 1}^{( \ell + p )} \| + \| \L^{1/2} u_{k + 1}^{( \ell )} \| ) \| \L^{1/2} v_{k + 1}^{( \ell + p )} - \L^{1/2} v_{k + 1}^{( \ell )} \| ( \| g_k \| + \| v_{k + 1}^{( \ell + 1 )} \| )\,.
        \end{align*}
        It follows from this inequality that $\L v_{k + 1}^{( \ell )}$ forms a Cauchy sequence. From this, since $\L$ is a self-adjoint positive definite operator, it follows that the sequence $\L v_{k + 1}^{( \ell )}$ converges to $\L v^{\ast}$ as $\ell \to \infty$. Passing to the limit as $\ell \to \infty$ in \eqref{eq:oper_iter}, we obtain $\T_k v^{\ast} = g_k$, with $u_{k + 1} = 2 v^{\ast} - u_{k - 1}$. By this, we have shown the solvability of the nonlinear equation \eqref{eq:nonlinear_equation_v}.

        We now establish the uniqueness of the solution to equation \eqref{eq:operator_equation_v}. Assume that equation \eqref{eq:operator_equation_v} admits another solution $\tilde{v}_{k + 1} \neq v_{k + 1}$, satisfying $\widetilde{\T}_k \tilde{v}_{k + 1} = g_k$, where the operator $\widetilde{\T}_k$ is obtained from $\T_k$ by replacing $u_{k + 1}$ with $\tilde{u}_{k + 1} = 2 \tilde{v}_{k + 1} - u_{k - 1}$. Both $v_{k+1}$ and $\tilde{v}_{k+1}$ satisfy an equation of the form \eqref{eq:operator_equation_v}, so the computation \eqref{eq:vec_w}--\eqref{eq:unif_iter_vec} applies to each of them and yields $\| \L^{1/2} v_{k+1} \|, \| \L^{1/2} \tilde{v}_{k+1} \| \leq \cstmfive$, whence $\| \L^{1/2} u_{k+1} \|, \| \L^{1/2} \tilde{u}_{k+1} \| \leq 2 \cstmfive + \cstmtwo = \cstmseight$. Repeating the reasoning used to establish inequality \eqref{eq:final_iter}, we derive:
        \begin{equation}\label{eq:uniq_sol_first}
            \| \L^{1/2} v_{k + 1} - \L^{1/2} \tilde{v}_{k + 1} \| \leq \tilde{\rho} \| \L^{1/2} v_{k + 1} - \L^{1/2} \tilde{v}_{k + 1} \|\,, \quad \tilde{\rho} = \tau \cstmseight \cstmseven \beta_{\max} {}= \frac{\tau}{\tau_0}\,.
        \end{equation}
        If $\tau$ is chosen so that $0 < \tau < \tau_0$, then $\tilde{\rho} < 1$ and relation \eqref{eq:uniq_sol_first} implies that $\L^{1/2} \left( v_{k + 1} - \tilde{v}_{k + 1} \right) = 0$. Hence, since $\L$ is a self-adjoint positive definite operator, it follows that $v_{k + 1} = \tilde{v}_{k + 1}$.

        \subsection{Geometric Convergence of the Iterative Method}\label{subsec:iteration_convergence}

        \noindent Now that the solution $v_{k+1}$ of \eqref{eq:operator_equation_v} is known
        to exist and to be unique, we may measure the iteration error against it. Set
        $Z_{k+1}^{(\ell+1)} = w_{k+1} - w_{k+1}^{(\ell+1)}$ with $w_{k+1} = \L^{1/2} v_{k+1}$. From
        \eqref{eq:operator_equation_v} we get $w_{k + 1} = \T_k^{-1} \L^{1/2} g_k$, and
        subtracting this from \eqref{eq:iter_w} reproduces the computation
        \eqref{eq:iter_error_init}--\eqref{eq:final_iter} with $\T_{k,\ell-1}$ replaced by $\T_k$
        and $u_{k+1}^{(\ell-1)}$ by $u_{k+1}$. Nothing in that computation depends on which member
        of the family the second operator is: \eqref{eq:est_m5} uses only
        \eqref{eq:operator_ineq} and \eqref{eq:unif_bound_lu}, both valid for every operator
        $\T_{k,\cdot}$ and for $\T_k$, and the difference of squares is again bounded by
        $4 \cstmseight \| Z_{k + 1}^{( \ell )} \|$, since
        $\| \L^{1/2} u_{k+1}^{(\ell)} \|, \| \L^{1/2} u_{k+1} \| \leq \cstmseight$ --- for the
        iterate by \eqref{eq:unif_iter_vec} with $u_{k+1}^{(\ell)} = 2 v_{k+1}^{(\ell)} - u_{k-1}$,
        and for the solution by the bound $2 \cstmfive + \cstmtwo = \cstmseight$ derived in the
        uniqueness argument from \eqref{eq:operator_equation_v} itself. The \apriori{} bound
        \eqref{eq:unif_bound_half_lu} must \emph{not} be invoked for $u_{k+1}$ here: it is a
        property of the time-discrete solution sequence, and $u_{k+1}$ is precisely the object
        whose existence is at issue. Hence
        \begin{equation}\label{eq:final_iter_error}
            \| Z_{k + 1}^{( \ell + 1 )} \| \leq \tau \cstmseight \cstmseven \beta_{\max} \| Z_{k + 1}^{( \ell )} \| = \frac{\tau}{\tau_0} \| Z_{k + 1}^{( \ell )} \|\,,
        \end{equation}
        \noindent with the same $\tau_0 = ( \cstmseight \cstmseven \beta_{\max} )^{-1}$ as
        in~\eqref{eq:tau_zero_iteration}. Estimate~\eqref{eq:final_iter_error} is the assertion
        that the iterates converge to the solution at the same geometric rate at which they
        contract among themselves. The foregoing
        discussion establishes the following theorem.

		\begin{theorem}[One-step solvability and geometric convergence]\label{thm:iteration_convergence}
			Fix a time level $k$ and assume that the preceding iterates $u_{k-1}, u_k$ exist and
			satisfy the bounds~\eqref{eq:unif_bound_half_lu} and \eqref{eq:unif_bound_lu}, that is
			$\| \L^{1/2} u_j \| \leq \cstmtwo$ for $j = k-1,k$ and $\| \L u_k \| \leq \cstmfour$. Let
			$\tau_0 = \bigl( \cstmseight \cstmseven \beta_{\max} \bigr)^{-1}$ be as
			in~\eqref{eq:tau_zero_iteration}. It depends only on
			the data of problem \eqref{eq:main_kirchhoff}--\eqref{eq:initial_conds} through
			$\cstmtwo$, $\cstmfour$, $\alpha_{\min}$, $\beta_{\max}$, $T$ and $\max_i \| f_i \|$, and neither on the
			time level $k$, nor on the number $n$ of time steps, nor on $\tau$ itself. Then, for
			every $0 < \tau < \tau_0$, the nonlinear
			equation~\eqref{eq:nonlinear_equation_v} possesses exactly one solution $v_{k+1}$,
			and the iterative process \eqref{eq:oper_iter}
			converges to it at a geometric rate, with contraction factor $\rho = \tau / \tau_0 < 1$.
		\end{theorem}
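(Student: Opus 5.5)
The plan is to assemble the argument of \Cref{sec:solvability} into a fixed-point proof for the map $\Phi\colon w \mapsto \T(w)^{-1}\L^{1/2} g_k$. Here $\T(w)$ denotes the operator $\T_{k,\ell}$ with $u_{k+1}^{(\ell)}$ replaced by $2\L^{-1/2}w - u_{k-1}$. In this notation the iteration \eqref{eq:oper_iter} reads $w_{k+1}^{(\ell+1)} = \Phi(w_{k+1}^{(\ell)})$, and a fixed point of $\Phi$ corresponds to a solution of \eqref{eq:nonlinear_equation_v}.

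\textbf{Step 1 (invariant ball).} From $\| \T^{-1}\| \leq 1$ and $\|\T^{-1}\L^{1/2}\| \leq \tau^{-1}(2\alpha_{\min})^{-1/2}$, i.e.\ \eqref{eq:operator_ineq}, together with $\|\L^{1/2}u_k\|\leq \cstmtwo$, I would show that $\Phi$ maps all of $D(\L^{1/2})$ into the ball $B=\{\|w\|\leq \cstmfive\}$, as in \eqref{eq:unif_iter_vec}. The initial iterate also lies in $B$. Consequently every iterate, and every fixed point, satisfies $\|\L^{1/2} u_{k+1}\|\leq \cstmseight$. Only the hypotheses on $u_{k-1},u_k$ are used here, never a bound on the unknown $u_{k+1}$.

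\textbf{Step 2 (Lipschitz estimate).} For $w,\tilde w \in B$ I would use the resolvent identity $\Phi(w)-\Phi(\tilde w) = \T(w)^{-1}(\T(\tilde w)-\T(w))\T(\tilde w)^{-1}\L^{1/2}g_k$, as in \eqref{eq:iter_error_init}. The scalar difference of squared norms is bounded by $4\cstmseight\|w-\tilde w\|$, as in \eqref{eq:bracket_term}. The operator factor is bounded by $\tau^{-1}\cstmseven$, as in \eqref{eq:est_m5}, using $\|\L u_k\|\leq \cstmfour$ and $\|\T^{-1}\L\|\leq 2/(\tau^2\alpha_{\min})$. Multiplying by the prefactor $\tau^2\beta_k/4$ gives $\|\Phi(w)-\Phi(\tilde w)\|\leq (\tau/\tau_0)\|w-\tilde w\|$. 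The constants $\cstmfive$, $\cstmseven$, $\cstmseight$ are $\tau$-, $k$- and $n$-free because every factor $\tau$ is replaced by $T$, so $\tau_0$ depends only on the listed data.

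\textbf{Step 3 (Banach fixed point).} For $\tau<\tau_0$, $\Phi$ is a contraction of the closed set $B$ in $L^2(\domain)$, so it has a unique fixed point $w^\ast$ in $B$. The iterates converge to $w^\ast$ with $\|w^\ast - w^{(\ell)}\|\leq \rho^\ell\|w^\ast-w^{(0)}\|$, which gives the geometric rate. Uniqueness among all solutions, not only those in $B$, follows because by Step 1 every solution lies in $B$.

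\textbf{Main obstacle: identifying the limit as a genuine solution.} The limit $w^\ast$ lives only in $L^2$. I must check that $v^\ast = \L^{-1/2}w^\ast$ lies in $D(\L)$ and satisfies $\T_k v^\ast = g_k$. I would avoid passing to the limit in $\L v^{(\ell)}$ directly and instead use the fixed-point relation itself. Since $w^\ast = \T(w^\ast)^{-1}\L^{1/2}g_k$ and $\T(w^\ast)^{-1}$ maps $L^2$ into $D(\L)$, we have $v^\ast = \T(w^\ast)^{-1} g_k \in D(\L)$. Applying $\T(w^\ast)$ then recovers \eqref{eq:operator_equation_v} with $u_{k+1} = 2v^\ast - u_{k-1}$. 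This requires commuting $\L^{-1/2}$ with $\T(w^\ast)^{-1}$, which holds by the spectral calculus since both are functions of $\L$. The alternative is the Cauchy estimate for $\L v^{(\ell)}$ given before the theorem together with closedness of $\L$.
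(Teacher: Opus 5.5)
Your proposal is correct and follows the paper's argument: the same invariant bound \eqref{eq:unif_iter_vec}, the same resolvent-identity contraction estimate \eqref{eq:iter_error_init}--\eqref{eq:final_iter}, and the same observation that every solution obeys the $M_5$ bound, which gives uniqueness. You package these as Banach's fixed-point theorem on the ball $\{\|w\| \leq M_5\}$, so that existence, uniqueness and the rate $\|w^\ast - w^{(\ell)}\| \leq \rho^\ell \|w^\ast - w^{(0)}\|$ come out at once. The one real difference is how you identify the limit: the paper shows that $\L v_{k+1}^{(\ell)}$ is Cauchy and passes to the limit in \eqref{eq:oper_iter}, whereas you read off $v^\ast = \T(w^\ast)^{-1} g_k \in D(\L)$ directly from the fixed-point relation, using that functions of $\L$ commute; this is shorter and equally valid.
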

		\begin{remark}[What this theorem does and does not give]\label[remark]{rem:one_step_only}
			\thmref{thm:iteration_convergence} concerns \emph{one} time level, and does not by
			itself give the whole discrete trajectory $\{ u_k \}_{k=0}^{n}$: the bounds it assumes
			are exactly what has to be propagated from one level to the next.
			\lemref{prop:lemma1_nonlinear} and \lemref{prop:lemma2_nonlinear} supply $\cstmtwo$
			and $\cstmfour$, and although they are stated for a complete solution sequence, their
			proofs are index-local: the energy identities used to bound $E_k$ and $\hat{E}_k$
			invoke the scheme only at the levels below $k$. Alternating them with the present
			theorem would therefore already construct a trajectory step by step, on the local
			interval $\left[ 0,\overline{T} \right]$ of \lemref{prop:lemma2_nonlinear} and under
			the data regularity~\eqref{eq:lemma2_data_regularity} that lemma requires. The companion paper~\cite{KirchhoffFEM} derives existence and uniqueness for the fully-discrete solution on the whole time interval $[0,T]$, where the solution is bounded. 
		\end{remark}

\section{Numerical Experiments}\label{sec:numerics}
\noindent We verify the four assertions of this paper that are open to direct numerical test:
that the locally linear scheme~\eqref{eq:linear_semidiscrete} conserves $\widetilde{E}_{1,k}$
and the nonlinear scheme~\eqref{eq:nonlinear_semidiscrete} conserves $\widetilde{E}_{2,k}$
exactly; that $E_k^{\ast}$ approximates the continuous energy to order~$\tau^2$
(\corref{cor:energy_approximation}); that both schemes converge to second order in time
(\thmref{prop:conver_thm_nonlinear}), including when $\alpha$ and $\beta$ genuinely depend on
$t$; and that the fixed-point iteration~\eqref{eq:oper_iter} converges geometrically
(\thmref{thm:iteration_convergence}).

\noindent\textbf{A spatially exact test problem.} All four statements concern the temporal
discretization alone, so we choose a setting in which the spatial discretization contributes
no error whatsoever. On the square $\domain = (0,\ell_\domain)^2$ the function
\begin{equation}\label{eq:eigenmode}
  \varphi
  = \sin \left( \tfrac{\lambda \pi x}{\ell_\domain} \right)
    \sin \left( \tfrac{\lambda \pi y}{\ell_\domain} \right)\,,
  \qquad \lambda \in \mathbb{N}\,,
\end{equation}
is an eigenfunction of the Dirichlet Laplacian, $\L \varphi = \tilde{\lambda} \varphi$ with
$\tilde{\lambda} = 2 ( \lambda \pi / \ell_\domain )^2$. Both schemes involve $u_{k\pm1}$ only
through $\L u_{k\pm1}$ and through the scalar $\dint{| \nabla u_{k\pm1} |^2}$, so each maps the
one-dimensional space $\operatorname{span}\{\varphi\}$ into itself. If the initial data
\eqref{eq:starting_vec1} lie in that span, we take
$u_0 = a \varphi$ and $v_0 = b \varphi$, then the entire discrete evolution does,
and writing $u_k = c_k \varphi$ reduces each scheme \emph{exactly} to a scalar three-layer
recursion for $c_k$. With $m_\varphi = \| \varphi \|^2$ and $G = \dint{| \nabla \varphi |^2} = \tilde{\lambda} m_\varphi$,
the homogeneous problem~\eqref{eq:homog_kirchhoff} becomes the Duffing-type equation
\begin{equation}\label{eq:scalar_reduction}
  c^{\prime\prime} ( t )
  + \tilde{\lambda} \left( \alpha + \beta G \, c^2 ( t ) \right) c ( t ) = 0\,,
  \qquad c ( 0 ) = a\,,\quad c^{\prime} ( 0 ) = b\,,
\end{equation}
whose conserved energy is $E = \tfrac12 m_\varphi \, (c^{\prime})^2 + \alpha V + \beta V^2$ with
$V = \tfrac12 G c^2$, in agreement with~\eqref{eq:total_energy}. The locally linear scheme
reduces to $c_{k+1} = 2 c_k / ( 1 + A_k ) - c_{k-1}$ with
$A_k = \tfrac12 \tilde{\lambda} \tau^2 ( \alpha + \beta G c_k^2 )$, and the nonlinear scheme to a scalar
cubic equation for $c_{k+1}$, which we solve by Newton's method to a tolerance of $10^{-15}$.

\noindent This reduction is not a simplification of the problem but a restriction of the data:
the nonlinearity is fully active, since $\dint{| \nabla u_k |^2} = G c_k^2$ varies over the
evolution, and no term of either scheme is switched off. In the reported runs $\ell_\domain = 1$, $\lambda = 1$, $\alpha = \beta = 1$, $a = 1$
and $b = 0$, so that $\tilde{\lambda} = 2\pi^2$, $m_\varphi = \tfrac14$, $G = \tfrac{\pi^2}{2}$ and
$E \equiv E(0) \approx 8.5554693$; the solution is periodic with period $\approx 0.661$,
which is shorter than the small-amplitude period $2\pi/\sqrt{\tilde{\lambda}} \approx 1.414$ because the
nonlinearity stiffens the restoring force. All computations are carried out in double
precision.

\noindent\textbf{Conservation of the discrete invariants.}
\Cref{fg:energy_drift} reports the relative deviation
$| \widetilde{E}_{i,k} - \widetilde{E}_{i,1} | / | \widetilde{E}_{i,1} |$ over
$200\,000$ time steps on $[0,100]$, that is over roughly $150$ periods, at
$\tau = 5 \cdot 10^{-4}$. Both invariants remain constant up to rounding errors that accumulate over
this many operations in double precision. The decisive observation is what happens under
refinement. Repeating the experiment on the same interval with an increased number of time steps increases rather than decreases the considered error. This is precisely the
signature of an identity rather than of an approximation.

\begin{figure}[!ht]
  \centering
  \includestandalone{energy_drift}
  \caption{Relative drift of the discrete invariants $\widetilde{E}_{1,k}$ (locally linear
  scheme) and $\widetilde{E}_{2,k}$ (nonlinear scheme) over $200\,000$ time steps on
  $[0,100]$, at $\tau = 5 \cdot 10^{-4}$, for $\alpha = \beta = 1$ and $\lambda = 1$. Both
  remain constant to within the accumulation of rounding errors.}\label{fg:energy_drift}
\end{figure}

\begin{figure}[!ht]
  \centering
  \includestandalone{energy_order}
  \caption{Left: maximal difference between the continuous energy $E(t_k)$ and its discrete
  approximation $E_k^{\ast}$ on $[0,10]$, against the time step size. Right: relative error of
  the two schemes at $T = 10$, measured against a high-accuracy reference solution. Both
  quantities decrease at the predicted order~$\tau^2$.}\label{fg:energy_order}
\end{figure}

\noindent\textbf{Approximation of the continuous energy and temporal order.} The left panel
of \Cref{fg:energy_order} reports $\max_k | E(t_k) - E_k^{\ast} |$ on $[0,10]$ against $\tau = 10 \cdot 2^{-j}$, $j = 10, \dots, 16$, confirming
\corref{cor:energy_approximation}. The right panel reports the relative error of $u_n$ at
$T = 10$ for both schemes, measured against a reference solution
of~\eqref{eq:scalar_reduction} computed by the classical fourth-order Runge--Kutta method with
$4 \cdot 10^{6}$ steps. Both schemes attain the second-order
convergence in time of \thmref{prop:conver_thm_nonlinear}.

\noindent It is worth recording that on this problem the locally linear scheme is the better of
the two on both counts; its error is smaller at every step
size tested, and it requires no nonlinear solve at all, since its coefficient $q_k$ is
evaluated at the known level $u_k$. We record this because it is what the experiment shows, and not as an argument for
preferring one scheme over the other; a single test problem fixes an error constant, not a
method. 

\FloatBarrier

\noindent\textbf{Geometric convergence of the fixed-point iteration.} 
\noindent \Cref{tab:fixed_point} reports the
first-step contraction ratio, \ie{}, the ratio of the second increment to the first, maximised over
the time levels, and the largest number of iterations needed to reduce the increment below
$10^{-14}$. The iteration converges geometrically at every
time level and from the analysed starting iterate $v_{k+1}^{(0)} = (u_k + u_{k-1})/2$, as
\thmref{thm:iteration_convergence} asserts. The observed factor, however, falls by almost
exactly a factor of four whenever $\tau$ is halved, so that in this example the iteration
contracts like $\bigO(\tau^2)$ rather than like the $\rho = \tau/\tau_0$ the theorem guarantees.
There is no contradiction since $\tau/\tau_0$ is an upper bound, but it shows that the bound
is not sharp here, and the reason is instructive. The single factor $\tau^{-1}$ that turns the
$\tau^2$ carried by $\T_{k,\ell}$ into the $\tau$ of \eqref{eq:final_iter} comes from the operator
bound $\| \T_{k,\ell}^{-1} \L^{1/2} \| \leq ( \tau \sqrt{2 \alpha_{\min}} )^{-1}$
of~\eqref{eq:operator_ineq}. That bound is sharp, but it is attained at
$\lambda = 2 ( \tau^2 q )^{-1}$, that is only in the high-frequency limit
$\lambda \sim \tau^{-2}$. For a \emph{fixed} eigenvalue $\lambda$ one has
$\| \T_{k,m}^{-1} \L^{1/2} \varphi \| = \sqrt{\lambda} \, \| \varphi \| ( 1 + \tfrac{\tau^2}{2}
q \lambda )^{-1} = \bigO(1)$ as $\tau \to 0$. The present experiment excites one low mode, so
the $\tau^{-1}$ never materialises, and the observed rate is the full $\bigO(\tau^2)$ of the
coefficient increment. A solution with content at frequencies $\lambda \sim \tau^{-2}$ would
approach the proved rate; the theorem must cover that case, and is therefore not improvable
without a frequency-dependent hypothesis.

\begin{table}[!ht]
  \centering
  \begin{tabular}{lccccccc}
    \toprule
    $j$
      & $7$ & $8$ & $9$ & $10$ & $11$ & $12$ & $13$ \\
    \midrule
    $\max_k \rho_k$
      & $3.62$e$-1$ & $1.24$e$-1$ & $3.53$e$-2$ & $9.16$e$-3$
      & $2.31$e$-3$ & $5.80$e$-4$ & $1.45$e$-4$ \\
    iterations
      & $30$ & $15$ & $10$ & $7$ & $6$ & $5$ & $4$ \\
    \bottomrule
  \end{tabular}
  \caption{Geometric convergence of the analysed fixed-point
  iteration~\eqref{eq:oper_iter}, at $\tau = 10 \cdot 2^{-j}$. Here $\rho_k$ is the
  \emph{first-step contraction ratio}
  $\| \L^{1/2} ( v_{k+1}^{(2)} - v_{k+1}^{(1)} ) \| /
   \| \L^{1/2} ( v_{k+1}^{(1)} - v_{k+1}^{(0)} ) \|$ at time level $k$, not a maximum over the
  iteration index. ``iterations'' is the largest number of steps needed at any time level to
  bring the increment below $10^{-14}$. The ratio decreases by a factor of about four per
  halving of $\tau$.}\label{tab:fixed_point}
\end{table}

\FloatBarrier

\noindent\textbf{Time-dependent coefficients.} The convergence result \thmref{prop:conver_thm_nonlinear} retains the time
dependence, and we test it directly. The reduction of the preceding paragraphs carries over
unchanged to the inhomogeneous nonautonomous problem: prescribing a manufactured solution
$u ( t ) = g(t) \varphi$ with $\varphi$ as in~\eqref{eq:eigenmode} and inserting it
into~\eqref{eq:main_kirchhoff} gives the source term $f = F(t) \varphi$ with
\begin{equation}\label{eq:manufactured_source}
  F ( t )
  = g^{\prime\prime} ( t )
  + \tilde{\lambda} \left( \alpha ( t )
    + \beta ( t ) G \, g^2 ( t ) \right) g ( t )\,,
\end{equation}
so that the exact coefficient is $g$ itself and no reference integrator is needed. We
consider 
\begin{alignat}{2}
  \alpha ( t ) &= 1 + 4t\,, &\quad
  \beta ( t ) &= 2 - 4t\,,\quad g ( t ) = e^{\pi t}\,,\quad
  T = \tfrac14\,,\label{eq:nonautonomous_B}
\end{alignat}
with $\lambda = 1$. The coefficients satisfy $\alpha \geq 1$ and $\beta \geq 1$ on the
respective intervals, as required. This example lets the solution grow
exponentially, which is the regime in which the locality in time of the analysis makes itself
felt, and its horizon is chosen accordingly. \Cref{fg:nonautonomous_order} reports the relative
error at $T$. Both schemes attain the optimal second-order rate. 

\begin{figure}[!ht]
  \centering
  \includestandalone{nonautonomous_order}
  \caption{Relative error at the final time for the nonautonomous test
  case~\eqref{eq:nonautonomous_B}, \ie{}, $\alpha = 1+4t$, \ $\beta = 2-4t$, for both three-layer
  schemes. }\label{fg:nonautonomous_order}
\end{figure}

\noindent The script producing all data sets is
\texttt{/examples/run\_energy\_conservation.py} in the accompanying repository. It
depends only on the Python standard library, since by the reduction above no finite element
computation is required. Numerical experiments for the fully discrete method, in which the
spatial error is the object of study, are reported in the companion
paper~\cite{KirchhoffFEM}.

\section{Conclusion}\label{sec:conclusion}

\noindent We have studied conservative temporal discretizations of a two-dimensional
Kirchhoff-type integro-differential equation with time-dependent coefficients. The two
symmetric three-layer schemes of Crank--Nicolson type analysed preserve respective discrete energies
identically and at every time level. Neither conserves the continuous energy $E(t)$, which is
instead approximated to $\bigO(\tau^2)$ by the separate quantity $E_k^{\ast}$. For the nonlinear
scheme, we established uniform bounds on $(u_k - u_{k-1})/\tau$ and $\L^{1/2} u_k$ by working
directly with the discrete energies, and local uniform bounds on
$\L^{1/2}(u_k-u_{k-1})/\tau$ and $\L u_k$ and from them local second-order convergence in time, for the solution and for the
central-difference approximation of its first time derivative. The associated iteration to the nonlinear equation at each
time level converges to the unique solution geometrically
for $\tau$ small enough (\thmref{thm:iteration_convergence}). Combined with the index-local
\apriori{} bounds, this constructs the trajectory stepwise on the interval those bounds cover. The two conservation identities, the approximation of $E ( t_k )$ by $E_k^{\ast}$, the second order in time, and the geometric rate of the iteration are exhibited in \Cref{sec:numerics}.

\noindent Several questions remain open. A single result with existence and uniqueness of a discrete solution for the whole time interval $[0,T]$ until the solution blows up does not exist to our knowledge. Finally, the energy conservation established here for the homogeneous problem with
constant coefficients has no counterpart yet for time-dependent $\alpha$ and $\beta$, where one
expects a controlled drift rather than exact conservation. 
\noindent The companion paper~\cite{KirchhoffFEM} combines the time stepping analysed here
with a conforming finite element method and a Picard iteration for the nonlinearity, and
establishes convergence of the resulting fully discrete scheme.

%% file: figures/energy_drift.tex
\begin{tikzpicture}

\definecolor{darkgray176}{RGB}{176,176,176}
\definecolor{darkcoral}{rgb}{0.8, 0.36, 0.27}
\definecolor{amber}{rgb}{1.0, 0.75, 0.0}

\begin{axis}[
width=14.5cm,
height=5.cm,
axis line style=very thick,
log basis y={10},
tick align=center,
tick pos=left,
legend columns=1,
legend style={/tikz/column 2/.style={column sep=7pt,}, font=\large},
legend pos=south east,
xlabel={{\Large time $t$}},
x label style={at={(axis description cs:0.5,-0.1)},anchor=north},
x grid style={darkgray176},
xmin=0, xmax=100,
xtick style={color=black},
ylabel={{\Large $| \widetilde{E}_{i,k} - \widetilde{E}_{i,1} | \, / \, | \widetilde{E}_{i,1} |$}},
y label style={at={(axis description cs:-0.09,0.5)},anchor=south},
y grid style={darkgray176},
ymin=1e-16, ymax=1e-9,
ymode=log,
ytick style={color=black},
ytick={1e-16,1e-14,1e-12,1e-10},
yticklabels={
  \(\displaystyle {10^{-16}}\),
  \(\displaystyle {10^{-14}}\),
  \(\displaystyle {10^{-12}}\),
  \(\displaystyle {10^{-10}}\),
}
]

\addplot [amber, ultra thick, solid] table[x=t,y=drift_E1,col sep=comma]
  {./figures/energy_drift.csv};
\addlegendentry{$\widetilde{E}_{1,k}$}

\addplot [darkcoral, ultra thick, solid] table[x=t,y=drift_E2,col sep=comma]
  {./figures/energy_drift.csv};
\addlegendentry{$\widetilde{E}_{2,k}$}

\addplot [black, thick, dashed, no markers] coordinates {(0,2.2e-16) (100,2.2e-16)};
\addlegendentry{$\epsilon$}

\end{axis}

\end{tikzpicture}

%% file: figures/energy_order.tex
\begin{tikzpicture}

\definecolor{darkgray176}{RGB}{176,176,176}
\definecolor{darkcoral}{rgb}{0.8, 0.36, 0.27}
\definecolor{amber}{rgb}{1.0, 0.75, 0.0}

% ---------------------------------------------------------------- left panel
\begin{scope}[shift={(0cm,0cm)}]
\begin{axis}[
title={{\Large $\max_k | E(t_k) - E_k^{\ast} |$}},
width=7.8cm,
height=5.4cm,
axis line style=very thick,
log basis x={10},
log basis y={10},
tick align=center,
tick pos=left,
legend style={font=\large},
legend pos=north west,
xlabel={{\Large time step size $\tau$}},
x label style={at={(axis description cs:0.5,-0.15)},anchor=north},
x grid style={darkgray176},
xmin=1e-4, xmax=1.5e-2,
xmode=log,
xtick style={color=black},
y grid style={darkgray176},
ymin=1e-5, ymax=2e-1,
ymode=log,
ytick style={color=black},
]

\addplot [amber, ultra thick, solid, mark=*, mark size=3,
          every mark/.append style={fill=white,solid}]
  table[x=tau,y=err,col sep=comma]{./figures/energy_approximation.csv};
\addlegendentry{nonlinear}

\addplot [black, very thick, dashed, no markers] coordinates {
  (9.765625e-03, 6.0e-2) (1.52587890625e-04, 1.46484375e-05)
};
\addlegendentry{$\mathcal{O}(\tau^2)$}

\end{axis}
\end{scope}

% --------------------------------------------------------------- right panel
\begin{scope}[shift={(7.4cm,0cm)}]
\begin{axis}[
title={{\Large rel. error of $u_n$ at $T=10$}},
width=7.8cm,
height=5.4cm,
axis line style=very thick,
log basis x={10},
log basis y={10},
tick align=center,
tick pos=left,
legend style={font=\large},
legend pos=north west,
xlabel={{\Large time step size $\tau$}},
x label style={at={(axis description cs:0.5,-0.15)},anchor=north},
x grid style={darkgray176},
xmin=1e-4, xmax=1.5e-2,
xmode=log,
xtick style={color=black},
y grid style={darkgray176},
ymin=2e-5, ymax=1e0,
ymode=log,
ytick style={color=black},
]

\addplot [darkcoral, ultra thick, dashed, mark=square*, mark size=3,
          every mark/.append style={fill=white,solid}]
  table[x=tau,y=err_linear,col sep=comma]{./figures/temporal_order.csv};
\addlegendentry{linear}

\addplot [amber, ultra thick, solid, mark=*, mark size=3,
          every mark/.append style={fill=white,solid}]
  table[x=tau,y=err_nonlinear,col sep=comma]{./figures/temporal_order.csv};
\addlegendentry{nonlinear}

\addplot [black, very thick, dashed, no markers] coordinates {
  (9.765625e-03, 1.5e-1) (1.52587890625e-04, 3.662109375e-05)
};
\addlegendentry{$\mathcal{O}(\tau^2)$}

\end{axis}
\end{scope}

\end{tikzpicture}

%% file: figures/nonautonomous_order.tex
\begin{tikzpicture}

\definecolor{darkgray176}{RGB}{176,176,176}
\definecolor{darkcoral}{rgb}{0.8, 0.36, 0.27}
\definecolor{amber}{rgb}{1.0, 0.75, 0.0}

% ---------------------------------------------------------------- left panel
\begin{scope}[shift={(0cm,0cm)}]
\begin{axis}[
width=14.5cm,
height=5.4cm,
axis line style=very thick,
log basis x={10},
log basis y={10},
tick align=center,
tick pos=left,
legend style={font=\large},
legend pos=south east,
xlabel={{\Large time step size $\tau$}},
x label style={at={(axis description cs:0.5,-0.15)},anchor=north},
x grid style={darkgray176},
xmin=1e-5, xmax=1.5e-3,
xmode=log,
xtick style={color=black},
ylabel={{\Large relative error at $T$}},
y label style={at={(axis description cs:-0.09,0.5)},anchor=south},
y grid style={darkgray176},
ymin=2e-10, ymax=2e-5,
ymode=log,
ytick style={color=black},
]

\addplot [amber, ultra thick, solid, mark=*, mark size=3,
          every mark/.append style={fill=white,solid}]
  table[x=tau,y=err_linear,col sep=comma]{./figures/nonautonomous_B.csv};
\addlegendentry{locally linear}

\addplot [darkcoral, ultra thick, dashed, mark=square*, mark size=3,
          every mark/.append style={fill=white,solid}]
  table[x=tau,y=err_nonlinear,col sep=comma]{./figures/nonautonomous_B.csv};
\addlegendentry{nonlinear}

\addplot [black, very thick, dashed, no markers] coordinates {
  (9.765625e-04, 5.0e-6) (1.52587890625e-05, 1.220703125e-09)
};
\addlegendentry{$\mathcal{O}(\tau^2)$}

\end{axis}
\end{scope}

\end{tikzpicture}

%% file: common/backmatter.tex
% Shared back matter: acknowledgments, declarations, bibliography.
\section*{Declarations}
\ifblind
\noindent\textbf{Funding.} Funding information is withheld for double-blind review and
will be supplied with the camera-ready version.
\else
\noindent\textbf{Funding.} This work by A.~Rupp was supported by the Deutsche Forschungsgemeinschaft
(DFG, German Research Foundation) -- Project-ID 577175348. And in part supported by the Deutsche
Forschungsgemeinschaft under Germany's Excellence Strategy -- EXC-2047/2 -- 390685813, through
F.~Krumbiegel's and A.~Rupp's stay at the Hausdorff Research Institute for Mathematics. {Additionally, Z.~Vashakidze was supported by the Shota Rustaveli National Science Foundation of Georgia (SRNSFG) under Grant No. FR-25-215.} The authors declare no further funding for this work.
\fi

\noindent\textbf{Competing interests.} The authors declare that they have no competing
interests.

\noindent\textbf{Data and code availability.} \paperdataavailability
%

		% ===================== BIBLIOGRAPHY OUTPUT LOCATION ==========================
		% --- BIBLIOGRAPHY -----------------------------------------------------------
		% References section with visible heading + PDF bookmark
		% Requires: \usepackage{biblatex} (with \addbibresource{...}) and \usepackage{hyperref}
		
		% % === METHOD: Start bibliography on a clean page (auto oneside/twoside) ===
		% \makeatletter                                   % enable access to class internals (@ macros)
		% \if@twoside\cleardoublepage\else\clearpage\fi   % right-hand page for twoside, plain new page for oneside
		% \makeatother                                    % restore normal category code for @
		
		% --- METHOD: Ensure correct hyperlink anchor --------------------------------
		\phantomsection                % Ensure correct hyperlink anchor for references (used with hyperref)
		
		% === METHOD: Print Bibliography with visible heading =======================
		% BibLaTeX creates a section-like heading ("References") and prints entries.
		% With hyperref loaded, this normally adds one PDF bookmark automatically.
		\printbibliography[
		heading=bibliography,   % use biblatex’s built-in section-level heading
		title={References}      % visible heading text
		]                         % no manual bookmark needed in most setups
		
		% === METHOD: (Optional) Force-add a PDF bookmark ============================
		% Use ONLY if your class/template fails to produce the bookmark above.
		% Toggle the switch below to true when you need the manual bookmark.
		\newif\ifForceBibBookmark
		% \ForceBibBookmarkfalse    % default: off (prevents duplicate bookmarks)
		\ForceBibBookmarktrue     % turn on if class/template does not create bookmark
		
		\ifForceBibBookmark
		\phantomsection                  % uncomment if you need to adjust the anchor position
		\pdfbookmark[1]{References}{bib} % level-1 bookmark titled "References", anchor name "bib"
		\fi